	\newtheorem{theorem}{Theorem}[section]
\newtheorem{prop}[theorem]{Proposition}
\newtheorem{lemma}[theorem]{Lemma}
\newtheorem{cor}[theorem]{Corollary}
\newtheorem{qn}[theorem]{Question}
\newtheorem{defn}[theorem]{Definition}
\newtheorem{rmk}[theorem]{Remark}
\newtheorem{eg}[theorem]{Example}
\newtheorem{setup}[theorem]{Setup}
\newcommand{\propref}[1]{Proposition~\ref{#1}}
\newcommand{\dirac}{\textrm{Dirac}}
\newcommand{\G}{{\Gamma}}
\newcommand{\s}{{\Sigma}}
\newcommand{\natls}{{\mathbb N}}
\newcommand{\reals}{{\mathbb R}}
\newcommand\FF{{\mathcal F}}
\newcommand\GG{{\mathcal G}}
\newcommand\LL{{L}}
\newcommand\MM{{\mathcal M}}
\newcommand\PP{{\mathcal P}}
\newcommand\QQ{{\mathcal Q}}
\newcommand\SSS{{\mathcal S}}
\newcommand\MF{{\MM\FF}}
\newcommand\PMF{{\PP\kern-2pt\MM\FF}}
\newcommand\PML{{\PP\kern-2pt\MM\LL}}
\newcommand\ep{\epsilon}
\newcommand\Hyp{{\mathbf H}}
\newcommand\Z{{\mathbb Z}}
\newcommand\R{{\mathbb R}}
\newcommand\E{{\mathbb E}}
\newcommand{\sas}{{S\alpha S}}
\newcommand{\lag}{{\Lambda_G}}
\newcommand{\lagg}{{\Lambda_G^{(2)}}}
\newcommand{\laggr}{{\Lambda_G^{(2)}\times \R}}
\newcommand{\fex}{{f_{ex}}}
\newcommand{\fexr}{{f_{exr}}}
\renewcommand{\L}{{\Lambda}}
\newcommand\gesim{\succ}
\newcommand{\mups}{{\mu^{PS}}}
\newcommand{\mubm}{{\mu^{BM}}}
\newcommand{\mubms}{{\mu^{BMS}}}
\newcommand{\mubmr}{{\mu^{BMR}}}
\newcommand{\omrk}{{\Omega_{(r,k)}}}
\newcommand{\omrkm}{{\Omega_{(r-k,k)}}}
\newcommand{\sro}{{\s_r(o)}}
\newcommand{\iid}{{i.i.d.\ }}
\newcommand{\poin}{{Poincar\'e \,}}
\newcommand{\rosin}{{Rosi\'nski \,}}
\newcommand{\bbar}{\overline}
\newcommand\qcg{{QC(\Lambda)}}
\newcommand\convd{\stackrel{d}{\rightarrow}}
\newcommand\convp{\stackrel{p}{\rightarrow}}
\newcommand\eqd{\stackrel{d}{=}}
\numberwithin{equation}{section}
\begin{document}
	
	\title[Stable Fields and Extremal Cocycle Growth]{Stable Random Fields, Patterson-Sullivan measures and Extremal Cocycle Growth}
	
	\author{Jayadev S. Athreya}
	\address{Jayadev S. Athreya, Department of Mathematics, University of Washington, Box 354350 Seattle WA 98195-4350, USA}
	
	\email{jathreya@uw.edu}
	
	\author{Mahan Mj}
	\address{Mahan Mj, School of Mathematics, Tata Institute of Fundamental Research, 1 Homi Bhabha Road, Mumbai 400005, India}
	
	\email{mahan@math.tifr.res.in}
	\email{mahan.mj@gmail.com}
	
	\author{Parthanil Roy}
	\address{Parthanil Roy, Theoretical Statistics and Mathematics Unit, Indian Statistical Institute, 8th Mile, Mysore Road, RVCE Post, Bangalore 560059, India}
	
	\email{parthanil.roy@gmail.com}
	
	\thanks{J.S.A.~acknowledges the support of NSF CAREER grant DMS 1559860 and NSF grant DMS 2003528. M.M.~is partially supported by a DST J~C~Bose Fellowship, an endowment from the Infosys Foundation, and by the Department of Atomic Energy, Government of India, under project no.12-R\&D-TFR-5.01-0500. P.R.~is partially supported by a DST SwarnaJayanti Fellowship, and a SERB grant MTR/2017/000513.}
	\subjclass[2010]{20F65, 20F67, 60B15, 60J50 (Primary), 11K55, 20F69, 28A78, 28A80,37A, 57M (Secondary)}
	\keywords{Stable random field, extreme value theory, Rosi\'nski representation, Patterson-Sullivan measure, Bowen-Margulis measure, CAT(-1) space, higher rank symmetric space, Teichm\"uller space, geodesic flow, ergodicity and mixing.}

	\date{\today}

\begin{abstract}
We study extreme values of group-indexed stable random fields for discrete groups $G$ acting geometrically on spaces $X$ in the following cases: (1) $G$ acts properly discontinuously by isometries on a CAT(-1)  space $X$, (2) $G$ is a lattice in a higher rank Lie group, acting on a symmetric space $X$, and (3) $G$ is the mapping class group of a surface acting on its Teichm\"uller space. The connection between extreme values and the geometric action is mediated by the action of the group $G$ on its limit set equipped with the Patterson-Sullivan measure. Based on motivation from extreme value theory, we introduce an invariant of the action called  extremal cocycle growth which measures the distortion of measures on the boundary in comparison to the movement of points in the space $X$ and  show that its non-vanishing is equivalent to finiteness of the Bowen-Margulis measure for the associated unit tangent bundle $U(X/G)$ provided $X/G$ has non-arithmetic length spectrum. As a consequence, we establish a dichotomy for the  growth-rate of a partial maxima sequence of stationary symmetric $\alpha$-stable ($0 < \alpha < 2$) random fields indexed by groups acting on such spaces. We also establish analogous results for normal subgroups of free groups.
\end{abstract}

	\maketitle

\section{Introduction} Let $G$ be a discrete finitely generated group acting  and properly discontinuously by isometries on a  space $X$ in one of the following situations:
\begin{enumerate}
	\item $G$ acts properly discontinuously by isometries on a CAT(-1)  space $X$,
	\item $G$ is a lattice in a higher Lie group $\mathcal G$, acting on its symmetric space  $X$.
	\item $G$ is the mapping class group of a surface acting on its Teichm\"uller space.
\end{enumerate}

Let  $\Lambda_G \subset \partial X$ denote the limit set--the collection of accumulation points of an(y) orbit on the boundary $\partial X$.


The aim of this paper is to establish a connection between three perspectives on the action of $G$ on $\Lambda_G$ pertaining to three different themes  as mentioned below:
\begin{enumerate}
	\item maxima of stationary \emph{symmetric $\alpha$-stable} ($\sas$) random fields indexed by $G$  ({\bf Probability Theory}),
	\item extreme values of cocycles given by Radon-Nikodym derivatives of Patterson-Sullivan measures  induced by the quasi-invariant action of $G$ on its limit set $\Lambda_G \subset \partial X$  ({\bf Ergodic  Theory}),
	\item extrinsic geometry of the orbit of $G$ on $X$ in terms of whether the Bowen-Margulis measure is finite or not   ({\bf Non-positively curved and Hyperbolic Geometry}).
\end{enumerate}
The relation between (1) and (2) has been studied in probability in the context of abelian $G$ and free $G$. The relation between  (2) and (3) on the other hand has been studied thoroughly in the context of pairs $(X,G)$ as above. However the connection between (1) and (3) is unexplored territory for pairs $(X,G)$ as above.  We achieve this connection in the present paper via  the mediation of ergodic theoretic techniques (2), which play a key role in the proofs of our main results. One of the main tools we use from ergodic theory is mixing of the geodesic flow with respect to the Bowen-Margulis measure. The basic test case where $G$ is a free group and $X$ its Cayley graph with respect to a standard generating set had been dealt with in \cite{sarkar:roy:2016}; however this example is somewhat orthogonal to the main thrust of the present paper and examples explored therein, as geodesic flow is {\it not} mixing in the case of the free group. To address this largely excluded case of the free group, we devote a final subsection to  normal subgroups of free (or more generally hyperbolic) groups, where the Bowen-Margulis measure is used and we recover the corresponding theorem from \cite{sarkar:roy:2016}.

The connection between the probabilistic and the ergodic theoretic perspectives ((1) and (2) in the above list) is, in the general form that constitutes the background of this paper, due essentially to Rosi{\'n}ski \cite{Rosinski:1994,Rosinski:1995,Rosinski:2000} (see also the encyclopedic monograph \cite{samorodnitsky:taqqu:1994} and the recent survey \cite{roy:2017}). The study of stationary \emph{$\sas$ random fields} (i.e., stochastic processes indexed by $G$ such that each finite linear combination follows an $\sas$ distribution) is important in probability theory because such fields appear as scaling limits of regularly varying random fields having various dependence structures. These random fields come naturally equipped with a Rosi{\'n}ski representation, thus connecting with measurable dynamical systems in a canonical manner. The naturality of \emph{$\sas$ random fields} in the context of dynamical/ergodic-theoretic applications is in fact a consequence of  the exact correspondence, furnished by the Rosi{\'n}ski representation, between such stochastic processes and \emph{quasi-invariant} (or \emph{nonsingular}) group actions, and hence dynamical cocycles. We outline the connection in Section \ref{sec:sas} and summarize the discussion as follows (for details, see Theorem \ref{omni-sas}).

Given a standard measure space $(S,\mu)$ equipped with a \emph{quasi-invariant} (i.e., measure-class preserving) group action $\{\phi_g\}_{g \in G}$, a $\pm 1$-valued cocycle $\{c_g\}_{g\in G}$ (that is, $$c_{gh}(s) = c_h(s) + c_g(\phi_h(s))$$ for $\{\phi_g\}$ and a function $f \in \LL^\alpha (S,\mu)$, there exists a stationary $\sas$ random field $\{Y_g\}$ indexed by $G$ admitting an integral representation (known as the Rosi{\'n}ski representation):
\begin{equation} \label{eqn:Ros_Repn}
Y_g \eqd \int_S c_g(x)\left(\frac{d(\mu\circ\phi_g)}{d\mu}(x)\right)^{1/\alpha} f\circ \phi_g(x) dM(x), \mbox{\ \ } g \in G,
\end{equation}
where the above integral is with respect to an $\sas$ random measure $M$ on $S$ with control measure $\mu$. We recall that a random measure $M$ is called an $\sas$ random measure with control measure $\mu$ if for each set $A$ with $\mu(A) < \infty$,  the random variable $M(A)$ follows an $\sas$ distribution with scale parameter $(\mu(A))^{1/\alpha}$; see, for example, \cite{samorodnitsky:taqqu:1994}. 

Conversely, given a stationary $\sas$ random field $\{Y_g\}$ indexed by $G$, there exist a standard measure space $(S,\mu)$ equipped with a quasi-invariant group action $\{\phi_g\}_{g \in G}$, a $\pm 1$-valued cocycle $\{c_g\}_{g\in G}$ and a function $f \in \LL^\alpha (S,\mu)$ such that $Y_g$ admits a Rosi{\'n}ski representation given as above.\\

When $\mu$ is a probability measure (often the case in this paper), we shall use $\Lambda$ to denote the space $S$ (as our probability measures will be typically supported on limit sets $\Lambda$). With this change of notation, the basic probabilistic question we address in this paper is:
\begin{qn}\label{mainq}
	Find sufficient conditions on a non-singular conservative action of $G$ on a probability measure space $(\Lambda, \SSS, \mu)$
	to ensure that the growth of partial maxima of the associated  stationary $\sas$ random field indexed by $G$ is like the i.i.d.\ case.
\end{qn}
When $G= \Z^d$ and $X$ is a Cayley graph of $G$ with respect to a standard generating set,  this can never happen \cite{samorodnitsky:2004a, roy:samorodnitsky:2008}. There is only one recent example giving a positive answer to Question \ref{mainq}:  $G=F_d$ is  free, $X$ is a Cayley graph of $G$ with respect to a standard generating set,  and $\L$ is the Cantor-set boundary of $F_d$ equipped with the Patterson-Sullivan measure \cite{sarkar:roy:2016}.
In this paper we prove that there is a large class of examples, {\it geometric} in origin, giving a positive answer to Question \ref{mainq}
(see Theorem \ref{main-nvecg}):

\begin{enumerate}
	\item Non-elementary Gromov-hyperbolic groups $G$ acting on a Cayley graph $X=\Gamma_G$ (with respect to a finite generating set) and on the boundary $\Lambda = \partial G$, equipped with the Patterson-Sullivan measure class. This directly generalizes the main theorem of \cite{sarkar:roy:2016}.
	\item Groups $G$  acting  on proper CAT(-1) spaces $X$ with limit set $\Lambda \subset \partial X$ equipped with the Patterson-Sullivan measure class,  and satisfying the following:
	\begin{itemize}
		\item The length spectrum of $X/G$ is non-arithmetic,
		\item the associated Bowen-Margulis measure on the  unit tangent bundle $UM$ of $M = X/G$ is finite.
	\end{itemize}
	\item A lattice $G$ in a higher rank lie group, acting on the symmetric space $X$ and its Furstenberg boundary $\Lambda= \partial X$ equipped with the Patterson-Sullivan measure class.
	\item The mapping class group $G$ acting geometrically on Teichm\"uller space $X$ and measurably on the Thurston boundary $\Lambda = \partial X$ equipped with the Thurston measure.
\end{enumerate}

For  groups $G$ acting  on proper CAT(-1) spaces $X$ with limit set $\Lambda \subset \partial X$ (as in Item (2) above),
finiteness of the Bowen-Margulis measure $\mubm$ of $UM$ in fact provides a new  phase transition boundary for the behavior of the growth of partial maxima. The growth of partial maxima is like the i.i.d.\ case if and only if the Bowen-Margulis measure $\mubm$ of $UM$  is finite (Theorem   \ref{main-nvecg} Item(2) and Theorem  \ref{ecg-gi}).  An important technical tool  that we use in the proofs of the main Theorems \ref{main-nvecg} and Theorem  \ref{ecg-gi} is mixing of the geodesic flow (in cases (2), (3), (4) above).
Mixing of the geodesic
flow in turn is used to count the number of orbit points inside an $n-$ball.

Mixing, in this strong form, fails for Gromov hyperbolic groups equipped with the word metric \cite{bader-furman}. However, for infinite normal subgroups of infinite index in such groups, we establish  a slightly weaker  counting technique for the number of orbit points. This allows us to obtain Theorem \ref{ecg-hypgp-normal}: the behavior of partial maxima for a normal subgroup $H$ of a hyperbolic group $G$ is \iid-like if and only if $H$ is of finite index in $G$. In the setup of hyperbolic groups, the latter provides the analog of Theorem  \ref{ecg-gi} -- the  $\mubm (UM) = \infty$ case  for CAT(-1) spaces.

A key aim in this paper is to bring into focus the geometry underpinning Question \ref{mainq}. We   replace the default word metric of earlier works on the subject \cite{samorodnitsky:2004a,roy:samorodnitsky:2008,sarkar:roy:2016} by a general proper geodesic metric space $(X,d)$. Apart from Roblin's fundamental dichotomy on the behavior of the Poincar\'e series \cite{roblin-memo},
the tools we bring in to answer Question \ref{mainq} are also from the more geometric aspects of ergodic theory:  mixing of the geodesic flow and  equidistribution of spheres. We introduce  an invariant called {\bf extremal cocycle growth} incorporating both the geometry of the action of $G$
on $(X,d)$ as well as the quasi-invariant action of $G$ on $(\L, \SSS, \mu)$ whose asymptotic qualitative behavior determines the answer to  Question \ref{mainq}. This invariant records the appropriately normalized maximal distortion of the measure $\mu$ at a point $\xi \in \L$ with respect to actions of group elements $g$ which move a fixed point $o \in X$ a bounded amount.

\subsection{Densities and Extremal Cocycle Growth}\label{sec-ecg}
\subsubsection{Horofunction boundary and Busemann function:}
Throughout  $(X,d)$ will be a proper geodesic metric space.
\begin{defn}
	\cite{gromov-hypm}
	Let $\hat{C}(X) = C(X)/\sim$ denote  the compact set of $1$-Lipschitz functions with the topology of convergence on compact subsets, where $f \sim g$ if $f-g$ is a constant.
	Embed $X$ in $\hat{C}(X)$ via $i: x \to d_X(x,\cdot)$. The closure $\bbar{i(X)}$ is called the {\bf horofunction-compactification} of $X$ and $\partial_h X = \bbar{ i(X)}\setminus i(X)$ is the  {\bf horofunction-boundary} of $X$.
	
	Given a (parameterized) geodesic ray
	$\gamma \subset (X,d)$,
	the {\bf Busemann function} based at $x$ for $\gamma$ is given by
	$$\beta_\gamma(x)=\lim_{t \to \infty} \left(d(x,  \gamma(t))-t\right).$$
\end{defn}

If $X$ is CAT(0), $\bbar{i(X)}=\overline{X}$ equals the usual {\bf visual compactification} $\bbar X$ and $\partial_h X$ is the visual boundary. If $X=Teich(S)$ is the Teichm\"uller space, $\bbar i(X)$ gives the {\bf Gardiner-Masur compactification} (see \cite{miyachi-hb} and references therein).

If $(X,d)$ is Gromov-hyperbolic and
$\xi$ is an endpoint of a geodesic $\gamma$, the equivalence class of
$\beta_\gamma$'s with $\gamma(\infty) = \xi$ will be denoted as $\beta_\xi$.
Also, if $(X,d)$ is Gromov-hyperbolic, quotienting $\partial_h X$ further by bounded functions we obtain the  {\bf Gromov boundary} $\partial_g {X}$ \cite[Section 2.5]{calegari-notes}.  The pre-image of $\xi$ (under this further projection) are the elements of the equivalence class $\beta_\xi$. To get a well-defined Busemann function in this case, instead of an equivalence class, we shall define
\begin{equation}\label{eq:buse}
\beta_\xi (p,q) :=\limsup_{z\to \xi}\ (d(p,z)-d(q,z)).
\end{equation}

In all three cases (CAT(0),  Teichm\"uller, or Gromov-hyperbolic) we shall choose a base-point $o$ and normalize Busemann functions such that
$\beta_\xi (o)=0$. If in addition $X$ is a Cayley graph of a  hyperbolic group, $o$ will be the identity.

\subsubsection{Quasiconformal density:}
Now suppose $G$ acts properly discontinuously by isometries on $X$.
\begin{defn}\cite[p. 721]{cm-gafa} \label{qcdens} Let $M(\partial_hX) $ denote the collection  of positive
	finite Borel measures on  $\partial_hX $.
	A
	$G-$invariant
	{\bf conformal density}
	of dimension $v$ ($v\geq 0$)
	on $\partial_hX$
	is a
	continuous
	$G-$equivariant
	map $X \to M(\partial_hX)$
	sending
	$x \to \mu_x$
	such that
	$$\frac{d\mu_x}{d\mu_o}(\xi)=exp(-v\beta_\xi(o,x)).$$
	
	For $X$  Gromov-hyperbolic, let $M(\partial_g X)$ denote the collection  of positive
	finite Borel measures on  $\partial_g X $. A  $G-$equivariant map $X \to M(\partial_g X)$ sending $x \to \mu_x$
	is said to be a $C-$quasiconformal
	density
	of dimension
	$v$ ($v\geq 0$),
	for some $C\geq 1$, if
	\begin{equation}\label{eq-qc}
	\frac{1}{C}exp(-v\beta_\xi(y,x))
	\leq
	\frac{d\mu_{x}}{d\mu_y} (\xi)
	\leq C exp(-v\beta_\xi(y,x))
	\end{equation}
	for all $x, y \in X$, $\xi\in \partial X$; in particular,
	$$\frac{1}{C} exp(-v\beta_\xi(o,g.o))\leq \frac{d\mu_{g.o}}{d\mu_o} (\xi)
	\leq C exp(-v\beta_\xi(o,g.o)),$$ for all $g \in G$.
\end{defn}	

If Equation \ref{eq-qc} holds for some $C>0$ and all $x, y \in X$,  $\xi\in \partial X$ we shall simply write
$$\frac{d\mu_{x}}{d\mu_y} (\xi)
\asymp  exp(-v\beta_\xi(y,x)), $$ omitting the specific value of $C$.
\begin{setup}\label{setup} The setup for the rest of the paper is as follows. $(G,X,\L)$ will denote one of the following:
	\begin{enumerate}
		\item $X$ is a proper  CAT(-1) space,  $G$  a non-elementary discrete group acting properly discontinuously by isometries on $X$ (recall that $G$ is non-elementary means that its limit set is infinite), and $\L$ the limit set.
		\item  $X$ is a proper Gromov-hyperbolic space,   $G$  a non-elementary discrete group acting properly discontinuously by isometries on $X$, and $\L$ the limit set.
		\item $X$ is the Teichm\"uller space $Teich(S)$ of a closed surface $S$,  $G=MCG(S)$ acting on $Teich(S)$, and $\L$ the Thurston Boundary $\PMF(S)$.
		\item $X$ is a symmetric space for a Lie group $\mathcal G$ of higher rank,   $G$ is a lattice in $\mathcal G$, and $\L$ the Furstenberg boundary of $X$.
	\end{enumerate}
	
	We shall  use the convention that $\partial X$ stands for
	the horofunction boundary $\partial_h X$ in cases 1, 3, 4.
	and the Gromov boundary $\partial_g X$ in Case 2.
	
	In all the cases $\mu$ will denote a conformal or quasiconformal density on $\L \subset \partial X$ (see Section \ref{sec:ps} for  existence).
\end{setup}

\subsubsection{Extremal Cocycle Growth (ECG):}\label{subsec-ecg} Let $(X, G, \L)$ be as in Setup \ref{setup}.  Let $x \to \mu_x$ be a $G$-invariant conformal or quasiconformal density of dimension $v$. Define
\begin{equation}\label{eq-vn}
V_n = exp(vn).
\end{equation}
Let $B(o,n)$ denote the $n-$ball about $o \in X$ and $$B_n =\{g \in G\vert g.o \in B(o,n)\}.$$ Since $\mu_x, \mu_y$ are absolutely continuous with respect to each other by Equation \ref{eq-qc}, they have the same support. Let $\L$ denote this support. In the cases that we shall be interested in, $\mu_o(\L) $ is finite and hence without loss of generality, we can assume that $\mu_o$ is a probability measure. For  $\xi \in (\L,\mu)$ we evaluate the point-wise partial maxima of the Radon-Nikodym derivatives and let
\begin{equation}\label{eq-anxi}
A_n(\xi) := \max_{g \in B_n} \left[\frac{d\mu_{g.o} }{d \mu_o}(\xi) \right].
\end{equation}
To simplify notation, we let $\mu_o=\mu$, so that $\mu_{g.o}=g_\ast \mu_o$, i.e.\ $\mu_{g.o}(A) = \mu_o (g^{-1}(A))$ and $$A_n(\xi) := \max_{g \in B_n} \left[\frac{dg_\ast\mu}{d \mu}(\xi) \right].$$
This $A_n(\xi)$ is the maximal distortion of the reference measure $\mu$ at the point $\xi$ under group elements $g$ which move the basepoint $o$ at most distance $n$.
Let
\begin{equation}\label{eq-an}
\bbar{A_n} := \int_\L A_n(\xi) d\mu(\xi)
\end{equation}
be the {\bf expectation} of the  point-wise partial maxima $A_n(\xi)$. We shall call $A_n$ the extremal value of the cocycle $\frac{dg_\ast\mu }{d \mu}$. Finally define the {\bf normalized extremal cocycle}
\begin{equation}\label{eq-cn}
C_n:= \frac{\bbar{A_n}}{V_n}.
\end{equation}  We shall refer to
the asymptotics of $C_n$ (as $n\to \infty$) as {\bf extremal cocycle growth}. More precisely,

\begin{defn}\label{def-ecgps}
	We shall say that the action of $G$ on $(\L,\mu)$
	\begin{enumerate}
		\item has {\bf vanishing extremal cocycle growth} (vanishing ECG for short) if $$\, \lim_{n\to\infty} C_n=0,$$ i.e.\ the limit exists and equals zero;
		\item  has {\bf non-vanishing extremal cocycle growth}
		(non-vanishing ECG for short) if $$\, \lim\inf_{n\to\infty}C_n>0.$$
	\end{enumerate}
\end{defn}

It follows from Definition~\ref{qcdens}  that $$\bbar{A_n} \asymp \int_\L \max{g \in B_n} \left(\exp(v \beta_{\xi}(o, g.o))\right) d\mu(\xi)$$
\begin{equation}\label{eq:ecg}
C_n \asymp \frac{1}{V_n} \int_\L \max_{g \in B_n}\left( \exp(v \,  \beta_{\xi} \, (o, g.o))\right)
d\mu(\xi)
\end{equation}

In this paper  we shall be interested in the dichotomy given by zero and non-zero extremal  cocycle growth. It will suffice therefore to estimate the asymptotics of the RHS of Equation~\eqref{eq:ecg}.

\subsubsection{A brief example}\label{sec-sl2z} A common inspiration for many of the settings we study in this paper is the action of the group $SL(2,\mathbb Z)$ on the hyperbolic plane $X = \mathbb H^2$. The limit set of $SL(2, \mathbb Z)$ is $\R \cup \infty,$ and the action on the boundary (and on $\mathbb H^2$) is given by fractional linear maps, $$\phi_g(\xi) = \frac{a\xi+b}{c\xi+d},$$ where $g = \begin{pmatrix} a & b \\ c& d \end{pmatrix}.$ The Lebesgue measure class is preserved, and if we choose standard Lebesgue measure as our reference measure $\mu$,  the Radon-Nikodym derivative $$\frac{d g_*\mu}{d\mu}(\xi) = \frac{1}{(c\xi+d)^2}.$$ If we take, for example our basepoint $o$ in $X = \mathbb H^2$ as $i$, and our radius $n = \log 3,$ there are $5$ elements of $SL(2, \Z)$ (we are eliding the issue of elements that stabilize $i$, here) which have $$d(g.o, 0) \le n,$$ namely the identity, and the matrices $$ \begin{pmatrix} 1& \pm 1 \\ 0 &1\end{pmatrix}, \begin{pmatrix} 1& 0 \\ \pm 1 & 1\end{pmatrix}.$$ If we take $\xi = 2$ as our reference point on the boundary, we have that the Radon-Nikodym derivatives $$\frac{1}{(2c+d)^2}$$ which takes on the values $1$ and $1/3$ at the matrices above, so $A_{\log 3}(2) = 1,$ in this formulation. The group $SL(2, \Z)$ can also be viewed as acting on its Cayley graph, which gives a different interpretation which is also generalized in our work.

\subsubsection{Free groups} We point out here that the free group on 2 generators $F_2$ provides us with examples to which both Theorem   \ref{main-nvecg}  and Theorem  \ref{ecg-gi} apply. The group $G=F_2$ thus furnishes three kinds of examples of non-singular conservative actions.  
\begin{enumerate}
	\item $F_2$ acting on its own boundary equipped with the Patterson-Sullivan measure. Here, extremal cocycle growth is non-vanishing and hence the growth of partial maxima of the associated $\sas$ random field is like the \iid case.
	\item  $F_2$ may be identified with an index 6 subgroup of $PSL(2, \mathbb Z)$
	(the second congruence subgroup). Hence, as in Section \ref{sec-sl2z}, it acts on the circle $\R \cup \infty,$ equipped with the Lebesgue measure. This corresponds to an action where the associated Bowen-Margulis measure 
	of the unit tangent bundle of $\Hyp^2/F_2$ is finite. Again, 
	extremal cocycle growth is non-vanishing (by Theorem   \ref{main-nvecg}) and hence the growth of partial maxima of the associated $\sas$ random field is like the \iid case.
	\item $F_2$ arises as a normal	subgroup of the fundamental group of the figure eight knot complement $M$. The 3-manifold $M$ admits a hyperbolic structure \cite{thurstonnotes}. Hence $F_2$ acts on $\Hyp^3$ freely, properly discontinuously by isometries. The limit set in this case turns out to be the
	whole boundary $\partial \Hyp^3 = S^2$. The sphere $S^2$ is again equipped with the Lebesgue measure class, which is preserved by the $F_2-$action.
	It turns out that extremal cocycle growth is vanishing (Theorem  \ref{ecg-gi}) and the growth of partial maxima of the associated $\sas$ random field is {\it not} like the \iid case.
\end{enumerate}

The above examples illustrate that the growth of partial maxima of the associated $\sas$ random field does not depend on the group $G$ alone but rather on the geometry of the space $X$ on which it acts, and via this action, on the specific nature of the associated probability measure space
$(\Lambda, \SSS, \mu)$ on which $G$ admits a non-singular action.

\subsubsection{Outline of the paper} In Section \ref{sec:sas}, we give a brief review of group indexed $\sas$-random fields $\{X_g\}_{g\in G}$ and deduce a basic criterion (Theorem \ref{main_prob}) in terms of  non-vanishing or vanishing of ECG (Definition \ref{def-ecgps}) that determines whether  the partial maxima of  $\{X_g\}_{g\in G}$ exhibits \iid-like behavior or not. This reduces the purely probabilistic question \ref{mainq} to the following  question lying at the interface of geometry, dynamics and probability:

\begin{qn}\label{mainq-dyna}
	Find sufficient conditions on triples $(G,X,\L)$ such that ECG is non-vanishing.
\end{qn}

In Section \ref{sec:psbm}, we recall various theorems from the literature that show that Patterson-Sullivan measures in the context of Setup \ref{setup} give quasiconformal densities. We also recall work of Furman~\cite{furman-coarse} and Bader-Furman~\cite{bader-furman} on Bowen-Margulis measures.
In Section \ref{sec:mix} we recall results on mixing of the geodesic flow and establish consequences on convergence of spherical averages. In the special case of the mapping class group acting on Teichm\"uller space, the corresponding result
(Theorem \ref{teichmix}) appears here for the first time.
Section \ref{sec:ecg-sa} is the technical core of the paper and relates spherical averages to ECG. In Section \ref{sec-mainthm} we prove the main Theorems of the paper. Theorem \ref{main-nvecg} establishes non-vanishing of ECG in the four cases mentioned at the beginning of the Introduction and Theorem \ref{ecg-gi} establishes vanishing of ECG for CAT(-1) examples with infinite Bowen-Margulis measure. Normal subgroups of hyperbolic (e.g.\ free) groups are treated in Section \ref{sec:ecgnormal}.

\section{Group-indexed stable random fields}\label{sec:sas}
We shall use  $(S, \SSS, \mu)$ to denote a $\sigma$-finite general Borel measure space and $(\L, \SSS, \mu)$  to denote a probability measure space.
\begin{defn}
	A (real-valued) random variable $Y$ is said to follow a {\bf symmetric $\alpha$-stable} ($\sas$) distribution with tail parameter $\alpha \in (0, 2]$ and scale parameter $\sigma > 0$ if it
	has characteristic function of the form $\mathbb{E}(e^{i\theta Y}) = \exp{\{-\sigma^{\alpha}|\theta|^\alpha\}}$, $\theta \in \mathbb{R}$.
\end{defn}
The value of the tail parameter equal to 2 corresponds to the Gaussian case.  Here we shall largely focus on $\alpha \in (0,2)$, i.e.\
the non-Gaussian case (see \cite{samorodnitsky:taqqu:1994} for a detailed treatment of $\alpha$-stable ($0<\alpha <2$) distributions).

\begin{defn}
	Let $G$ be a finitely generated infinite group with identity element $e$.
	A random field (that is, a collection of random variables) $\mathbf{Y}=\{Y_g\}_{g\in G}$ indexed by $G$ is called an $\sas$ random field if for all $k \geq 1$,
	$g_1, g_2, \ldots, g_k \in G$ and  $c_1, c_2, \ldots, c_k \in \mathbb{R}$, the linear combination $\sum_{i=1}^k c_i Y_{g_i}$  follows an $\sas$ distribution.
\end{defn}

\noindent {\bf Integral Representations:}
Any such random field has an integral representation of the type
\begin{equation}\label{integrep}
Y_g \eqd \int_S f_g(x)M(dx), \mbox{ \ \  } g \in G,
\end{equation}
where $M$ is an S$\alpha$S random measure on some $\sigma$-finite standard Borel space $(S,\mathcal{S}, \mu)$, and $f_g \in \mathcal{L}^\alpha (S, \mu)$ for all $g \in G$; see
Theorem 13.1.2 of \cite{samorodnitsky:taqqu:1994}. This simply means that each linear combination $\sum_{i=1}^k c_i Y_{g_i}$  follows an S$\alpha$S distribution
with scale parameter $\|\sum_{i=1}^k c_i f_{g_i}\|_\alpha$. We shall always assume, without loss of generality, that
$$\bigcup_{g \in G} \{x \in S: f_g(x) \neq 0\} = S$$
modulo $\mu$. That is, for $\mu$-a.e. $x \in S$, there is a $g \in G$ so that $f_g(x) \neq 0$.

\begin{defn}
	The field $\{Y_g\}_{g \in G}$ is called {\bf left-stationary}
	if $\{Y_g\} \overset{d}{=} \{Y_{hg}\} $ for all $h\in G$, i.e.\ the joint distributions of the $k-$tuples $(Y_{g_1},Y_{g_2}, \cdots, Y_{g_k} )$
	and $(Y_{hg_1},Y_{hg_2}, \cdots, Y_{hg_k} )$ are equal for all $k$ and all $k-$tuples $({g_1},{g_2}, \cdots, {g_k} )$.
\end{defn}
We shall simply write stationary to mean left-stationary throughout this paper.

\begin{theorem} \label{omni-sas} {\bf (Rosi{\'n}ski Representation) \cite{Rosinski:1994,Rosinski:1995,Rosinski:2000}}
	Given a standard measure space $(S,\mu)$ equipped with a quasi-invariant group action $\{\phi_g\}_{g \in G}$, a $\pm 1$-valued cocycle $\{c_g\}_{g\in G}$ for $\{\phi_g\}$,  and an $f \in \LL^\alpha (S,\mu)$, there exists a stationary $\sas$ random field indexed by $G$  admitting an integral representation (known as the Rosi{\'n}ski representation):
	\begin{equation}\label{eqthm}
	f_g(x)=c_g(x)\left(\frac{d(\mu\circ\phi_g)}{d\mu}(x)\right)^{1/\alpha}\left( f\circ \phi_g\right)(x), \mbox{\ \ } g \in G.
	\end{equation}
	Conversely, given a stationary $\sas$ random field $\{Y_g\}$ indexed by $G$, there exist a standard measure space $(S,\mu)$ equipped with a quasi-invariant group action $\{\phi_g\}_{g \in G}$, a $\pm 1$-valued cocycle $\{c_g\}_{g\in G}$ and an $f \in \LL^\alpha (S,\mu)$ such that $Y_g$ admits a Rosi{\'n}ski representation given by \eqref{eqthm}.
\end{theorem}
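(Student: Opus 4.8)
The statement is the classical Rosi\'nski correspondence between stationary $\sas$ fields and nonsingular (measure-class preserving) group actions, so the plan is to recall its proof in the two directions.

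\emph{From the dynamical data to the field.} Given $(S,\mu)$, the quasi-invariant action $\{\phi_g\}$, the $\pm1$-valued cocycle $\{c_g\}$ and $f\in\LL^\alpha(S,\mu)$, I would \emph{define} $f_g$ by \eqref{eqthm} and check the two facts that make \eqref{integrep} produce a stationary field. First, $f_g\in\LL^\alpha(S,\mu)$: since $|c_g|\equiv 1$, the substitution induced by $\phi_g$ together with the defining property of the Radon--Nikodym derivative gives $\int_S |f_g|^\alpha\,d\mu = \|f\|_\alpha^\alpha<\infty$, so each $f_g$ is a legitimate spectral function. Second, stationarity: the joint law of $(Y_{g_1},\dots,Y_{g_k})$ is encoded by the function $(c_1,\dots,c_k)\mapsto \big\|\sum_i c_i f_{g_i}\big\|_\alpha$ through the characteristic function $\exp\{-\|\sum_i c_i f_{g_i}\|_\alpha^\alpha\}$, so it is enough to show $\big\|\sum_i c_i f_{hg_i}\big\|_\alpha = \big\|\sum_i c_i f_{g_i}\big\|_\alpha$ for every $h\in G$. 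Feeding in the group law for $\{\phi_g\}$, the chain rule for Radon--Nikodym derivatives, and the cocycle identity for $\{c_g\}$, a direct computation collapses $\sum_i c_i f_{hg_i}$ into $c_h\cdot\big(\tfrac{d(\mu\circ\phi_h)}{d\mu}\big)^{1/\alpha}\cdot\big(\sum_i c_i f_{g_i}\big)\circ\phi_h$; then the same $|c_h|\equiv1$ change-of-variables estimate shows its $\LL^\alpha$-norm is unchanged. This direction is routine bookkeeping once the conventions for ``left action'' and ``cocycle'' are chosen consistently.

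\emph{From the field to the dynamical data.} Here is the plan, starting from a stationary $\sas$ field $\{Y_g\}$. (1) Invoke the general integral representation \eqref{integrep} (Theorem~13.1.2 of \cite{samorodnitsky:taqqu:1994}) to fix \emph{some} representation $Y_g\eqd\int_S f_g\,dM$ on a $\sigma$-finite standard Borel space, with $\bigcup_g\{f_g\neq 0\}=S$. (2) Pass to a \emph{minimal} representation --- one in which $\SSS$ is generated by the ratios $f_g/f_{g'}$ on the common support --- using existence of minimal representations of $\sas$ processes (\cite{Rosinski:1994,Rosinski:1995,Rosinski:2000}; see also \cite{roy:2017}). (3) Invoke the rigidity of minimal representations: if $\{f_g\}$ on $(S,\mu)$ and $\{\tilde f_g\}$ on $(\tilde S,\tilde\mu)$ are minimal representations of the same process, they are intertwined by a mod-$0$ Borel isomorphism $\Psi\colon S\to\tilde S$ and a nowhere-vanishing measurable multiplier $w$ with $f_g = w\cdot(\tilde f_g\circ\Psi)$ for all $g$ and $|w|^\alpha = \tfrac{d(\tilde\mu\circ\Psi)}{d\mu}$. (4) Apply (3) to $\{f_g\}$ and $\{f_{hg}\}$, which by left-stationarity both represent $\{Y_g\}$; this yields, for each $h\in G$, a nonsingular Borel isomorphism $\phi_h$ of $(S,\mu)$ and a $\pm1$-valued function $c_h$ such that $f_{hg}(x) = c_h(x)\big(\tfrac{d(\mu\circ\phi_h)}{d\mu}(x)\big)^{1/\alpha} f_g(\phi_h(x))$ for all $g$ and $\mu$-a.e.\ $x$. (5) Use uniqueness of the intertwiner in (3) to deduce that, after the usual restriction to a conull invariant subset, $\{\phi_g\}$ is a genuine quasi-invariant group action and $\{c_g\}$ is a $\pm1$-valued cocycle over it. (6) Set $f:=f_e$; then (4) specialized to $g=e$ is precisely \eqref{eqthm}.

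\emph{The main obstacle.} The crux is step~(3): existence and uniqueness (up to the $(\phi,c)$-equivalence) of minimal representations. This ultimately rests on Hardin's description of the linear isometries of $\LL^\alpha(\mu)$, for $\alpha$ not an even integer, as weighted composition operators, and this is exactly the place where the non-Gaussian hypothesis $0<\alpha<2$ is indispensable: for $\alpha=2$ an $\LL^2$-isometry may be an arbitrary orthogonal transformation and the clean Rosi\'nski form fails. A secondary, purely measure-theoretic, point is promoting the family of mod-$0$ maps $\{\phi_g\}$ to an honest action on a standard space in step~(5); this is handled by standard arguments but needs some care.
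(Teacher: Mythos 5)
The paper does not prove this theorem: it is stated as a citation of the results of Rosi\'nski \cite{Rosinski:1994,Rosinski:1995,Rosinski:2000}, so there is no in-paper argument to compare against. Your sketch is a faithful reconstruction of the argument as it actually appears in those references: the forward direction by direct computation (the $\LL^\alpha$-norm-preservation via the change of variables hidden in the Radon--Nikodym factor, and the collapse of $\sum_i c_i f_{hg_i}$ into a weighted composition of $\sum_i c_i f_{g_i}$ via the cocycle identity and the chain rule), and the converse via existence of minimal representations, the rigidity theorem that any two minimal representations of the same field are intertwined by a nonsingular point map plus multiplier, application of that rigidity to $\{f_g\}$ versus $\{f_{hg}\}$ (which coincide in law by stationarity), and then assembling the resulting family of maps into an honest action by the essential uniqueness of the intertwiners. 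You also correctly isolate the one genuinely hard ingredient --- Hardin's theorem that $\LL^\alpha$-isometries for $\alpha\in(0,2)$, $\alpha\neq$ even integer, are weighted composition operators --- and correctly flag the measure-theoretic care needed to upgrade a family of mod-$0$ Borel isomorphisms and cocycles defined up to null sets into an actual action on a standard space. One small presentational note: the paper's displayed cocycle identity $c_{gh}(s)=c_h(s)+c_g(\phi_h(s))$ is written additively, which is inconsistent with $\pm1$-valued $c_g$; the intended identity is multiplicative, $c_{gh}=c_h\cdot(c_g\circ\phi_h)$, and your computation implicitly (and correctly) uses the multiplicative form.
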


The stationary random field indexed by $G$ corresponding to the standard measure space $(S,\mu)$, the quasi-invariant group action $\{\phi_g\}_{g \in G}$, the $\pm 1$-valued cocycle $\{c_g\}_{g\in G}$  and an $f \in \LL^\alpha (S,\mu)$ is denoted as
$$Y_g:=Y_g (S,\mu, \{\phi_g\},  \{c_g\}, f), g \in G.$$
In the special case that $f \equiv 1$ (here we must have $\mu(S) < \infty$), we simplify the notation to $$Y_g:=Y_g (S,\mu, \{\phi_g\},  \{c_g\}, 1)=Y_g (S,\mu, \{\phi_g\},  \{c_g\}), g \in G.$$ If  $(S, \mu)$ is a probability measure space, we shall replace $S$ by $\L$.

In this work, we are interested in the rate of growth of the partial maxima sequence
$$M_n = \max_{g\in B_n}  |Y_g|$$
as $n$ increases to $\infty$. It was shown  in \cite{samorodnitsky:2004a, roy:samorodnitsky:2008} that when $G=\mathbb{Z}^d$, the rate of growth of $M_n$ is like the i.i.d. case if and only if the action of $\mathbb{Z}^d$ on  $(S,\mu)$ in Theorem \ref{omni-sas} above is not conservative. In general, the rate of growth of $M_n$ is controlled by that of the deterministic sequence
$$
b_n  =\left( \int_S \max_{g \in B_n} |f_g(x)|^\alpha \mu(dx) \right)^{1/\alpha}.
$$
See, for example, Section 3 of \cite{samorodnitsky:2004a}. The following proposition relates this sequence with the extremal value of the cocycle defined in \eqref{eq-an}.

\begin{prop}\label{pmax=an}
	Let $(X,G,\partial X)$ be as in Setup \ref{setup}. Let $$Y_g:=Y_g (\L,\mu, \{\phi_g\},\{c_g\}), g \in G,$$ be a stationary $\sas$ random field indexed by $G$ where $\mu$ is a quasiconformal measure supported on $\L \subset \partial X$. Then $b_n = \bbar{A_n}^{1/\alpha}$, where $A_n$ is as in \eqref{eq-an}.
\end{prop}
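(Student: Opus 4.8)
The plan is to unwind all the definitions and check that the two quantities agree term by term. Recall that $b_n = \left(\int_S \max_{g\in B_n} |f_g(x)|^\alpha\, \mu(dx)\right)^{1/\alpha}$, where by the Rosi\'nski representation (Theorem~\ref{omni-sas}) the random field $Y_g := Y_g(\L,\mu,\{\phi_g\},\{c_g\})$ has integral representation with kernel
\[
f_g(x) = c_g(x)\left(\frac{d(\mu\circ\phi_g)}{d\mu}(x)\right)^{1/\alpha},
\]
since here $f\equiv 1$. Thus $|f_g(x)|^\alpha = \frac{d(\mu\circ\phi_g)}{d\mu}(x)$, because $c_g$ is $\pm 1$-valued. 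So the first step is to substitute this into the definition of $b_n$, yielding
\[
b_n^\alpha = \int_\L \max_{g\in B_n} \frac{d(\mu\circ\phi_g)}{d\mu}(x)\, d\mu(x).
\]

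The second step is to reconcile the Radon--Nikodym derivative $\frac{d(\mu\circ\phi_g)}{d\mu}$ appearing in the Rosi\'nski representation with the derivative $\frac{dg_*\mu}{d\mu}$ appearing in the definition of $A_n(\xi)$ in \eqref{eq-anxi}. The point is a bookkeeping identification of the quasi-invariant action $\{\phi_g\}$ with the boundary action of $G$ on $(\L,\mu)$: with the conventions fixed just before \eqref{eq-an}, where $\mu_{g.o} = g_*\mu$ and $g_*\mu(A) = \mu(g^{-1}A)$, one has $\mu\circ\phi_g = g_*\mu$ (or $g^{-1}_*\mu$, depending on the left/right convention — the precise matching is forced by left-stationarity of $\{Y_g\}$ and will need to be stated carefully). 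Once this identification is in place, $\max_{g\in B_n}\frac{d(\mu\circ\phi_g)}{d\mu}(\xi) = \max_{g\in B_n}\frac{dg_*\mu}{d\mu}(\xi) = A_n(\xi)$ by the definition of $A_n(\xi)$, and then $b_n^\alpha = \int_\L A_n(\xi)\, d\mu(\xi) = \bbar{A_n}$ by \eqref{eq-an}, giving $b_n = \bbar{A_n}^{1/\alpha}$ as claimed.

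The main obstacle — really the only substantive point — is the bookkeeping in the second step: making sure the direction of the group action and the side on which it acts are consistent between the probabilistic setup (where the cocycle identity $c_{gh}(s) = c_h(s) + c_g(\phi_h(s))$ and left-stationarity pin down the convention) and the geometric setup (where $B_n$ is defined via $g.o \in B(o,n)$ and $\mu_{g.o} = g_*\mu$). Since $B_n = B_n^{-1}$ would make this a non-issue, I would first note whether $B_n$ is symmetric (it is, as $d(g.o,o) = d(o,g^{-1}.o) = d(g^{-1}.o,o)$ by the isometric action); this symmetry means the maximum over $B_n$ is unaffected by replacing $g$ by $g^{-1}$, so any ambiguity between $g_*\mu$ and $g^{-1}_*\mu$ inside the $\max$ is harmless. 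With that observation the proof is a direct substitution and the remaining details are routine.
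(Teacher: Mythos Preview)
Your proof is correct and follows essentially the same route as the paper: substitute the Rosi\'nski kernel $f_g$ with $f\equiv 1$ into the definition of $b_n$, use $|c_g|=1$, and identify the resulting integral with $\bbar{A_n}$. The paper's proof is in fact terser than yours on the bookkeeping point you flag; the identification $\mu\circ\phi_g = \mu_{g^{-1}.o}$ is recorded elsewhere (Section~\ref{sec:psm}), and your observation that $B_n = B_n^{-1}$ cleanly disposes of the $g$ versus $g^{-1}$ ambiguity that the paper leaves implicit.
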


\begin{proof} Incorporating the form of Rosi{\'n}ski representation in $b_n$, we get
	\begin{align}
	b_n & =\left( \int_\L \max_{g \in B_n} |f_g(x)|^\alpha \mu(dx) \right)^{1/\alpha} \nonumber \\
	& = \left( \int_\L\max_{g \in B_n} \left[|f \circ \phi_g(x)|^\alpha \frac{d\mu \circ \phi_g}{d \mu}(x) \right]\mu(dx) \right)^{1/\alpha}. \nonumber 
	\end{align}
	Therefore, for $Y_g:=Y_g (\L, \mu, \{\phi_g\}), g \in G$,
	\begin{equation}\label{eqn:bn}
	b_n = \left( \int_\L \max_{g \in B_n} \left[\frac{d\mu \circ \phi_g}{d \mu}(x) \right]\mu(dx) \right)^{1/\alpha}.
	\end{equation}
	Hence from Equation \ref{eq-an}, $b_n = \bbar{A_n}^{1/\alpha}$.
\end{proof}

\subsection*{A sufficient condition for \iid-like behavior}\label{crit}
The purpose of the rest of this section is to show that if the action of $G$ on $(\L,\mu)$ has non-vanishing extremal cocycle growth (Definition \ref{def-ecgps}), then the growth of partial maxima of the associated $G-$indexed $\sas$ random field (via the Rosi\'nski representation Theorem \ref{omni-sas}) behaves like an \iid random field (see Definition \ref{def-iidlike} and Theorem \ref{main_prob} below for a precise statement). This may be done by recasting the proof of Theorem~4.1 of \cite{samorodnitsky:2004a} in our setup. We  present  a  sketch below along with the relevant modifications.

\begin{theorem} \label{main_prob}
	Consider a $G$-indexed stationary $\sas$ random field $\{Y_g\}$ with $$Y_g:=Y_g (\L,\mu, \{\phi_g\},  \{c_g\}, 1)=Y_g (\L,\mu, \{\phi_g\},  \{c_g\}), g \in G,$$ as in Proposition \ref{pmax=an}. Let $$M_n = \max_{g\in B_n}  |Y_g|.$$  Then the following dichotomy holds:
	\begin{enumerate}
		\item If the action of $G$ on $(\L,\mu)$ has non-vanishing extremal cocycle growth, then given any subsequence of $\{M_n\}$, there exists a further subsequence  $\{M_{n_k}\}$ such that
		\begin{equation}
			\frac{M_{n_k} }{V_{n_k}^{1/\alpha}} \convd \kappa Z_\alpha, \ {\rm as} \ n \to \infty, \label{convdist_to_Fr}
		\end{equation}
		where $\convd$ denotes convergence in distribution. Further, $Z_\alpha$ is a Frech\'{e}t type extreme value random variable and $\kappa$ is a positive constant that may depend on the choice of the subsequence $\{M_{n_k}\}$. If further $lim_{n \to \infty} C_n$ exists (and hence is positive), then $M_{n} / V_{n}^{1/\alpha}$ converges weakly to the limit in \eqref{convdist_to_Fr}.

		\item If the action of $G$ on $(\L,\mu)$ has vanishing extremal cocycle growth, then
		\begin{equation}
			\frac{M_n}{V_n^{1/\alpha}} \convp 0, \ {\rm as} \ n \to \infty, \label{convprob_to_zero}
		\end{equation}
		where $\convp$ denotes convergence in probability.
	\end{enumerate}
\end{theorem}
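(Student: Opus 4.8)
The plan is to follow the proof of Theorem~4.1 of \cite{samorodnitsky:2004a}, adapting it to the group-indexed setting. The crucial deterministic input is Proposition~\ref{pmax=an}, which identifies the controlling sequence $b_n$ of the partial maxima $M_n$ with $\bbar{A_n}^{1/\alpha}$, so that $b_n^\alpha / V_n = C_n$. The overall idea is that $|Y_g|$ are jointly $\sas$, and the maximum of a collection of $\sas$ random variables whose scale parameters have a prescribed growth behaves, after normalization, like the maximum of i.i.d.\ $\alpha$-stable variables precisely when the ``total mass'' $b_n^\alpha$ grows at the same rate as the number of summands would demand---here that rate is $V_n$.

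\textbf{Key steps.} First, using the series (LePage) representation of the $\sas$ random field $\{Y_g\}_{g\in B_n}$ one writes, conditionally on the measure-space data, $\max_{g\in B_n}|Y_g| \eqd b_n \cdot \Gamma_1^{-1/\alpha}\cdot \big(1 + o_P(1)\big)$ provided the ``individual terms are negligible'', i.e.\ the maximal $\mu$-contribution of any single point decays relative to $b_n^\alpha$; this negligibility is exactly where one invokes that the action is conservative (equivalently, that $A_n(\xi)\to\infty$ $\mu$-a.e., which forces $\max_g |f_g(x)|^\alpha$ to be spread out). Here $\Gamma_1$ is the first arrival time of a unit-rate Poisson process, so $\Gamma_1^{-1/\alpha}$ is Fr\'echet. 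Concretely, one shows that for any subsequence, the functions $h_n(x) := V_n^{-1}\max_{g\in B_n}|f_g(x)|^\alpha = V_n^{-1} A_n(x)$ have $\int_\L h_n \, d\mu = C_n$ bounded along a further subsequence (in the non-vanishing case, using $\liminf C_n > 0$ and the trivial bound coming from finitely many $g$), so by a compactness/tightness argument one can pass to a subsequence $n_k$ along which $\int h_{n_k}\,d\mu \to \kappa^\alpha \in (0,\infty)$ and the point masses are asymptotically negligible; then the single-big-jump heuristic for $\alpha$-stable sums yields \eqref{convdist_to_Fr} with that $\kappa$. If $\lim_n C_n$ exists and is positive, no subsequence is needed. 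Second, in the vanishing case $C_n = b_n^\alpha / V_n \to 0$, so by Markov's inequality applied to $M_n / V_n^{1/\alpha}$ (using tail estimates for $\sas$ maxima, e.g.\ $\mathbb{P}(M_n > t) \le$ const $\cdot b_n^\alpha t^{-\alpha}$ up to lower-order corrections, which follows from the union bound over $g\in B_n$ and the $\sas$ tail $\mathbb{P}(|Y_g| > t) \sim C_\alpha \sigma_g^\alpha t^{-\alpha}$ together with $\sum_{g\in B_n}\sigma_g^\alpha \asymp b_n^\alpha$), one gets $M_n / V_n^{1/\alpha} \convp 0$.

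\textbf{Main obstacle.} The delicate point is the negligibility of individual terms in the series representation, i.e.\ verifying that $\max_{g\in B_n} |f_g(x)|^\alpha$ carries vanishing relative mass at any fixed $x$ as $n\to\infty$; this is what guarantees a Fr\'echet (rather than a degenerate or mixed) limit and is the analogue of ``no atoms dominate''. In the $\Z^d$ case this was handled via conservativity plus Hopf's ergodic theorem; in our setting one must instead use that the action of $G$ on $(\L,\mu)$ is conservative and that the quasiconformal density has no atoms (the measure $\mu$ is non-atomic on $\L$, and the Radon--Nikodym cocycle $\frac{d g_\ast\mu}{d\mu}$ is bounded for each fixed $g$ while $B_n$ exhausts $G$), so that $\sup_g \frac{dg_\ast \mu}{d\mu}(\xi) = \infty$ $\mu$-a.e.\ forces the relative contribution of each $g$ to decay. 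Everything else is a routine transcription of the $\sas$ extreme-value machinery; the structural novelty is purely the geometric identification of $b_n$ via Proposition~\ref{pmax=an}, which has already been carried out.
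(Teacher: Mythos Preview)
Your treatment of Case~1 (non-vanishing ECG) is essentially the paper's: the LePage/series representation plus the one-large-jump heuristic, combined with the observation that $C_n = b_n^\alpha/V_n$ is bounded above (by quasiconformality) and below (by $\liminf C_n>0$), so one passes to a subsequence where $C_{n_k}$ converges. Fine.

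Case~2, however, has a genuine gap. Your proposed tail/Markov bound rests on the claim $\sum_{g\in B_n}\sigma_g^\alpha \asymp b_n^\alpha$, which is false. By stationarity every $Y_g$ has the \emph{same} scale $\sigma_e = \|f\|_\alpha$, so $\sum_{g\in B_n}\sigma_g^\alpha = |B_n|\,\sigma_e^\alpha$, whereas $b_n^\alpha = \int_\L \max_{g\in B_n}|f_g|^\alpha\,d\mu \le \int_\L \sum_{g\in B_n}|f_g|^\alpha\,d\mu = |B_n|\,\sigma_e^\alpha$, with strict inequality precisely in the conservative (non-dissipative) regime you are in. The union bound therefore yields $\mathbb{P}(M_n>\epsilon V_n^{1/\alpha}) \lesssim |B_n|/V_n$, not $C_n$; and vanishing ECG ($C_n\to 0$) does \emph{not} imply $|B_n|/V_n\to 0$ (indeed $C_n\le |B_n|/V_n$, the wrong direction). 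Nor is there an easy uniform bound $\mathbb{P}(M_n>t)\le K\,b_n^\alpha t^{-\alpha}$: the known tail asymptotic $t^\alpha\,\mathbb{P}(M_n>t)\to C_\alpha b_n^\alpha$ holds for fixed $n$ as $t\to\infty$, and for $\alpha\ge 1$ the ``lower-order corrections'' are not uniform in $n$ without a growth assumption on $b_n$.

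The paper handles Case~2 by a comparison argument (following \cite{samorodnitsky:2004a}): one constructs an auxiliary stationary $\sas$ field $\{Y'_g\}$ on a disjoint probability space with $aV_n^\epsilon \le b'_n = o(V_n^{1/\alpha})$, sets $Z_g=Y_g+Y'_g$, and observes that the combined scale satisfies $b_n^Z=(b_n^\alpha+{b'_n}^\alpha)^{1/\alpha}\ge aV_n^\epsilon$. This polynomial-type lower bound on $b_n^Z$ is exactly what is needed to make the series-representation argument show that $M_n/b_n^Z$ is tight; since also $b_n^Z=o(V_n^{1/\alpha})$, the conclusion $M_n/V_n^{1/\alpha}\convp 0$ follows. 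The auxiliary field is not a cosmetic device---it supplies the missing lower bound on the normalizing sequence that your direct approach cannot extract from $C_n\to 0$ alone.
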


\begin{defn} \label{def-iidlike} If the sequence of partial maxima of a group indexed stationary random field $\{Y_g\}$ satisfies Equation \ref{convdist_to_Fr}, we say that $\{Y_g\}$ is {\bf \iid-like} with respect to the behavior of partial maxima.
\end{defn}

The key ingredient of the proof of Theorem~\ref{main_prob} is the following series representation (see, for instance, Equation (4.12) of \cite{samorodnitsky:2004a}): For all fixed $n \geq 1$,
\begin{equation}
	(Y_g)_{g \in B_n} \eqd \left(b_n \mathfrak{C}_\alpha^{1/\alpha} \sum_{j=1}^{\infty} \varepsilon_j \Gamma_j^{-1/\alpha} \frac{f_g(U^{(n)}_j)}{\max_{h \in B_n}|f_h(U^{(n)}_j)|}\right)_{g \in B_n}, \label{series_repn_1}
\end{equation}
where
\begin{enumerate}
	\item
	``$\eqd$'' denotes equality of distribution,
	\item $b_n$'s are given by Equation \eqref{eqn:bn},
	\item \begin{equation*}
		\mathfrak{C}_\alpha =\left(\int_0^\infty x^{-\alpha}\sin x dx \right)^{-1}
		= \left\{
		\begin{array}{ll}
			\frac{1-\alpha}{\Gamma(2-\alpha)\cos(\pi \alpha/2)} & \mbox{if $\alpha \ne 1, $}\\
			\frac{2}{\pi}                                        & \mbox{if $\alpha =1$,}
		\end{array}
		\right.
	\end{equation*}
	\item $\varepsilon_j$'s are i.i.d. Bernoulli random variables taking $\pm 1$ values with equal probability,
	\item  $\{U^{(n)}_j: j \geq 1\}$ is an i.i.d. sequence of $\L$-valued random variables with common law given by
	\[
	\mathbb{P}(U^{(n)}_1 \in W)  = b_n^{-\alpha} \int_{W} \max_{h \in B_n}  |f_h(x)|^\alpha \mu(dx), \mbox{ and}
	\]
	\item for all $j \geq 1$, $\Gamma_j = E_1 + E_2 + \cdots E_j$ with $E_j$'s being i.i.d.\ exponential random variables with unit mean.
\end{enumerate}
Note that the right hand side of \eqref{series_repn_1} converges almost surely, and the equality of distribution can be verified for each linear combination of the two sides with the help of Theorem~1.4.2 of \cite{samorodnitsky:taqqu:1994}.

\begin{proof}[Sketch of Proof of Theorem~\ref{main_prob}] We split into two cases:
	
	\medskip
	
	\noindent {\bf Case 1: ECG is non-vanishing:}
	For simplicity, let us assume that $\lim_{n \to \infty} C_n$ exists (not just the limit inferior) and hence is positive. With this assumption, $b_n/V_n^{1/\alpha} = (A_n/V_n)^{1/\alpha}$ converges to a positive constant and as in the proof of Equation (4.9) in \cite{samorodnitsky:2004a} (see for instance the heuristics below), it follows that
	\begin{equation}
		\frac{M_n}{b_n} \convd c Z_\alpha \label{main_step_for_heu}
	\end{equation}
	for some $c > 0$. This completes a (sketch of a) proof of the last statement in Theorem~\ref{main_prob}.
	
	In the general case,  $\liminf_{n \to \infty} C_n$ exists and is positive. Hence given any subsequence of $\{C_n\}$, there is a further subsequence $\{C_{n_k}\}$ that converges to a positive limit. Therefore, applying the argument used above on this subsequence, we obtain \eqref{convdist_to_Fr}.  Now, by Theorem~3.1 of \cite{sarkar:roy:2016}   Case 1 follows. \\
	
	\medskip

	\noindent	{\bf  Case 2: ECG is vanishing:} The proof of \eqref{convprob_to_zero} relies on a comparison argument given in \cite{samorodnitsky:2004a} (see the proof of Equation (4.3) therein). As in Example 5.4 of \cite{samorodnitsky:2004a} and Example 6.1 of  \cite{sarkar:roy:2016}, we construct an auxiliary stationary $\sas$ random field $\{Y^\prime_g\}_{g \in G}$ with Rosi\'{n}ski representation given by
	\[
	Y^\prime_g \eqd \int_{S^\prime} c^\prime_g(x)\left(\frac{d\mu^\prime\circ\phi^\prime_g}{d\mu^\prime}(x)\right)^{1/\alpha} f^\prime \circ \phi^\prime_g(x) M^\prime(dx), \mbox{\ \ } g \in G,
	\]
	on a standard probability space $(\L^\prime, \mu^\prime)$ such that
	\begin{equation}
		a V_n^\epsilon \leq b^\prime_n :=\left( \int_{L^\prime} \max_{g \in B_n} |f^\prime_g(x)|^\alpha \mu^\prime(dx) \right)^{1/\alpha} =o(V_n^{1/\alpha}) \label{condn_on_Xprime}
	\end{equation}
	for some $a >0$ and $\epsilon \in (0, 1/\alpha)$. Without loss of generality, we may assume that $\L$ and $\L^\prime$ are disjoint sets.
	
	We consider the stationary $\sas$ random field
	\[
	Z_g = Y_g + Y^\prime_g, \;\; g \in G,
	\]
	which has a canonical Rosi\'{n}ski representation on $\L \cup \L^\prime$ with the action being $\{\phi_g\}$ restricted to $(\L, \mu)$ and $\{\phi^\prime_g\}$ restricted to $(\L^\prime, \mu^\prime)$. Therefore, the $\{b^Z_n\}$ sequence corresponding to $\{Z_g\}$ satisfies
	\[
	a V_n^\epsilon \leq b_n^\prime\leq b^Z_n = \left(b_n^\alpha + {b_n^\prime}^\alpha \right)^{1/\alpha}=o(V_n^{1/\alpha})
	\]
	because of vanishing of ECG and \eqref{condn_on_Xprime}. Using the inequality $b^Z_n \geq a |B_n|^\epsilon$, the series representation \eqref{series_repn_1} and the arguments given in the proof of (4.3) in \cite{samorodnitsky:2004a}, it follows that $M_n / b^Z_n$ is \emph{stochastically bounded} (also known as \emph{tight}), i.e., given any $\eta \in (0,1)$ there exists $K = K(\eta) >0$ such that
	\[
	\inf_{n \geq 1} \mathbb{P}\left(\bigg|\frac{M_n}{b^Z_n}\bigg| \leq K\right) > 1 - \eta.
	\]
	This, together with $b^Z_n = o(|B|_n^{1/\alpha})$, yields \eqref{convprob_to_zero}.
\end{proof}

\noindent \textbf{Heuristics and idea behind \eqref{main_step_for_heu}:}
Instead of rewriting in detail the proof of Equation (4.9) in \cite{samorodnitsky:2004a}, we provide the heuristics behind it. The main tool for verifying \eqref{main_step_for_heu} is, as expected, the series representation \eqref{series_repn_1} mentioned above.  The heuristics behind this are based on the \emph{one large jump principle}, which can be described as follows. It can be shown that
\[
\mathbb{P}\left( \left|b_n \mathfrak{C}_\alpha^{1/\alpha} \varepsilon_1 \Gamma_1^{-1/\alpha} \frac{f_g(U^{(n)}_1)}{\max_{h \in B_n}|f_h(U^{(n)}_1)|}\right|> \lambda\right) \sim c_0 \lambda^{-\alpha}
\]
for some $c_0 >0$ as $\lambda \to \infty$ whereas
\[
\mathbb{P}\left( \left|b_n \mathfrak{C}_\alpha^{1/\alpha} \sum_{j=2}^{\infty} \varepsilon_j \Gamma_j^{-1/\alpha} \frac{f_g(U^{(n)}_j)}{\max_{h \in B_n}|f_h(U^{(n)}_j)|}\right|> \lambda\right) =o(\lambda^{-\alpha}).
\]
See Pages 26-28 of \cite{samorodnitsky:taqqu:1994}. According to the discussion on Page 26 of this reference, the first term of \eqref{series_repn_1} is the dominating term that gives the precise asymptotics of its tail while the rest of the terms provide the ``necessary corrections'' for the whole sum to have an $\sas$ distribution.

In light of the above one large jump heuristics, we get that for all $\lambda >0$,
\begin{align*}
	\mathbb{P}\left(\frac{M_n}{b_n} > \lambda\right) &= \mathbb{P}\left(\max_{g \in B_n} \left|\mathfrak{C}_\alpha^{1/\alpha} \sum_{j=1}^\infty \varepsilon_j \Gamma_j^{-1/\alpha} \frac{f_g(U^{(n)}_j)}{\max_{h\in B_n} f_h(U^{(n)}_j)}\right|\, > \, \lambda\right),\\
	&\approx \mathbb{P}\left(\max_{g \in B_n} \left|\mathfrak{C}_\alpha^{1/\alpha}\varepsilon_1 \Gamma_1^{-1/\alpha} \frac{f_g(U^{(n)}_1)}{\max_{h \in B_n} f_h(U^{(n)}_1)}\right|\, > \, \lambda\right)\\
	&=\mathbb{P}(\mathfrak{C}_\alpha^{1/\alpha} \Gamma_1^{-1/\alpha} > \lambda) = 1 - e^{-\mathfrak{C}_\alpha \lambda^{-\alpha}}.
\end{align*}
This  computation yields \eqref{main_step_for_heu}. The key step (namely, the ``$\approx$'' above) can be made precise with the help of \eqref{series_repn_1} and the language of Poisson random measures; see Pages 1454 - 1455 of \cite{samorodnitsky:2004a} for details.

\begin{rmk}
	Theorem \ref{main_prob} above implies that $M_{n} / V_{n}^{1/\alpha}$ is \emph{stochastically bounded} (also known as \emph{tight}) and is ``bounded away from zero'' as long as ECG is non-vanishing.
\end{rmk}

\section{Patterson-Sullivan-Bowen-Margulis measures}\label{sec:psbm}
\subsection{Existence of Quasiconformal densities}\label{sec:ps} Let $(X,d)$ be a  proper geodesic metric space with base-point $o$ and equipped with
a  properly discontinuous isometric action of a group $G$.
\begin{defn}
	The {\bf 2-variable Poincar\'e series}   is the sum $$P_s(x,y) :=\sum_{g\in\G} e^{-s d(x,g(y))}.$$ For $x=y=o$, $P_s(o,o) = P(s)$ will simply be called the Poincar\'e series.
\end{defn}
In all the cases of interest in this paper, there exists $v >0$, called the {\bf critical exponent} such that for all $s>v$, the \poin series converges and for all $s<v$, the \poin series diverges.
\subsubsection{Patterson-Sullivan Measures for hyperbolic spaces}\label{sec:psm}
We refer the reader to  \cite{patterson-acta,sullivan-pihes,coornert-pjm} for the construction  of Patterson-Sullivan measures when $X$ is Gromov-hyperbolic.
The {\bf limit set} $\Lambda_G (\subset \partial X)$ of the  group $G$ acting on $X$ is the collection of accumulation points in $\partial G$ of a $G$-orbit $G.o$ for some (any) $o \in X$.
The group $G$ acts by homeomorphisms on $\Lambda_G$ given by $\phi_g(x)=g^{-1}\cdot x$. We shall represent this action as $g \longrightarrow \phi_g$. This is consistent with the action in Equation \ref{eq-anxi}: $\mu \circ \phi_g = \mu_{g^{-1}.o}$.

\begin{theorem}\label{coornaert}\cite{coornert-pjm}
	Let $(X,d)$ be a  proper Gromov-hyperbolic metric space equipped with
	a  properly discontinuous (not necessarily convex cocompact) isometric action of a group $G$. Then there exists a quasiconformal density of dimension $v$ (equal to the critical exponent) supported
	on the limit set $\L=\lag$. Also, $v = \limsup_n \frac{1}{n} \log  |B_n|$, where $B_n$ is as in Section \ref{subsec-ecg}.
\end{theorem}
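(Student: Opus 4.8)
The plan is to carry out the Patterson--Sullivan construction in the coarse (Gromov-hyperbolic) setting, keeping track of the hyperbolicity constant $\delta$ throughout so that the output is a \emph{quasiconformal} rather than a strictly conformal density. I would begin with the critical exponent and orbit growth. Since $X$ is proper the closed ball $\bbar{B(o,n)}$ is compact, and proper discontinuity makes $\{g\in G: g\,\bbar{B(o,n)}\cap\bbar{B(o,n)}\neq\emptyset\}$ finite; as $g.o\in B(o,n)$ forces $g.o\in g\,\bbar{B(o,n)}\cap\bbar{B(o,n)}$, the set $B_n$ is finite for every $n$. Splitting the Poincar\'e series $P(s)=\sum_{g\in G}e^{-sd(o,g.o)}$ by the integer part of $d(o,g.o)$ gives $e^{-s}\sum_{n\ge 0}(|B_{n+1}|-|B_n|)e^{-sn}\le P(s)\le\sum_{n\ge 0}(|B_{n+1}|-|B_n|)e^{-sn}$, and the Cauchy--Hadamard theorem for Dirichlet series with nonnegative coefficients identifies the abscissa of convergence of the bounding sums with $\limsup_n\frac1n\log|B_n|$. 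Hence the critical exponent equals $v=\limsup_n\frac1n\log|B_n|$ (finite in all cases of interest, in particular in Setup~\ref{setup}), which is the last assertion of the theorem.

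Next I would build the family of measures. For $s>v$ and $x\in X$ set $\mu_{s,x}=P(s)^{-1}\sum_{g\in G}e^{-sd(x,g.o)}\,\delta_{g.o}$, a Borel probability measure on the compact metrizable bordification $\bbar X=X\cup\partial X$. If $P(v)=\infty$ (divergence type) one simply lets $s\downarrow v$; if $P(v)<\infty$ (convergence type) one first applies Patterson's device, replacing $e^{-sd(x,g.o)}$ by $h(d(x,g.o))\,e^{-sd(x,g.o)}$ for a slowly increasing $h$ chosen so that the modified series diverges precisely at $s=v$ while leaving the critical exponent unchanged. In either case a diagonal argument over a countable dense set of basepoints produces $s_k\downarrow v$ along which $\mu_{s_k,x}$ converges weak-$*$, say to $\mu_x$, for all $x$ in that dense set. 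Since $P(s_k)\to\infty$, no mass is retained on any fixed orbit point in the limit, so each $\mu_x$ is carried by the set of accumulation points of $G.o$, i.e.\ by the limit set $\lag$; as the $\mu_{s_k,x}$ are probabilities, $\mu_x(\lag)$ is finite, and $\lag$ is the common support of all the $\mu_x$.

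It remains to verify quasiconformality, equivariance and continuity. For the approximants one has the exact identity $\frac{d\mu_{s,x}}{d\mu_{s,y}}(g.o)=e^{-s(d(x,g.o)-d(y,g.o))}$, an exponential of a difference of distances to $g.o$; by $\delta$-hyperbolicity this difference, as $g.o\to\xi\in\partial X$, converges to the Busemann cocycle of \eqref{eq:buse} up to an additive error $O(\delta)$. Passing to the limit $s_k\downarrow v$ therefore yields, for all $x,y\in X$ and $\xi\in\lag$,
\[
\frac1C\,e^{-v\beta_\xi(y,x)}\ \le\ \frac{d\mu_x}{d\mu_y}(\xi)\ \le\ C\,e^{-v\beta_\xi(y,x)}
\]
with $C=C(\delta)\ge1$, which is exactly \eqref{eq-qc} with dimension $v$. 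Equivariance is automatic from $d(\gamma.x,g.o)=d(x,\gamma^{-1}g.o)$: reindexing the orbit sum gives $\gamma_*\mu_{s,x}=\mu_{s,\gamma.x}$ for every $\gamma\in G$, hence $\gamma_*\mu_x=\mu_{\gamma.x}$ in the limit, so in particular $\mu_{\gamma.o}:=\gamma_*\mu_o$ is well defined. Continuity of $x\mapsto\mu_x$ (needed to promote the construction from a dense set of basepoints to all of $X$, as required by Definition~\ref{qcdens}) follows from the uniform-in-$s$ bound $e^{-s\,d(x,x')}\le\frac{d\mu_{s,x}}{d\mu_{s,x'}}\le e^{s\,d(x,x')}$, which survives the limit and gives equicontinuity of the family.

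I expect the main obstacles to be two. First, the convergence-type case: one must exhibit Patterson's slowly varying $h$ so that the corrected Poincar\'e series diverges exactly at $s=v$ without shifting the critical exponent, and check that the extra factors do not spoil the $O(\delta)$ control of the Radon--Nikodym cocycle. Second, the passage from weak-$*$ subsequential limits along a dense set of basepoints to a genuinely continuous $G$-equivariant map $X\to M(\partial X)$. Everything else is the classical Patterson--Sullivan argument; the coarse geometry enters only through the additive $O(\delta)$ defect in the Busemann function, and that defect is exactly why the theorem yields a quasiconformal, rather than conformal, density --- for $X$ CAT$(-1)$ the defect vanishes and one recovers a genuine conformal density in the sense of Definition~\ref{qcdens}.
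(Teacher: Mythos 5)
The paper does not prove Theorem~\ref{coornaert}: it is cited from Coornaert \cite{coornert-pjm}, with only a one-paragraph gloss (weak limits of normalized orbit measures, Patterson's auxiliary weight in the convergence case) given afterward. Your sketch is an accurate outline of exactly that construction in the Gromov-hyperbolic setting --- Poincar\'e series, Patterson's trick, weak-$*$ limits supported on $\lag$, the $O(\delta)$ defect turning the conformal cocycle into a quasiconformal one, and the Cauchy--Hadamard identification of the critical exponent with $\limsup_n\frac1n\log|B_n|$ --- so it matches the cited result rather than offering an alternative route. The one point you leave implicit is that the resulting density has support \emph{equal to} (not merely contained in) $\lag$; this needs the Sullivan shadow lemma (or quasi-invariance plus minimality of the boundary action in the non-elementary case), but it is standard and does not affect the overall correctness of the argument.
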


The quasiconformal density constructed by Coornaert in Theorem \ref{coornaert} is called the {\bf Patterson-Sullivan density}. When $P_s(x,y)$ diverges, the
{\bf Patterson-Sullivan measure} based at $o$ is obtained as a weak limit of the measures $$\sum_{g \in B_n} e^{-sd(x,g.y)}\dirac_{g.y}$$ normalized by $P_s(x,y)$ (see \cite{calegari-notes,cm-gafa} for details).
When $P_G(s)$ converges, an extra weighting function is introduced in front of the
exponential factors to force the modified $P_s(x,y)$ to diverge \cite{patterson-acta}. Note that $V_n$ (Equation \ref{eq-vn}) can be identified with the {\it volume growth} of balls of radius $n$ in the weak  hull $CH(\L)$ of $\L$ in $X$, where  $CH(\L)$ consists of the union of geodesics with end-point in $\L$.

\subsubsection{Patterson-Sullivan Measures for symmetric spaces of higher rank}\label{sec:psmss} In this subsection, $X$ will be a  symmetric space of noncompact type and $G$ a lattice. The visual or geometric boundary will be denoted as $\partial X$, while the Furstenberg boundary will be denoted as $\partial_F X$. The critical exponent of the Poincar\'e series is denoted by $v$ as before. The Furstenberg boundary  $\partial_F X$ can be naturally identified with the orbit of the centroid of a Weyl chamber in $\partial X$. Thus, $\partial_F X \subset \partial X$. As before, the action of $g$ on $\partial_F X$ will be denoted by $g \to \phi_g$.
Albuquerque shows (see Definition \ref{qcdens}):

\begin{theorem} \cite{albgafa}\label{alb} For $(X,G)$ as above, there exists a unique {\bf conformal} density given by the Patterson-Sullivan measure class $\{\mu_\xi\}$ supported on $\partial_F X \subset \partial X$.
\end{theorem}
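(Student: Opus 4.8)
The plan is to run the Patterson--Sullivan construction in the higher-rank setting, following Albuquerque, in three stages: (i) build a $G$-invariant conformal density of dimension equal to the critical exponent $v$ on the visual boundary $\partial X$; (ii) show that this density is carried by the Furstenberg boundary $\partial_F X \subset \partial X$; and (iii) deduce uniqueness from a higher-rank shadow lemma together with ergodicity of the boundary action. For step (i), fix the basepoint $o$ and, for $s > v$, form the probability measures on $\bbar X = X \cup \partial X$ given by
\[
\mu_{x,s} \;=\; \frac{1}{P_s(o,o)} \sum_{g \in G} e^{-s\, d(x,\, g.o)}\, \delta_{g.o}.
\]
If $P_s(o,o)\to\infty$ as $s \downarrow v$, any weak-$\ast$ subsequential limit $\mu_x$ of the $\mu_{x,s}$ is supported on $\partial X$, since the mass placed on any fixed finite subset of $G$ tends to $0$. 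If instead the Poincaré series converges at $s=v$, one inserts a Patterson slowly-varying weight $h$ (with $h(t+a)/h(t)\to 1$ for each $a$) in front of $e^{-s d(o,g.o)}$, chosen so that the modified series diverges at $v$, and repeats. The approximating weights $e^{-s d(x,g.o)}$ and $e^{-s d(y,g.o)}$ have ratio $e^{-s(d(x,g.o)-d(y,g.o))}$, and $d(x,g.o)-d(y,g.o)$ converges to the Busemann cocycle as $g.o\to\xi\in\partial X$ (an honest limit here since $X$ is CAT(0)); standard estimates then show that $\{\mu_x\}$ is continuous, $G$-equivariant, and satisfies the conformal transformation rule of Definition~\ref{qcdens}, i.e.\ it is a conformal density of dimension $v$.

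Step (ii) is where higher rank genuinely enters. Since $G$ is a lattice it is Zariski dense in $\mathcal{G}$, and a counting estimate for the Cartan projections of the orbit $G.o$ --- the volume form on $\mathcal{G}$ in Cartan coordinates has a density whose exponential growth rate on spheres is attained at the single regular direction dual to $2\rho$ --- shows that within $B_n$ the proportion of $g$ for which $g.o$ points within $\epsilon$ of a wall of the Weyl chamber is negligible as $n\to\infty$. Hence the limiting measures $\mu_x$ give full mass to the regular part of $\partial X$; composing with the $G$-equivariant fibration of the regular boundary onto $\partial_F X$ (equivalently, noting that the relevant $K$-orbit inside $\partial X$ is exactly $\partial_F X = K/M$) realizes $\{\mu_x\}$ as a conformal density supported on $\partial_F X$. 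Since $G$ is a lattice its limit set is all of $\partial_F X$, so the support equals $\partial_F X$.

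For step (iii), let $\{\mu_x\}$ and $\{\mu'_x\}$ be two conformal densities of dimension $v$ on $\partial_F X$. A higher-rank shadow lemma --- for $R$ large, the $\mu_o$-mass of the shadow seen from $o$ of the ball $B(g.o,R)$ is comparable to $e^{-v\,d(o,g.o)}$, uniformly in $g \in G$ --- holds for any such density, and forces $\mu_o$ and $\mu'_o$ to be mutually absolutely continuous with Radon--Nikodym derivative bounded above and below. Because both densities transform identically under $G$, the function $\psi := d\mu'_o/d\mu_o$ is $G$-invariant: $\psi(\phi_g\xi) = \psi(\xi)$ for $\mu_o$-a.e.\ $\xi$ and every $g \in G$. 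The $G$-action on $(\partial_F X,\mu_o)$ is ergodic --- for a lattice this is a consequence of Moore's ergodicity theorem, since the minimal parabolic $P$ with $\partial_F X = \mathcal{G}/P$ contains the non-compact Cartan subgroup $A$, whose action on $G\backslash\mathcal{G}$ is ergodic --- so $\psi$ is $\mu_o$-a.e.\ a constant. Normalizing total masses gives $\mu'_o = \mu_o$, and then $\mu'_x = \mu_x$ for all $x \in X$ by the conformality relation.

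The main obstacle is step (ii) together with the shadow lemma in step (iii): both require honest higher-rank geometry --- the fine structure of the Cartan decomposition, the dichotomy between regular and singular geodesics, and the geometry of the Furstenberg boundary --- in place of the soft hyperbolic arguments that suffice in rank one (as in Coornaert's Theorem~\ref{coornaert}). By contrast, the ergodicity input is standard (Howe--Moore / Moore), and the construction in step (i) is routine.
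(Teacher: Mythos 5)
The paper gives no proof of Theorem~\ref{alb}; it is stated purely as a citation of Albuquerque \cite{albgafa}. Your three-stage reconstruction (Patterson construction for existence, concentration on the $K$-orbit $\partial_F X = K/M$ via $2\rho$-dominance of the Cartan volume density, uniqueness via a higher-rank shadow lemma plus Howe--Moore ergodicity of the lattice action on $\GG/P$) is a faithful sketch of the argument in that reference, so there is nothing in the paper to compare against beyond noting agreement.
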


\subsubsection{Thurston Measure for Teichm\"uller space}\label{sec:psmteich}
In this subsection, $X$ will denote the Teichm\"uller space $Teich(S)$ of a surface and $G=MCG(S)$ its mapping class group. The Thurston boundary, or equivalently, the space $\PMF (S)$ of projectivized measured foliations, will be denoted as $\partial X$. Let $\xi \in \partial X$ be a measured foliation. Let ${ Ext}_{\xi}(x)$ denote the extremal length at $x \in X$ of a measured foliation $\xi$.
The Thurston measure on the space of measured foliations $\MF(S)$ is denoted as $\mu$. For $\xi \in \MF(S)$, $[\xi]$ will denote its  image  in $\PMF(S)$. For $U \subset \PMF (S)$, the authors of \cite{abem} define a measure $\mu_x$ with base-point $x \in X$ as follows:
$$\mu_x(U)=\mu(\{\xi\, \vert \, [\xi]\in U\, , Ext_{\xi}(x)\leq 1\}).$$
Further, \cite[p. 1064]{abem}
$$
\frac{d\mu_x}{d\mu_y}([\xi])=\left(\frac{\sqrt{{ Ext}_{\xi}(y)}}{\sqrt{{ Ext}_{\xi}(x)}}\right)^{6g-6}.
$$

For $[\xi] \in \PMF (S)$, a Busemann-like
cocycle $\beta_{\xi}: Teich(S) \times Teich(S) \rightarrow \reals$  for the Teichm\"uller metric is defined as follows:
\[\beta_{\xi}(x,y)= \log\left(\frac{\sqrt{Ext_{\xi}(x)}}{\sqrt{Ext_{\xi}(y)}}\right).\]
This makes the family
$\{\mu_x\}_{x \in X} $ of probability measures on $\PMF(S)$ into a
family of $G-$invariant conformal densities of dimension $v=dim(X)$ for the cocycle
$\beta$:

\begin{theorem} \cite{abem}\label{abem} Let $(X,G), \mu_x, v$ be as above.
	For all $x, y \in X$ and $\mu_x-$almost every $\xi \in \partial X$,
	\begin{equation}
	\frac{d \mu_x}{d\mu_y} ([\xi]) = exp(v \,  \beta_\xi \, (y, x)).
	\end{equation}
	Further, for $g \in G$, $U \subset \PMF(S)$
	$$ \mu_{g.x }(g \cdot U)= \mu_x(U).$$
\end{theorem}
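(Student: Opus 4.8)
The statement has two parts, and given what has just been recalled about extremal length and the Thurston measure $\mu$ on $\MF(S)$, both follow quickly. The plan for the Radon--Nikodym identity is simply to match definitions. From \cite{abem} we have recorded
$$\frac{d\mu_x}{d\mu_y}([\xi]) = \left(\frac{\sqrt{Ext_\xi(y)}}{\sqrt{Ext_\xi(x)}}\right)^{6g-6},$$
while by definition $\beta_\xi(y,x) = \log\!\left(\sqrt{Ext_\xi(y)}/\sqrt{Ext_\xi(x)}\right)$ and $v = \dim Teich(S) = 6g-6$; hence $\exp\!\left(v\,\beta_\xi(y,x)\right)$ is literally the right-hand side above. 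Since $0 < Ext_\xi(x) < \infty$ for every nonzero measured foliation $\xi$ and every $x \in Teich(S)$, this in fact holds for every $[\xi] \in \PMF(S)$, so the $\mu_x$--a.e.\ qualifier is harmless and, in particular, all the $\mu_x$ lie in a single measure class, as already asserted. The only additional point needed to see that $\{\mu_x\}$ is a $G$--invariant conformal density of dimension $v$ in the sense of \defref{qcdens} is the cocycle relation $\beta_\xi(x,z) = \beta_\xi(x,y) + \beta_\xi(y,z)$, immediate from additivity of $\log$, together with the normalization $\beta_\xi(o,o) = 0$.

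For the equivariance $\mu_{g.x}(g\cdot U) = \mu_x(U)$ I would invoke two standard facts about the $MCG(S)$--action. First, extremal length is natural: $Ext_{g\xi}(g\cdot x) = Ext_\xi(x)$ for all $g \in MCG(S)$, $\xi \in \MF(S)$, $x \in Teich(S)$, because precomposing a marked Riemann surface with a mapping class is a conformal operation and extremal length is a conformal invariant. Second, the Thurston measure $\mu$ on $\MF(S)$ is $MCG(S)$--invariant (in Dehn--Thurston/train-track coordinates it is Lebesgue measure, and the coordinate changes induced by mapping classes are integral and unimodular). Then, using $\mu_x(U) = \mu(\{\xi : [\xi]\in U,\ Ext_\xi(x)\le 1\})$ and the change of variables $\eta = g\xi$ (noting $[g\xi]\in g\cdot U \iff [\xi]\in U$),
\begin{align*}
\mu_{g.x}(g\cdot U) &= \mu\bigl(\{\eta : [\eta]\in g\cdot U,\ Ext_\eta(g\cdot x)\le 1\}\bigr)\\
&= \mu\bigl(g\cdot\{\xi : [\xi]\in U,\ Ext_{g\xi}(g\cdot x)\le 1\}\bigr)\\
&= \mu\bigl(g\cdot\{\xi : [\xi]\in U,\ Ext_\xi(x)\le 1\}\bigr) = \mu_x(U),
\end{align*}
where the first two equalities are the definition of $\mu_{g.x}$ and the change of variables, naturality of extremal length gives the third, and $MCG(S)$--invariance of $\mu$ gives the last.

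Since the genuinely analytic input --- existence of the measures $\mu_x$, the Radon--Nikodym formula, and finiteness of $\mu(\{\xi : Ext_\xi(x)\le 1\})$ (so that the $\mu_x$ are probability measures after normalization) --- is all quoted from \cite{abem}, there is no real obstacle here: the content of the theorem, in the framing of this paper, is precisely that the ABEM construction yields a $G$--invariant conformal density of dimension $v = 6g-6$ for the Busemann-type cocycle $\beta$. The one thing to watch is bookkeeping of conventions --- that the order of arguments in $\beta_\xi(y,x)$ is paired correctly with the direction of the derivative $d\mu_x/d\mu_y$, and that naturality of extremal length and $MCG(S)$--invariance of $\mu$ are applied in the right order in the change-of-variables computation.
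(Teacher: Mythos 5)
Your proposal is correct. The paper itself gives no proof of Theorem~\ref{abem} --- it is a citation to \cite{abem}, with the two ingredient formulas (the Radon--Nikodym identity in terms of extremal length, and the definition of the Busemann-type cocycle $\beta_\xi$) recalled in Section~\ref{sec:psmteich} immediately before the statement. Your verification of the first claim is indeed literal bookkeeping: $\exp(v\,\beta_\xi(y,x)) = (\sqrt{Ext_\xi(y)}/\sqrt{Ext_\xi(x)})^{6g-6}$ with $v=\dim Teich(S)=6g-6$, which is exactly the displayed derivative from \cite[p.~1064]{abem}, and you are right that it holds for every $[\xi]$, not merely a.e., since extremal length is finite and positive. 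Your proof of equivariance --- the change of variables $\eta = g\xi$ combined with naturality $Ext_{g\xi}(g\cdot x) = Ext_\xi(x)$ and $MCG(S)$-invariance of the Thurston measure $\mu$ --- is the standard argument and is carried out correctly, including the order in which the two invariances are applied. The only small point worth flagging is that your parenthetical remark connecting this to \defref{qcdens} should be read with care about sign conventions: the Teichm\"uller cocycle $\beta_\xi$ of Section~\ref{sec:psmteich} and the Busemann cocycle of Equation~\eqref{eq:buse} enter \defref{qcdens} and Theorem~\ref{abem} with opposite signs as written in the paper, so matching the two requires noting this; but this does not affect your verification of the theorem as stated from the recalled formulas.
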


We shall refer to any $\mu_x$ above as a {\bf Thurston conformal density}.

\begin{rmk}\label{th=ps}
	As observed at the end of Section 2.3 of \cite{abem}, it follows from
	\cite[Theorem 2.9]{abem} that the family of measures $\mu_x$ give a Patterson-Sullivan density on $\partial X$.
\end{rmk}

\subsection{Bowen-Margulis measures}\label{baderfurman} We shall recall the construction of Bowen-Margulis measures by Furman \cite{furman-coarse} and Bader-Furman \cite{bader-furman} for $X$ Gromov-hyperbolic and $G$ acting properly discontinuously by isometries on it. This is slightly more general than what we need in most of the applications.
We will apply it in particular to CAT(-1) spaces (see \cite{roblin-memo} for an excellent treatment in the latter context).
Let $[\mu]$ be  the Patterson-Sullivan measure class of Theorem \ref{coornaert}. The square class $[\mu\times\mu]$ is supported on
$
\lagg := \{ (x,y) \in (\lag \times \lag)\vert x \neq y \}.
$
The authors of  \cite{bader-furman} state the Proposition below in the context of cocompact group actions but the proof goes through in the general case.

Let $\langle{x},{y}\rangle_{o}$ denote the Gromov inner product.
\begin{prop}\label{coarseBMS}\cite[Proposition 1]{furman-coarse}\cite[Proposition 3.3]{bader-furman}
	There exists a $G-$invariant Radon measure, denoted $\mubms$, in the measure class $[\mu\times \mu]$
	on $\lagg$. Moreover, $\mubms$ has the form
	\[
	d\mubms(x,y)=e^{F(x,y)}\,d\mu(x)\,d\mu(y)
	\]
	where $F$ is a measurable function on $(\lagg,[\mu\times\mu])$ of the form 	
	$
	F(x,y)=2v\, \langle{x},{y}\rangle_{o}+O(1).
	$
\end{prop}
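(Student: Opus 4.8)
The plan is to write down the Bowen--Margulis--Sullivan measure on the space of geodesics explicitly, following \cite{furman-coarse,bader-furman} (and \cite{roblin-memo} in the CAT$(-1)$ case; cocompactness is never used), and then verify the three assertions separately: the Radon property, $G$-invariance, and the form of the density $F$. First I would fix a Borel representative of the Gromov inner product $\langle\xi,\eta\rangle_o$ on $\lagg$; it is finite there, bounded on compact subsets of $\lagg$ (which stay away from the diagonal), and replacing it by another representative changes it by a bounded amount. With $\Phi(\xi,\eta):=e^{2v\langle\xi,\eta\rangle_o}$, take as \emph{candidate}
\[
\nu_0\ :=\ \Phi\cdot(\mu\times\mu)\qquad\text{on }\lagg ,
\]
where $\mu=\mu_o$ is the Patterson--Sullivan measure of Theorem~\ref{coornaert}. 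Since $\mu$ is finite and $\Phi$ is bounded on compacta of $\lagg$, $\nu_0$ is a Radon measure on $\lagg$ lying in the class $[\mu\times\mu]$; and any $G$-invariant measure of the form $\Psi\cdot\nu_0$ with $\Psi$ bounded above and below will again be Radon and in $[\mu\times\mu]$.

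\textbf{Step 1: $\nu_0$ is $G$-quasi-invariant with uniformly bounded cocycle.} For $g\in G$ let $\rho_g:=\dfrac{d\,g_\ast\nu_0}{d\nu_0}$, the push-forward being under the diagonal boundary action. I would bound $\rho_g$ uniformly in $g$ using two inputs. First, $G$-equivariance of the density gives $g_\ast\mu=\mu_{g.o}$, so by Definition~\ref{qcdens},
\[
\frac{d(g_\ast\mu\times g_\ast\mu)}{d(\mu\times\mu)}(\xi,\eta)\ \asymp\ \exp\!\bigl(-v\beta_\xi(o,g.o)-v\beta_\eta(o,g.o)\bigr),
\]
with multiplicative constant depending only on the quasiconformality constant $C$. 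Second, the isometry identity $\langle g^{-1}\xi,g^{-1}\eta\rangle_o=\langle\xi,\eta\rangle_{g.o}$ together with the standard $\delta$-hyperbolic relation between Gromov products at the two basepoints $o$ and $g.o$ and the Busemann cocycles, namely $2\langle\xi,\eta\rangle_{g.o}=2\langle\xi,\eta\rangle_o+\beta_\xi(o,g.o)+\beta_\eta(o,g.o)+O(\delta)$ with error uniform in $g,\xi,\eta$, gives $\Phi\circ g^{-1}\asymp\Phi\cdot\exp\!\bigl(v\beta_\xi(o,g.o)+v\beta_\eta(o,g.o)\bigr)$. Multiplying, the Busemann factors cancel and
\[
\rho_g(\xi,\eta)\ =\ \frac{(\Phi\circ g^{-1})(\xi,\eta)}{\Phi(\xi,\eta)}\cdot\frac{d(g_\ast\mu\times g_\ast\mu)}{d(\mu\times\mu)}(\xi,\eta)\ \asymp\ 1 ,
\]
i.e.\ there is $C_0=C_0(C,\delta)\ge1$, \emph{independent of $g$}, with $C_0^{-1}\le\rho_g\le C_0$ a.e.

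\textbf{Step 2: from a bounded cocycle to an honestly invariant density.} Since $\rho$ is a multiplicative cocycle, $\rho_{gh}(\omega)=\rho_g(\omega)\,\rho_h(g^{-1}\omega)$, and $G$ is countable, the function $\Psi(\omega):=\sup_{g\in G}\rho_g(\omega)$ is Borel with $1=\rho_e\le\Psi\le C_0$ a.e.; and for every $h\in G$,
\[
\Psi(h^{-1}\omega)\,\rho_h(\omega)\ =\ \sup_{g\in G}\bigl(\rho_g(h^{-1}\omega)\,\rho_h(\omega)\bigr)\ =\ \sup_{g\in G}\rho_{hg}(\omega)\ =\ \Psi(\omega).
\]
Since $\dfrac{d\,h_\ast(\Psi\nu_0)}{d\nu_0}(\omega)=\Psi(h^{-1}\omega)\,\rho_h(\omega)$, this says precisely that $\mubms:=\Psi\cdot\nu_0$ is $G$-invariant. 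It is Radon and lies in $[\mu\times\mu]$ because $\Psi$ is bounded above and below, and
\[
d\mubms(\xi,\eta)\ =\ \Psi(\xi,\eta)\,e^{2v\langle\xi,\eta\rangle_o}\,d\mu(\xi)\,d\mu(\eta)\ =\ e^{F(\xi,\eta)}\,d\mu(\xi)\,d\mu(\eta)
\]
with $F:=2v\langle\cdot,\cdot\rangle_o+\log\Psi$; since $0\le\log\Psi\le\log C_0$ this is exactly $F(\xi,\eta)=2v\langle\xi,\eta\rangle_o+O(1)$.

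\textbf{Where the work is.} Everything except Step~1 is soft: the Radon property, the cocycle relation for $\rho$, the ``supremum trick'' that turns a bounded cocycle into an invariant density, and reading off the form of $F$. The real content is the uniform-in-$g$ coarse cocycle identity $2\langle\xi,\eta\rangle_{g.o}=2\langle\xi,\eta\rangle_o+\beta_\xi(o,g.o)+\beta_\eta(o,g.o)+O(\delta)$ — this is where $\delta$-hyperbolicity (rather than measure theory) enters, and the hard part will be checking that the error in this identity, and in the isometry identity for the boundary Gromov product, is genuinely bounded independently of $g$, since the boundary Gromov product is itself only well-defined up to $O(\delta)$. In the CAT$(-1)$ setting this identity is exact, so $\rho_g\equiv1$, $\Psi\equiv1$ and $\mubms=\nu_0$ on the nose, recovering Roblin's measure \cite{roblin-memo}; in the general Gromov-hyperbolic case one extracts it from standard hyperbolic-geometry estimates, or quotes it from \cite{furman-coarse,bader-furman}, and it is exactly this slack that produces the $O(1)$ in $F$.
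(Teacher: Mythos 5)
The paper itself does not prove this Proposition; it cites \cite{furman-coarse} and \cite{bader-furman} and remarks only that the argument extends beyond the cocompact case. So I am judging your reconstruction on its own terms. The overall architecture is right and is in the spirit of the references: take $\nu_0=\Phi\cdot(\mu\times\mu)$ with $\Phi=e^{2v\langle\cdot,\cdot\rangle_o}$, show the Radon--Nikodym cocycle $\rho_g=\frac{d\,g_\ast\nu_0}{d\nu_0}$ is bounded above and below uniformly in $g$ (this is where $\delta$-hyperbolicity and quasiconformality enter), and then deploy the supremum trick $\Psi=\sup_{g}\rho_g$ to produce a genuinely $G$-invariant density $\mubms=\Psi\nu_0$; the cocycle computation $\Psi(h^{-1}\omega)\rho_h(\omega)=\sup_g\rho_{hg}(\omega)=\Psi(\omega)$ is correct, and Radonness and membership in $[\mu\times\mu]$ follow from the two-sided bounds on $\Phi$ on compacta of $\lagg$ and on $\Psi$. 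You have also correctly isolated the real content (the uniform coarse cocycle identity) and correctly noted that nothing in the argument uses cocompactness.

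One thing to fix: your change-of-basepoint formula has the wrong sign. With the paper's convention $\beta_\xi(p,q)=\limsup_{z\to\xi}\bigl(d(p,z)-d(q,z)\bigr)$, the identity is
\[
2\langle\xi,\eta\rangle_{g.o}=2\langle\xi,\eta\rangle_o+\beta_\xi(g.o,o)+\beta_\eta(g.o,o)+O(\delta),
\]
not with $\beta_\xi(o,g.o)$ as you wrote (the two differ by a sign, up to $O(\delta)$). With the correct sign one gets $\Phi\circ g^{-1}\asymp\Phi\cdot\exp\bigl(-v\beta_\xi(o,g.o)-v\beta_\eta(o,g.o)\bigr)$, and the cancellation in $\rho_g$ then requires $\frac{d(g_\ast\mu\times g_\ast\mu)}{d(\mu\times\mu)}\asymp\exp\bigl(+v\beta_\xi(o,g.o)+v\beta_\eta(o,g.o)\bigr)$. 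That is indeed what the Patterson--Sullivan construction gives (the ratio of weights $e^{-s\,d(g.o,z)}/e^{-s\,d(o,z)}$ limits to $e^{-v\beta_\xi(g.o,o)}=e^{+v\beta_\xi(o,g.o)}$), but it disagrees with the formula $\frac{d\mu_x}{d\mu_o}(\xi)=\exp(-v\beta_\xi(o,x))$ printed in Definition~\ref{qcdens} of the paper, which appears to have its two arguments transposed. In effect your sign slip in the Gromov-product identity exactly compensates a sign slip in the paper's stated density formula, so your final conclusion $\rho_g\asymp1$ is right; but as written the intermediate steps are not individually correct, and you should state the basepoint-change identity with $\beta_\xi(g.o,o)$ and use the PS form $\frac{d\,g_\ast\mu}{d\mu}\asymp\exp(-v\beta_\xi(g.o,o))$ so that the cancellation is transparent and sign-consistent.
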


Let $\LL$ denote the Lebesgue
measure on $\R$.
Bader and Furman extend the ergodic $G-$action on $(\lagg,\mubms)$ to a $G-$action on $(\lagg\times \R, \mubms\times \LL)$ as follows. Let $\Phi^\R$ denote the $\R-$action on $\lagg\times \R$ given by $\Phi^s(x,y,t)=(x,y,t+s)$.

\begin{prop}\cite[Proposition 3.5]{bader-furman}\label{almost2exact}
	The $G-$action on $(\lagg,\mubms)$ of Proposition \ref{coarseBMS}  extends to
	a $G-$action on $(\lagg\times \R, \mubms\times \LL)$ given by $(x,y,t) \rightarrow g \cdot (x,y,t)$ satisfying the following:
	\begin{enumerate}
		\item $G$ preserves the infinite measure $\mubms\times \LL$.
		\item The $G-$action commutes with the $\Phi^\R$-action.
		\item The $G-$action commutes with the flip: $(x,y,t)\mapsto (y,x,-t)$.
	\end{enumerate}
\end{prop}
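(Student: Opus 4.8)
The plan is to obtain the extension by adjoining an additive real-valued cocycle to the measure-preserving $G$-action on $(\lagg,\mubms)$ furnished by \propref{coarseBMS}; once the correct cocycle has been identified, the three asserted properties reduce to short verifications. First I would write down the geometric candidate for the new $\R$-coordinate, so as to see what the cocycle must be. Viewing a point of $\lagg$ as the bi-infinite geodesic joining $x$ to $y$, the natural position along it relative to the basepoint $o$ is $\tfrac{1}{2}(\beta_x(o,z)-\beta_y(o,z))$ for any point $z$ of that geodesic; it is independent of the choice of $z$ along the geodesic and varies with unit speed. Under the action $(x,y)\mapsto(gx,gy)$ this position is shifted by
\[
\sigma(g,x,y):=\tfrac{1}{2}(\beta_{gx}(o,g.o)-\beta_{gy}(o,g.o))=\tfrac{1}{2}(\beta_y(o,g^{-1}.o)-\beta_x(o,g^{-1}.o)),
\]
by $G$-equivariance of Busemann functions. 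This $\sigma$ is antisymmetric under interchanging $x$ and $y$ --- exactly what commutation with the flip will require --- but, since $\mu$ is only a \emph{quasiconformal} density (\defref{qcdens}), the Busemann functions are coarsely canonical only, so $\sigma$ satisfies the cocycle identity merely up to a bounded error. The heart of the argument is to replace $\sigma$ by a genuine cocycle at bounded distance from it.

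For that I would use the \emph{strict} Radon-Nikodym cocycle of $\mu$ on the boundary: set $b(g,\xi):=\log(\tfrac{d\mu_{g.o}}{d\mu_o}(\xi))$, which is a genuine, $\mu$-a.e.\ defined, jointly measurable additive cocycle over the action $(x,y)\mapsto(gx,gy)$ --- the chain rule for Radon-Nikodym derivatives together with the equivariance $\mu_{gx}=g_\ast\mu_x$ of \defref{qcdens} gives $b(gh,\xi)=b(g,\xi)+b(h,g^{-1}\xi)$ --- and which by \defref{qcdens} satisfies $b(g,\xi)=-v\,\beta_\xi(o,g.o)+O(1)$. Define
\[
\tau(g,x,y):=\tfrac{1}{2v}(b(g^{-1},x)-b(g^{-1},y)),\qquad g\cdot(x,y,t):=(gx,\ gy,\ t+\tau(g,x,y)).
\]
The $g^{-1}$ and the relative sign are forced by the displayed form of the chain rule, and make $\tau$ a strict cocycle satisfying $\tau(gh,x,y)=\tau(h,x,y)+\tau(g,hx,hy)$ with $\tau(e,\cdot)=0$; hence $g\cdot(x,y,t)$ as above is a genuine $G$-action on $\lagg\times\R$. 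By construction $\tau$ is antisymmetric in $(x,y)$, and $|\tau(g,x,y)-\sigma(g,x,y)|$ is uniformly bounded, so this action lies at bounded distance from the geometric almost-action.

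The three properties then follow at once. Commutation with $\Phi^\R$ is automatic because $\tau(g,x,y)$ does not involve $t$: indeed $g\cdot\Phi^s(x,y,t)=(gx,gy,t+s+\tau(g,x,y))=\Phi^s(g\cdot(x,y,t))$. Commutation with the flip $j:(x,y,t)\mapsto(y,x,-t)$ follows from antisymmetry: $g\cdot j(x,y,t)=(gy,gx,-t+\tau(g,y,x))=(gy,gx,-t-\tau(g,x,y))=j(g\cdot(x,y,t))$. Invariance of the infinite measure $\mubms\times\LL$: by Tonelli, integrating $f(gx,gy,t+\tau(g,x,y))$ first in $t$ (a translation of $\R$, which preserves $\LL$) and then in $(x,y)$ (where $(x,y)\mapsto(gx,gy)$ preserves $\mubms$ by \propref{coarseBMS}) returns $\int f\,d(\mubms\times\LL)$; this is also visible from the defining density $d\mubms=e^{F}\,d\mu\,d\mu$ of \propref{coarseBMS}, where the two boundary Jacobians, the $\R$-translation Jacobian, and the transformation of the $O(1)$ function $F$ cancel.

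The main obstacle is exactly the passage from $\sigma$ to $\tau$: one must verify that, although $\mu$ is only quasiconformal, the Radon-Nikodym cocycle $b$ --- which does obey the cocycle identity on the nose --- differs from the Busemann quasi-cocycle only by a uniformly bounded amount, so that the resulting honest $G$-action is also the geometrically meaningful one (the one at bounded distance from the almost-action). In the conformal cases --- Teichm\"uller space and higher-rank symmetric spaces (\thmref{abem}, \thmref{alb}) --- this is automatic, since there $b(g,\xi)=-v\,\beta_\xi(o,g.o)$ holds exactly; in the Gromov-hyperbolic case one must carry along the bounded errors in \defref{qcdens} and the $O(1)$ in the expression for $F$ in \propref{coarseBMS}, and perform the usual null-set bookkeeping, so that after discarding a $\mubms\times\LL$-null set the cocycle relations --- and hence the group action --- hold everywhere.
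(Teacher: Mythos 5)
Your proof is correct, and it follows the same basic route as the result you are citing. The paper itself does not prove this proposition but quotes it from Bader--Furman, where the extension is likewise built by adjoining a genuine additive real cocycle to the measure-preserving $G$-action on $(\lagg,\mubms)$. Your key move --- replacing the coarsely-defined Busemann quasi-cocycle $\sigma$ by the strict log Radon--Nikodym cocycle $b(g,\xi)=\log\bigl(\tfrac{d\mu_{g.o}}{d\mu_o}(\xi)\bigr)$, forming $\tau(g,x,y)=\tfrac{1}{2v}(b(g^{-1},x)-b(g^{-1},y))$, and checking the cocycle identity, the antisymmetry in $(x,y)$, and the $\mubms\times\LL$-invariance by Tonelli --- is exactly the construction needed, and you correctly pinpoint the one delicate issue (that $b$ is a strict cocycle and only differs from $-v\beta_\xi(o,g.o)$ by $O(1)$ in the quasiconformal case, and that some a.e.\ bookkeeping is required). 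The only cosmetic mismatch with the paper's conventions is that the paper writes the boundary action as $\phi_g(x)=g^{-1}\cdot x$ while you use $x\mapsto gx$; since the two differ only by relabelling $g\leftrightarrow g^{-1}$, this does not affect the correctness of your argument.
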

The measure-preserving action of $G \times \R$ on $(\lagg\times \R, \mubms\times \LL)$  induces a flow $\phi^\R$ on the quotient measure space $$(UM,\mubm):= (\laggr,\mubms \times \LL)/G.$$
We call $(UM,\mubm)$  the {\bf measurable unit tangent bundle} corresponding to the action of $G$ on $X$; and $\mubm$  the {\bf Bowen-Margulis measure} on the  measurable unit tangent bundle $UM$.

The two variable {\bf growth function} is given as follows: $$V_G(x,y,n) = \#\{g \in G: d(x,gy) \leq n\}.$$
We refer the reader to \cite{roblin-memo} for an excellent introduction to Patterson-Sullivan and Bowen-Margulis measures in the context of CAT(-1) spaces. We shall say that a group action of $G$ on $X$ has {\bf non-arithmetic length spectrum} if there does not exist $c>0$ such that all translation lengths are integral multiples of $c$. Roblin \cite{roblin-memo} proved the following dichotomy for group actions on $CAT(-1)$ spaces.
\begin{theorem}\cite[Chapter 4]{roblin-memo}\label{roblindich} Let $G$ be a discrete
	non-elementary group of isometries
	of a CAT(-1) space $X$ with non-arithmetic length spectrum and critical exponent $v$. Then  one has  one of the following two alternatives:
	\begin{enumerate}
		\item There exists a function $c_G: X \times X \to \R_+$ such that
		$V_G(x,y,n) \asymp
		c_G(x, y)e^{vn}$ if the Bowen-Margulis measure of the measurable unit tangent bundle  is finite: $\mubm(UM) < \infty$.
		\item  $V_G(x,y,n) = o(e^{vn})$ else.
	\end{enumerate}
\end{theorem}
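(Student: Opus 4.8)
The plan is to follow Roblin's argument, whose point is to translate the two-variable counting function $V_G(x,y,n)$ into $\mubm$-measures of dynamically defined subsets of the measurable unit tangent bundle $(UM,\mubm)=(\laggr,\mubms\times\LL)/G$, and then to exploit mixing of the geodesic flow in Case~(1) and the failure of finiteness (or conservativity) in Case~(2). First I would set up this reduction: fixing $x,y\in X$ and a small $\epsilon>0$, replace each orbit point $g.y$ by the ball $B(g.y,\epsilon)$ and each relevant direction by a narrow cone, so that $V_G(x,y,n)=\sum_{g}\mathbf 1\{d(x,g.y)\le n\}$ is comparable, up to multiplicative constants depending only on $\epsilon$ and the hyperbolicity constant, to the $\mubm$-measure of the set of $v\in UM$ whose $\phi^{\R}$-orbit is carried within time $n$ from a fixed dynamical box near $x$ to one near $y$. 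The accounting rests on Sullivan's Shadow Lemma: the Patterson--Sullivan mass, seen from $x$, of the shadow of $B(g.y,\epsilon)$ is $\asymp e^{-v\,d(x,g.y)}$ with constants uniform in $g$, so the exponential weights organizing the Poincar\'e sum get converted, through the density $d\mubms(\xi,\eta)=e^{2v\langle\xi,\eta\rangle_o+O(1)}\,d\mu(\xi)\,d\mu(\eta)$ of \propref{coarseBMS}, into genuine measures of shadow boxes. This is exactly where the CAT(-1) (rather than merely Gromov-hyperbolic) hypothesis enters essentially: convexity and strong visibility make the Shadow Lemma and the Busemann estimates honest two-sided comparisons rather than coarse ones.

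For Case~(1), $\mubm(UM)<\infty$: finiteness of $\mubm$ together with the non-arithmeticity of the length spectrum makes the geodesic flow $\phi^{\R}$ on $(UM,\mubm)$ mixing (see \cite{roblin-memo}). I would then decompose the interval $[0,n]$ and, for dynamical boxes $\AAA$ near $x$ and $\BB$ near $y$ with $0<\mubm(\AAA),\mubm(\BB)<\infty$, invoke
\[
\mubm\big(\AAA\cap\phi^{-t}\BB\big)\ \longrightarrow\ \frac{\mubm(\AAA)\,\mubm(\BB)}{\mubm(UM)}\qquad (t\to\infty).
\]
Integrating this against the exponential Jacobian $e^{vt}\,dt$ of the conformal density over $0\le t\le n$ and reassembling through the reduction above gives $c(x,y)\,e^{vn}\le V_G(x,y,n)\le C(x,y)\,e^{vn}$ for all large $n$, with $c,C>0$ depending on $x,y$ only through the $\mu_x$- and $\mu_y$-masses of the chosen shadows; absorbing these into a single positive function $c_G$ yields $V_G(x,y,n)\asymp c_G(x,y)e^{vn}$. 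Keeping track of the constants would in fact upgrade this to the sharp asymptotic $V_G(x,y,n)\sim\frac{\|\mu_x\|\|\mu_y\|}{v\,\mubm(UM)}\,e^{vn}$, but the stated $\asymp$ is all that is required here.

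For Case~(2), $\mubm(UM)=\infty$, mixing is unavailable, and I would split along Roblin's Poincar\'e-series dichotomy. If $P(s)$ converges at $s=v$, then $\sum_g e^{-v\,d(x,g.y)}<\infty$; writing $\epsilon_k=\sum_{k-1<d(x,g.y)\le k}e^{-v\,d(x,g.y)}$ one has $\#\{g: k-1<d(x,g.y)\le k\}\le e^{vk}\epsilon_k$ with $\sum_k\epsilon_k<\infty$, whence $e^{-vn}\sum_{k\le n}e^{vk}\epsilon_k\to0$ by a routine Abel-summation estimate, so $V_G(x,y,n)=o(e^{vn})$. If instead $P(v)$ diverges while $\mubm(UM)=\infty$, argue by contradiction: were $\limsup_n e^{-vn}V_G(x,y,n)>0$ along some $n_k$, the reduction of the first paragraph would give the normalized counting measures $e^{-vn_k}\sum_{d(x,g.y)\le n_k}\delta_{g.x}\otimes\delta_{g^{-1}.y}$ on $\bbar X\times\bbar X$ a nonzero weak-$*$ limit, supported on $\lagg$ (off the diagonal, since the Patterson--Sullivan measure is non-atomic), lying in the class $[\mu\times\mu]$ with a density of the form $e^{2v\langle\xi,\eta\rangle_o+O(1)}$; pushed down to $UM$ this produces a finite $\phi^{\R}$-invariant measure absolutely continuous with respect to $\mubm$. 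Since divergence type makes $\mubm$ ergodic and conservative, such a measure must be a scalar multiple of $\mubm$, forcing $\mubm(UM)<\infty$ — a contradiction. Hence $V_G(x,y,n)=o(e^{vn})$.

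The step I expect to be the main obstacle is the uniformity in the reduction of the first paragraph: one must show that the fattening of orbit points and directions distorts the count by a factor bounded \emph{independently of $n$}, so that neither the mixing asymptotic of Case~(1) nor the weak-limit argument of Case~(2) is swamped by accumulated boundary effects. This is precisely where the full strength of the Shadow Lemma in CAT(-1) spaces, together with the $O(1)$-control $F(\xi,\eta)=2v\langle\xi,\eta\rangle_o+O(1)$ from \propref{coarseBMS}, is indispensable; the remaining ingredients — mixing from non-arithmeticity, ergodicity and uniqueness of $\mubm$ in its measure class, and the Abel estimate in the convergence-type case — are by now standard.
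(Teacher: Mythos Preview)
The paper does not give its own proof of this theorem: it is quoted verbatim as a result of Roblin \cite[Chapter 4]{roblin-memo}, with only the one-line remark that ``the proof of Theorem~\ref{roblindich} above depends crucially on mixing of the geodesic flow in this context \cite[Chapter 3]{roblin-memo}.'' So there is nothing in the paper to compare your proposal against beyond that remark.

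That said, your outline is a faithful sketch of Roblin's own argument, and is consistent with the paper's remark: the Shadow Lemma reduction to dynamical boxes, mixing under non-arithmeticity when $\mubm(UM)<\infty$, and the convergence/divergence split when $\mubm(UM)=\infty$ (Abel summation in the convergent case, and a weak-limit contradiction via ergodicity and uniqueness of $\mubm$ in its class in the divergent case) are exactly the ingredients of \cite[Chapitres 3--4]{roblin-memo}. Your identification of the uniformity in the box-reduction as the delicate point is also accurate. In short, you have supplied what the paper deliberately omits; within the paper itself the theorem is treated as a black-box citation.
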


The proof of Theorem \ref{roblindich} above depends crucially on mixing of the geodesic flow in this context \cite[Chapter 3]{roblin-memo}.
In this strong form, it fails for hyperbolic groups equipped with the word metric (see the discussion after \cite[Corollary 1.7]{bader-furman}, where the authors prove a weaker version of mixing).

\section{Mixing and equidistribution of spheres}\label{sec:mix}
Let $(\Omega,m)$ be a finite measure space. Let $\GG$ be a
a locally compact topological group  acting on $(X,m)$ preserving $m$. The $\GG-$action is said to be  {\bf mixing}  if, for any pair of measurable subsets $A, B \subset \Omega$, and any sequence $g_n \to \infty$ in $G$,  $$\nu(A \cap g_n B) \to \frac{\nu(A) \, \nu(B)}{\nu(\Omega)}.$$

\subsection{CAT(-1) spaces}\label{sec-mixcat} For this subsection $X$ is a proper CAT(-1) space, and $M=X/G$. We can construct the {\it geometric} tangent bundle to $M$ as follows: $$U_gM = (X \times \partial X)/G,$$ where $G$ acts diagonally. In this context, Roblin \cite{roblin-memo} constructs the Bowen-Margulis measure $\mubm$ on $U_gM$ converting it to a space measure-isomorphic to 
the measurable unit tangent bundle $(UM,\mubm)$ (described just after Proposition \ref{almost2exact}).

When  $\mubm(UM) < \infty$, Roblin \cite[Chapter 3]{roblin-memo} proves that the Bowen-Margulis measure $\mubm$ is  mixing under the geodesic flow on $UM$ unless the length spectrum is arithmetic (see also \cite{ricks,glink18}). Conjecturally, arithmetic length spectrum
is equivalent to the condition that there exists $c > 0$ such that $X$ is isometric to a tree with all edge lengths in $c\natls$ (this has been proven under the additional assumption that the limit set of $G$ is full, i.e.$\lag = \partial X$ in \cite{ricks}).
Let $P:UM\to M$ be the natural projection, so that $P^{-1}(p) =S_p$ may be thought of as the {\it `unit tangent sphere'} at $p \in M$. $S_p$
can be naturally identified with the boundary $\partial X$  equipped with the Patterson-Sullivan measure $\mu$ supported on the limit set $\lag$
(see for instance the discussion on skinning measures in \cite[Section 3]{pp-skin}). We denote this measure
by $\mu_p$ and think of it as the Patterson-Sullivan measure on $\lag$ based at $p$.  
Broise-Alamichel,
Parkkonen and Paulin \cite{pp-eq,pp-skin} (see also  \cite{eskin-mcmullen}) prove that when $X$ is CAT(-1) and the geodesic flow is mixing, then  $\mu_p$
equidistributes to the Bowen-Margulis measure (see also \cite{os-inv,os-jams}
where the considerably more general notion of skinning measures was introduced).
Let $A \subset UM$ denote any measurable subset and let
$A_{p,t} :=    \{x \in S_p \vert g_t(x) \in A\}.$
We summarize these results below:

\begin{theorem}\cite{eskin-mcmullen,pp-skin,pp-eq,ricks}\label{em}
	Suppose $X$  is $CAT(-1)$ such that $M = X/G$ has non-arithmetic length spectrum. For  $UM$ as above,
	suppose $\mubm(UM) < \infty$. Then
	the sphere $S(p,t)$ of radius $t$ about a point $p \in X/G$
	becomes equidistributed
	as $t \to \infty$ in the following sense. For any measurable subset  $A \subset UM$, and $p \in M$, $$\frac{\mu_p(A_{p,t})}{\mu_p(S_{p})}\rightarrow \frac{\mubm(A)}{\mubm(UM)}$$ as $t\to \infty$.
\end{theorem}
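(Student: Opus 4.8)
The statement follows by combining Roblin's mixing theorem with the \emph{thickening} (or \emph{wavefront}) argument of Eskin--McMullen \cite{eskin-mcmullen}, carried out in the CAT(-1) setting by Parkkonen--Paulin \cite{pp-eq,pp-skin} (and in the related CAT(0) rank-one setting by Ricks \cite{ricks}); the plan is to reproduce this scheme. The sole dynamical input is that, since $\mubm(UM)<\infty$ and the length spectrum is non-arithmetic, the geodesic flow $\phi^\R$ on $(UM,\mubm)$ is mixing (Roblin, \cite[Chapter 3]{roblin-memo}): for measurable $A,B\subset UM$,
\[
\mubm\!\left(A\cap\phi^t B\right)\longrightarrow\frac{\mubm(A)\,\mubm(B)}{\mubm(UM)}\qquad(t\to\infty).
\]

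First I would fix $p\in M$, lift to $X$, and set up the local product structure. By \propref{coarseBMS} the Bowen--Margulis measure has the form $d\mubms(x,y)=e^{F(x,y)}\,d\mu(x)\,d\mu(y)$ with $F(x,y)=2v\langle x,y\rangle_o+O(1)$; adjoining the $\R$-coordinate along the flow exhibits $(UM,\mubm)$ locally as a product of a stable direction, an unstable direction, and $\R$. Under the identification of the fibre $P^{-1}(p)=S_p$ with $\partial X$ equipped with $\mu_p$, the conditional measure of $\mubm$ on the unstable sphere $S_p$ is, up to a multiplicative factor $1+o_\epsilon(1)$ on an $\epsilon$-scale, the measure $\mu_p$ itself. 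Accordingly, for small $\epsilon>0$ let $N_\epsilon=\{\phi^s\eta:\eta\in S_p,\ |s|<\epsilon\}$ be the $\epsilon$-flow-thickening of $S_p$; then $\mubm(N_\epsilon)=2\epsilon\,\mu_p(S_p)\,(1+o_\epsilon(1))$, and $\phi^tN_\epsilon$ is, up to $\mubm$-measure $o_\epsilon(1)$, a flow box over $\phi^tS_p=S(p,t)$.

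Next I would run mixing with $B=N_\epsilon$ and with $A$ replaced by its $\delta$-flow-thickening $A^{(\delta)}$, for a second small parameter $\delta$. A point $\phi^{t+s}\eta$ of $\phi^tN_\epsilon$ lies in $A^{(\delta)}$ iff $\eta\in(\phi^{-s}A^{(\delta)})_{p,t}$, so Fubini along the $\R$-direction gives
\[
\mubm\!\left(A^{(\delta)}\cap\phi^tN_\epsilon\right)=\int_{-\epsilon}^{\epsilon}\mu_p\!\left((\phi^{-s}A^{(\delta)})_{p,t}\right)ds\cdot(1+o_\epsilon(1)).
\]
Dividing by $\mubm(N_\epsilon)$, applying mixing, and then letting $t\to\infty$, afterwards $\epsilon\to0$ and $\delta\to0$ — the $s$-integrand is squeezed between $\mu_p((\phi^{-s}A)_{p,t})$-type and $\mu_p(A_{p,t})$-type quantities and is essentially constant in $s$ on the $\epsilon$-scale, while $\mubm(A^{(\delta)})\to\mubm(A)$ once one reduces to $A$ with $\mubm(\partial A)=0$ and then approximates a general measurable $A$ using $\mubm(UM)<\infty$ — one arrives at
\[
\frac{\mu_p(A_{p,t})}{\mu_p(S_p)}\longrightarrow\frac{\mubm(A)}{\mubm(UM)}.
\]

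The hard part will be the \emph{un-thickening} step: one must know that $\phi^tN_\epsilon$ really does look like a flow box over $S(p,t)$, with conditional sphere-measures uniformly in $t$ comparable to the pushforward of $\mu_p$, i.e.\ that the flow expands the ``sphere'' directions with bounded distortion. In a negatively curved manifold this is the standard uniform expansion/contraction of unstable/stable horospheres by the geodesic flow; in the CAT(-1) setting the manifold structure is unavailable and one substitutes the Busemann-cocycle transformation rule for the Patterson--Sullivan density (\defref{qcdens}) together with the uniformly bounded error term in $F$ from \propref{coarseBMS}. This is precisely what the skinning-measure equidistribution of Parkkonen--Paulin \cite{pp-skin} (see also Oh--Shah \cite{os-inv,os-jams}) supplies, so one may either invoke their results directly or reprove them in the present notation. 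Non-arithmeticity of the length spectrum enters only through Roblin's mixing theorem, and finiteness of $\mubm$ both underlies the mixing and makes the right-hand side $\mubm(A)/\mubm(UM)$ well defined.
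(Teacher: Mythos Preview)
The paper does not supply its own proof of this theorem: it is stated as a quotation from the literature, with the preceding sentence explicitly saying ``Broise-Alamichel, Parkkonen and Paulin \cite{pp-eq,pp-skin} (see also \cite{eskin-mcmullen}) prove that \ldots\ We summarize these results below.'' So there is nothing in the paper to compare your argument against line by line.

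That said, your sketch is faithful to the method of the cited sources: Roblin's mixing of the geodesic flow (under finiteness of $\mubm$ and non-arithmeticity) combined with the Eskin--McMullen wavefront/thickening argument, with the CAT(-1) implementation via skinning measures as in Parkkonen--Paulin and Oh--Shah. Your identification of the ``un-thickening'' step as the place where one needs the Busemann-cocycle conformality of the Patterson--Sullivan density (in lieu of smooth stable/unstable foliations) is exactly the point of the skinning-measure formalism. So your proposal matches the approach underlying the references the paper invokes; the paper itself simply cites the result.
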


\subsection{Symmetric spaces}\label{sec-mixss}

Using results of Kleinbock-Margulis~\cite{KM1, KM2},
we will prove the following general statement:

\begin{theorem}\label{hom} Let $\GG$ be a connected semisimple Lie group without
	compact factors, let $K$ be a maximal compact subgroup with Haar measure
	$\nu$, and let $G$ be an irreducible lattice in $\GG$.  Let $A^+ \subset A$ denote the positive Weyl chamber $A^+$ in the Cartan subgroup $A$. Let $\{g_t\} \subset A^{+}$ denote a one-parameter subgroup and let $\mu$ denote the Haar measure on $M = K \backslash \GG/G$ inherited from
	$\GG$.  Let $d$ denote the distance function on $M$ arising from a right $\GG$-invariant Riemannian metric on $K \backslash \GG$ (for example, the metric induced by the Killing form on $\GG$). Let $\pi: \GG \rightarrow M$ denote the map $\pi(g) = KgG$. Then, for all $g, g_0 \in \GG$, we have $$\lim_{t \rightarrow \infty}
	\int_K d(\pi(g_tkg), \pi(g_0))d\nu(k) = \int_M d(x, y)d\mu(y).$$
\end{theorem}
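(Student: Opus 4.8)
The plan is to reduce the statement to an equidistribution result for the expanding translates $g_t k g$ (with $k$ ranging over $K$) on the homogeneous space $\GG/G$, and then to integrate the distance function against the equidistributing measures. First I would note that the integrand $k \mapsto d(\pi(g_t k g), \pi(g_0))$ is a bounded continuous function of $k \in K$ (bounded because $M$ need not be compact, but the distance to a fixed point $\pi(g_0)$ grows at most linearly in the displacement, and we are integrating against the finite measure $\nu$; to be careful one truncates $d$ at level $R$, proves the statement for $d \wedge R$, and then lets $R \to \infty$ using that $\mu$ is a probability measure and a uniform integrability estimate). So it suffices to understand the pushforward under $k \mapsto \pi(g_t k g)$ of the measure $\nu$ on $K$, as $t \to \infty$.

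The key step is to invoke the Kleinbock--Margulis theory of nondivergence and mixing/equidistribution of translates \cite{KM1, KM2}: since $\{g_t\} \subset A^+$ is a one-parameter subgroup going to infinity in the positive Weyl chamber, and $K$ is transverse to the contracting horospherical direction of $g_t$, the translates $g_t K g G$ become equidistributed in $\GG/G$ with respect to the (normalized) Haar measure. Concretely, for any $\psi \in C_c(\GG/G)$ one has $\int_K \psi(g_t k g G)\, d\nu(k) \to \int_{\GG/G} \psi \, d(\text{Haar})$ as $t \to \infty$; this is the standard "expanding translates equidistribute" statement, which follows from mixing of the $A$-action on $L^2(\GG/G)$ together with a wavefront/thickening argument (or directly from the quantitative nondivergence plus mixing estimates in \cite{KM1, KM2}). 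Projecting down via $\pi$, the pushforward of $\nu$ under $k \mapsto \pi(g_t k g)$ converges weakly to $\mu$ on $M = K\backslash \GG / G$.

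Combining the two steps: for the truncated distance $d_R = d \wedge R$, which is bounded and continuous on $M$, weak convergence gives
\begin{equation}
\lim_{t \to \infty} \int_K d_R(\pi(g_t k g), \pi(g_0))\, d\nu(k) = \int_M d_R(x, \pi(g_0))\, d\mu(x).
\end{equation}
One then removes the truncation: the left-hand side is handled by a uniform (in $t$) tail bound coming from the nondivergence estimate of \cite{KM1} (the translates spend a uniformly small proportion of $K$-mass in the cusp, hence at distance $> R$ from $\pi(g_0)$, with the bound $\to 0$ as $R \to \infty$), and the right-hand side converges to $\int_M d(x, \pi(g_0))\, d\mu(x)$ by monotone convergence; finally $\int_M d(x, \pi(g_0))\, d\mu(x) = \int_M d(x,y)\, d\mu(y)$ by the $\GG$-invariance of the setup, so the answer is independent of $g_0$ (and of $g$). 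This yields the claimed limit.

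The main obstacle I anticipate is the non-compactness of $M$: the distance function $d(\cdot, \pi(g_0))$ is unbounded, so weak-$*$ convergence of measures does not immediately apply, and one genuinely needs the quantitative nondivergence of \cite{KM1} (or an $L^1$-type equidistribution with control on escape of mass) to justify the exchange of limit and integral after truncation. Establishing the uniform-in-$t$ integrability of $k \mapsto d(\pi(g_t k g), \pi(g_0))$ — i.e.\ that no positive fraction of the $K$-orbit escapes to the cusp too fast — is the technical heart of the argument; once that is in hand, the rest is weak convergence plus monotone/dominated convergence.
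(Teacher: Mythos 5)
Your approach is essentially correct and invokes the right sources (Kleinbock--Margulis), but the paper takes a shorter route. You set up weak-$*$ equidistribution of the translates $g_t K g$ against $C_c(\GG/G)$, truncate $d$ at level $R$, pass to the limit, and then remove the truncation by establishing a uniform-in-$t$ tail bound on $k \mapsto d(\pi(g_tkg),\pi(g_0))$ via nondivergence — correctly flagging that last step as the technical heart. The paper instead applies a single stronger statement, Corollary A.8 of \cite{KM2}, which asserts equidistribution $\int_K \phi(g_tkx)\,d\nu(k) \to \int_\Omega \phi\,d\eta$ directly for $\phi$ that is H\"older continuous and in $L^2(\Omega,\eta)$, not merely compactly supported. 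With that in hand, the only thing left to check is that $\phi(g)=d(\pi(g),\pi(g_0))$ is H\"older (clear) and lies in $L^2$, and the latter follows at once from the exponential tail estimate $\eta\{d(\cdot,\pi(g_0))>t\}\le C_1 e^{-C_2 t}$ of \cite[\S 5]{KM1}. So the paper replaces your truncation-plus-uniform-integrability argument by an $L^2$-membership check: the two are morally doing the same work (both hinge on the same KM1 cusp estimates), but invoking the $L^2$ version of the equidistribution theorem absorbs the uniform-integrability step into the cited result and avoids re-deriving it. If you want to run your version to completion you would need to state and prove the uniform tail bound $\sup_{t\ge T_0}\nu\{k: d(\pi(g_tkg),\pi(g_0))>R\}\to 0$ as $R\to\infty$ (with enough decay to control the integral, not just the measure), which is precisely the content you are currently only asserting.
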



\medskip

\noindent Our main tool is the following result of
Kleinbock-Margulis

\begin{theorem}~\cite[Corollary A.8]{KM2}: Fix notation as in Theorem~\ref{hom}. Let
	$\Omega = \GG/G$, and $\phi \in L^2(\Omega, \eta)$, where $\eta$ is the Haar measure on $\Omega$ 
	Assume that $\phi$ is H\"older continuous. Then $$\lim_{t \rightarrow \infty}
	\int_K \phi(g_tkx)d\nu(k) = \int_{\Omega} \phi(y)d\eta(y).$$\end{theorem}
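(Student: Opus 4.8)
The plan is the standard two-step scheme for $K$-averaged equidistribution: prove the conclusion first for smooth $\phi$ by exploiting quantitative decay of matrix coefficients of the $\GG$-representation on $L^2(\Omega,\eta)$, then reach H\"older $\phi$ by mollification, with a final interpolation step upgrading $L^2$-convergence to the pointwise statement. Writing $\phi=\bar\phi+\phi_0$ with $\bar\phi=\int_\Omega\phi\,d\eta$ a constant and $\phi_0\in L^2_0(\Omega,\eta):=\{\psi\in L^2:\int\psi\,d\eta=0\}$, the asserted identity is trivial for $\bar\phi$, so we may assume $\phi\in L^2_0$. Let $\pi$ be the unitary $\GG$-representation on $L^2(\Omega,\eta)$, $(\pi(g)\psi)(y)=\psi(g^{-1}y)$, and let $P=\int_K\pi(k)\,d\nu(k)$ be the orthogonal projection onto $K$-fixed vectors. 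Using unimodularity of $\GG$ and invariance of $\nu$ under $k\mapsto k^{-1}$, one checks the reformulation
\begin{equation}\label{eq:Paverage}
\int_K\phi(g_tkx)\,d\nu(k)\;=\;\bigl(P\,\pi(g_t^{-1})\phi\bigr)(x),\qquad x\in\Omega,
\end{equation}
so the theorem amounts to showing $P\pi(g_t^{-1})\phi\to 0$, in $L^2$ and then pointwise. Since $G$ is an irreducible lattice in the semisimple group $\GG$ without compact factors, $\GG$ acts ergodically (indeed mixingly) on $\Omega$ by Moore's theorem, so $L^2_0(\Omega)$ has no nonzero $\GG$-invariant vector; it does, however, contain $K$-fixed vectors, which is exactly why $P\pi(g_t^{-1})\phi$ need not vanish identically and a genuine decay estimate is required.

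Suppose first $\phi$ is smooth, and set $w_t:=P\pi(g_t^{-1})\phi$, a $K$-fixed vector in $L^2_0$, so that $\|w_t\|_{L^2}^2=\langle\pi(g_t^{-1})\phi,\,w_t\rangle$. Since $w_t$ has trivial $K$-type and $\phi$ is a Sobolev-smooth vector, the quantitative decay of matrix coefficients for the semisimple group $\GG$ --- the Howe--Moore vanishing theorem refined to a rate by Cowling--Haagerup--Howe, in the Sobolev-norm form packaged in the appendix of \cite{KM2} --- provides a fixed Sobolev order $\ell$ and constants $C,\kappa>0$ (depending only on $\GG$ and on the spectral gap of $L^2_0(\Omega)$) such that
\[
\bigl|\langle\pi(g)\phi,\,w_t\rangle\bigr|\;\le\;C\,\|\phi\|_{(\ell)}\,\|w_t\|_{L^2}\,\Xi(g)^{\kappa},
\]
where $\Xi$ is the Harish-Chandra spherical function (bi-$K$-invariant, $\Xi(g^{-1})=\Xi(g)$) and $\|\cdot\|_{(\ell)}$ a fixed $\GG$-Sobolev norm of order $\ell$. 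Taking $g=g_t^{-1}$ and cancelling a factor $\|w_t\|_{L^2}$ gives $\|w_t\|_{L^2}\le C\,\|\phi\|_{(\ell)}\,\Xi(g_t)^{\kappa}$. As $g_t\to\infty$ inside the chamber $A^+$, writing $g_t=\exp(tH_0)$ with $H_0\neq 0$ in the closed positive chamber one has $\Xi(g_t)\asymp t^{a}e^{-t\rho(H_0)}$ with $\rho(H_0)>0$, so $\Xi(g_t)\to 0$ exponentially fast and hence $\|P\pi(g_t^{-1})\phi\|_{L^2}\to 0$.

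For general $\gamma$-H\"older $\phi$, fix $\epsilon>0$ and mollify: $\phi_\epsilon(y)=\int_\GG\beta_\epsilon(h)\,\phi(h^{-1}y)\,dm(h)$ with $\beta_\epsilon\ge 0$ smooth, supported in the $\epsilon$-ball about $e$, $\int\beta_\epsilon\,dm=1$. Then $\int\phi_\epsilon\,d\eta=\int\phi\,d\eta$; since the metric on $\Omega$ descends from a right-$\GG$-invariant metric, $d(h^{-1}y,y)\le d_\GG(e,h)\le\epsilon$ gives $\|\phi-\phi_\epsilon\|_\infty\le C\|\phi\|_{C^\gamma}\epsilon^\gamma$; and differentiating under the integral gives $\|\phi_\epsilon\|_{(\ell')}\le C_{\ell'}\epsilon^{-\ell'}\|\phi\|_{C^\gamma}$ for every $\ell'$. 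By \eqref{eq:Paverage} and $\|P\|_{L^\infty\to L^\infty}=1$, it suffices to show $(P\pi(g_t^{-1})\phi_\epsilon)(x)\to 0$ for each fixed $\epsilon$ and $x$, after which $\limsup_t\bigl|\int_K\phi(g_tkx)\,d\nu(k)-\int\phi\,d\eta\bigr|\le C\|\phi\|_{C^\gamma}\epsilon^\gamma$ and we let $\epsilon\to 0$. To go from $L^2$ to pointwise: the previous step gives $\|P\pi(g_t^{-1})\phi_\epsilon\|_{L^2}\lesssim_\epsilon\Xi(g_t)^{\kappa}$, while the crude bound $\|\pi(g_t^{-1})\psi\|_{(\ell')}\le e^{c\ell't}\|\psi\|_{(\ell')}$ (derivatives are distorted by $\|\mathrm{Ad}(g_t^{-1})\|\le e^{ct}$, $\pi$ is $L^2$-isometric, and $P$ is bounded on each Sobolev space) gives $\|P\pi(g_t^{-1})\phi_\epsilon\|_{(\ell')}\lesssim_\epsilon e^{c\ell't}$. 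Interpolating, $\|P\pi(g_t^{-1})\phi_\epsilon\|_{(\theta\ell')}\lesssim_\epsilon\Xi(g_t)^{\kappa(1-\theta)}e^{c\theta\ell't}$ for $\theta\in(0,1)$; choosing $\theta$ small enough that the exponential decay of $\Xi(g_t)^{\kappa}$ dominates $e^{c\theta\ell't}$, and then $\ell'=\ell'(\theta)$ large enough that $\theta\ell'>\tfrac12\dim\GG$, Sobolev embedding yields $|(P\pi(g_t^{-1})\phi_\epsilon)(x)|\le C_x\,\|P\pi(g_t^{-1})\phi_\epsilon\|_{(\theta\ell')}\to 0$.

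The main obstacle is precisely this last passage from $L^2$-decay to a pointwise conclusion: $\pi(g_t^{-1})$ expands derivatives exponentially, so no single Sobolev norm of $P\pi(g_t^{-1})\phi_\epsilon$ decays, and one must trade the exponential decay of $\Xi(g_t)$ against an exponentially growing higher-order bound via interpolation, which forces the mollification to be available in arbitrarily high smoothness and hence the two-scale argument in $\epsilon$. The only other delicate point, when $G$ is non-uniform, is that $\Omega$ is noncompact: since the statement is used (as in \thmref{hom} for distance functions) only for H\"older --- indeed Lipschitz --- $\phi$, no escape-of-mass argument is needed; for arbitrary bounded measurable $\phi$ one would in addition invoke reduction theory on $\GG/G$.
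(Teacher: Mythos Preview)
The paper does not prove this statement: it is quoted as \cite[Corollary~A.8]{KM2} and used as a black box to deduce Theorem~\ref{hom}, so there is no ``paper's own proof'' to compare your sketch against. What can be assessed is whether your argument stands on its own.

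Your reduction to $L^2_0$ and the $L^2$-decay of $w_t=P\pi(g_t^{-1})\phi$ for smooth $\phi$ via Cowling--Haagerup--Howe are fine. The gap is in the interpolation step you use to pass from $L^2$-decay to pointwise decay. Write $\Xi(g_t)\asymp e^{-\rho(H_0)t}$ and $\|\mathrm{Ad}(g_t)\|=e^{ct}$; your interpolated bound on $\|w_t\|_{(s)}$ with $s=\theta\ell'$ decays only when $s<\kappa\rho(H_0)/c$, while Sobolev embedding needs $s>\tfrac12\dim\GG$. There is no reason for $\kappa\rho(H_0)/c>\tfrac12\dim\GG$: already for $\GG=SL_2(\R)$ one has $\rho(H_0)=1$, $c=2$, $\dim\GG=3$, so the window is $3/2<s<\kappa/2$, which is empty since $\kappa\le 1$. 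The order in which you choose $\theta$ and $\ell'$ does not rescue this. The standard route (and the one actually taken in \cite{KM2}) is dual to yours: rather than mollifying $\phi$, one thickens the \emph{point} $x$ to a small neighborhood $Vx$ with normalized indicator $\psi_\epsilon$, so that the $V$-average of $F_t(y):=\int_K\phi(g_tky)\,d\nu(k)$ becomes the genuine matrix coefficient $\langle\pi(g_t^{-1})\phi,\,P\psi_\epsilon\rangle$, which tends to $\int\phi\,d\eta$ by Howe--Moore (no rate needed). The H\"older regularity of $\phi$ then controls $|F_t(vx)-F_t(x)|$ uniformly in $t$ (using that the metric is invariant under the left $\GG$-action, so $d(g_tkvx,g_tkx)=d(vx,x)\le\epsilon$), and letting $\epsilon\to0$ finishes the proof. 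In short: mollify the base point, not the test function.
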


\medskip
\noindent Theorem \ref{hom} is an immediate corollary of this result, since the function $\phi(g) = d(\pi(g), \pi(g_0))$ is clearly H\"older continuous on $\GG$ and therefore on $\Omega$. To check that it is
in $L^2$, we use~\cite[\S5]{KM1}, which shows that the tails of this distance function in fact decay \emph{exponentially}: there are $C_1, C_2 >0$ such that $$\eta\{ x \in \Omega: d(x, \pi(g_0) > t\} \le C_1 e^{-C_2t}.$$
Thus, we have Theorem~\ref{hom}. In fact~\cite{KM2} gives a precise estimate
on the rate of convergence, linking it to the exponential rate of mixing for the
flow $g_t$. In fact, this rate of convergence can be bounded below for  any $g_t = \exp(tz)$ where $z$ is in the norm $1$ subset of the positive Weyl chamber $\mathcal A^+$, and so by doing an extra integration over this set, we can get equidistribution of the \emph{whole} sphere in the space $M$. See~\cite[\S6]{KM1} for more details on describing the geodesic flow on symmetric spaces using the orbits of one-parameter subgroups, following ideas of Mautner.

\subsection{Teichm\"uller and moduli space}\label{sec-mixteich}

For the purposes of this subsection,
let $X=Teich(S)$ be the Teichm\"uller space of a surface $S$, $G=MCG(S)$ be its mapping class group, and $M = X/G$ the moduli space.
Let $UX$ (resp. $UM$) denote the  bundle of unit-norm holomorphic quadratic
differentials on $X$ (resp. $M$). Let $\pi: UM \rightarrow M$ denote the natural projection.
Let $g_t$ denote the Teichm\"uller geodesic flow on $UM$. Masur-Smillie~\cite{MasurSmillie} building on earlier work of Masur~\cite{Masur} and Veech~\cite{Veech}
showed that $UM$ carries a
unique  measure 
$\mu$ (up to scale) in the Lebesgue
measure class such that $\mu(UM) < \infty$ and $g_t$ is mixing. Let $\eta = \pi_* \mu$ denote push-forward of $\mu$ to $M$. For any $x\in M$, denote the  unit-norm holomorphic quadratic
differentials at $x$ by $S(x)$. Identify $S(x)$ with the Thurston boundary $\PML(S)$ of $Teich(S)$ and equip it with the Thurston measure $\nu_x$ based at $x$. Let $\eta_{t, x}= \pi_*g_{t}^* \nu_x$ denote the measure $\nu_x$ pushed forward to the sphere of radius $t$ $\{\pi(g_t(x,v))\} \subset M$, i.e., $$ d\eta_{t,x}(\pi(g_t(x,v)) = d\nu_x(v).$$

\begin{theorem}\label{teichmix} Fix $x_0 \in M$, For almost all $x \in M$, we have $$\lim_{t \rightarrow \infty}
	\int_{M} d(y, x_0) d\eta_{t,x}(y) = \lim_{t \rightarrow \infty} \int_{S(x)} d(\pi(g_t(x,v)), x_0)d\nu_x (v)= \int_M d(y, x_0)d\eta(y).$$  
\end{theorem}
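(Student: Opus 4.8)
The plan is to deduce the statement from mixing of the Teichm\"uller geodesic flow $g_t$ on $(UM,\mu)$ (Masur--Smillie \cite{MasurSmillie}), following the same two-step pattern by which mixing yields Theorem~\ref{hom} in the symmetric-space case. The middle equality in the statement is merely the definition $\eta_{t,x}=\pi_*g_t^*\nu_x$; normalizing so that $\nu_x(S(x))=\mu(UM)$ (equivalently, passing to probability measures), what must be proved is $\int_M\psi\,d\eta_{t,x}\to\int_M\psi\,d\eta$ as $t\to\infty$ for $\psi=d(\cdot,x_0)$. \emph{Step~1}: establish this for $\psi$ bounded and uniformly continuous, i.e.\ weak-$*$ convergence $\eta_{t,x}\to\eta$. \emph{Step~2}: upgrade to the unbounded $\psi=d(\cdot,x_0)$ by a uniform-integrability argument, exactly as the exponential tail estimate of \cite{KM1} was used to pass to $d(\cdot,\pi(g_0))$ in the proof of Theorem~\ref{hom}.

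For Step~1 I would run the ``mixing implies equidistribution of an expanding transversal'' argument of Eskin--McMullen, in the form adapted to the Teichm\"uller flow in \cite{abem}. The fibre $S(x)=\PML(S)$ is a transversal to the weak-stable (stable-plus-flow) foliation of $UX$ --- the dimensions match, and via the Hubbard--Masur correspondence it is parametrized compatibly with the strong-unstable horospheres --- and by \cite{abem} (see also Remark~\ref{th=ps}) the conditional measures of the Masur--Veech measure $\mu$ along such transversals agree, up to a bounded density, with the Thurston measure $\nu_x$. Fix $\epsilon>0$ and thicken $S(x)$ to a flow box $\mathcal B_\epsilon\subset UX$ by spreading it along strong-stable $\epsilon$-balls and flowing for time $(-\epsilon,\epsilon)$; local product structure of $\mu$ (Lebesgue in period coordinates) then gives $\mu(\mathcal B_\epsilon)=c_\epsilon\,\nu_x(S(x))$ for an explicit $c_\epsilon>0$, with $\mu$ restricted to $\mathcal B_\epsilon$ a product of $\nu_x$ with Lebesgue measures in the stable and flow directions. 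Stable contraction and uniform continuity of $\psi\circ\pi$ give $\psi(\pi(g_tw))=\psi(\pi(g_t(x,v)))+O_\psi(\epsilon)$ for $t$ large, uniformly over $w\in\mathcal B_\epsilon$ lying above $(x,v)$; hence
\[
\int_{S(x)}\psi(\pi(g_t(x,v)))\,d\nu_x(v)=\frac{1}{c_\epsilon}\int_{UM}\mathbf 1_{\mathcal B_\epsilon}\cdot(\psi\circ\pi)\circ g_t\,d\mu+O_\psi(\epsilon),
\]
and by $g_t$-invariance of $\mu$ together with mixing the integral on the right converges to $c_\epsilon\,\nu_x(S(x))\,\mu(UM)^{-1}\int_M\psi\,d\eta$. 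Letting $t\to\infty$ and then $\epsilon\to0$ yields Step~1. The non-uniform hyperbolicity of $g_t$ (stable leaves need not contract at a uniform rate) is the one genuine complication here; it is dealt with as in \cite{abem} by working over a large-measure compact set on which the foliations are well behaved, which --- together with the fact that disintegrating $\mu$ along the basepoint projection produces the $\nu_x$ only for a.e.\ $x$ --- is the source of the ``almost all $x$'' in the statement.

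\emph{Step~2}, the uniform integrability of $\psi=d(\cdot,x_0)$ for the family $\{\eta_{t,x}\}_t$, is the main obstacle. Granting it, the proof concludes: with $\psi_R:=\min(d(\cdot,x_0),R)$ one has $\int\psi_R\,d\eta_{t,x}\to\int\psi_R\,d\eta$ by Step~1; $\int\psi_R\,d\eta\uparrow\int d(\cdot,x_0)\,d\eta<\infty$ by monotone convergence, since $d(\cdot,x_0)$ has an exponentially decaying tail under $\eta$ (the region $\{\text{shortest curve}<\varepsilon\}$ of moduli space has $\eta$-mass $\asymp\varepsilon^2$ and lies at Teichm\"uller distance $\asymp\log(1/\varepsilon)$ from $x_0$); and $\left|\int d(\cdot,x_0)\,d\eta_{t,x}-\int\psi_R\,d\eta_{t,x}\right|=\int_{\{d(\cdot,x_0)>R\}}(d(\cdot,x_0)-R)\,d\eta_{t,x}$, which by uniform integrability tends to $0$ as $R\to\infty$ uniformly in $t$. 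To obtain uniform integrability, note that $\pi(g_t(x,v))$ can lie at distance $>R$ from $x_0$ only if the flat surface $g_t(x,v)$ carries a saddle connection shorter than $Ce^{-cR}$; quantitative non-divergence of the Teichm\"uller flow --- the estimate, uniform in $t\ge t_0$ for $x$ in the thick part, that the $\nu_x$-measure of directions $v$ for which $g_t(x,v)$ has a saddle connection of length $<\delta$ is $\lesssim\delta^{\kappa}$ for a fixed $\kappa>0$ (Minsky--Weiss, Eskin--Masur, Athreya) --- yields $\eta_{t,x}(\{d(\cdot,x_0)>R\})\lesssim e^{-c'R}$ uniformly in $t$, and an integration by parts bounds $\int_{\{d(\cdot,x_0)>R\}}d(\cdot,x_0)\,d\eta_{t,x}$ uniformly in $t$. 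Since both the mixing input and these non-divergence estimates are available for almost every $x$ (indeed for $x$ in the thick part), the theorem follows.
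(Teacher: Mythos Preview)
Your outline is sound, but the paper takes a genuinely different route. Rather than running the Eskin--McMullen thickening argument on the full sphere $S(x)$, the paper exploits the $SL(2,\R)$-action on $UM$: it foliates the $t$-sphere around $x$ by $SO(2)$-circles $\{g_t r_\theta(x,v):0\le\theta<2\pi\}$ and applies Nevo's pointwise ergodic theorem \cite{Nevo} for $K$-finite $L^2$ functions. This yields, for $\mu$-a.e.\ $(x,v)$, convergence of the \emph{circle} average $\int_K f(g_t r_\theta(x,v))\,d\theta\to\int f\,d\mu$ directly for the unbounded $f=d(\pi(\cdot),x_0)$; the $L^2$ hypothesis is verified via Masur's inequality $d(x,x_0)\le-\log\ell(x,v)+C'$ together with the Masur--Smillie estimate $\mu\{\ell<\epsilon\}\sim\epsilon^2$. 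The full sphere average is then obtained by disintegrating $\nu_x$ over $\tilde S(x)=S(x)/K$ and arguing by contradiction with the a.e.\ conclusion of Nevo.

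What each approach buys: the paper's use of Nevo sidesteps entirely the non-uniform-hyperbolicity difficulties you flag in Step~1 (no flow-box, no stable contraction needed), and it absorbs your Step~2 as well, since Nevo applies to arbitrary $K$-finite $L^2$ functions rather than just bounded ones --- there is no separate uniform-integrability argument. Your route stays closer to the symmetric-space template of Theorem~\ref{hom} and to \cite{abem}, and makes the tail control explicit. One caution on your Step~2: the estimate you invoke --- $\nu_x\{v:\ell(g_t(x,v))<\delta\}\lesssim\delta^\kappa$ \emph{uniformly in $t$} --- is not the Minsky--Weiss/Athreya large-deviations statement (those control the proportion of \emph{times} a fixed trajectory spends in the thin part, not the pushed-forward sphere measure at a single time $t$); what you actually need is the ``no escape of mass'' input of Eskin--Masur type, and it would require a more careful citation or argument than you give.
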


\medskip

\noindent To prove this theorem, fix $(x, v) \in UM$. The group $SL(2, \R)$ acts on $UX$, and its action commutes with the mapping class group, so it acts on $UM$. The action of the group $$a_{t} = \left( \begin{array}{cc} e^{t}  & 0 \\ 0 & e^{-t}
\end{array}\right)$$ is precisely the geodesic flow $g_t$.  The circles $\{a_t r_{\theta}(x, v): 0 \le
\theta \le 2\pi \}$, where $$r_\theta = \left( \begin{array}{cc} \cos
\theta & \sin \theta \\ -\sin \theta & \cos \theta \end{array}\right)$$
foliate the sphere of radius $t$ around $x \in X$. Let $K = \{r_{\theta}: 0 \le \theta < 2\pi\}$ denote the maximal compact subgroup of $SL(2, \R)$. Let $d\kappa(\theta) = \frac{1}{2\pi} d\theta$ on $K$, and let $d\kappa_{x,v}(r_{\theta}(x,v)) = d\kappa(\theta)$,and let $\kappa_{x, v, t} = a_t^* \kappa(x,v).$

To prove Theorem~\ref{teichmix}, we will use the following ergodic theorem of Nevo's~\cite[Theorem 1.1]{Nevo}, which in our case implies:

\begin{theorem}\label{theorem:nevo}\cite[Theorem 1.1]{Nevo} Let $f \in L^{2}(UX)$ be $K$-finite, that is, the span of the set of functions $\{f_\theta(x, v) = f(r_{\theta}(x,v)): 0 \le \theta < 2\pi\}$ is finite dimensional. Then for $\mu$-almost every $(x, v)$, \begin{equation}\label{eq:Nevo} \int_{UM} f d a_t\nu(x, v) \xrightarrow{t \rightarrow \infty} \int_{UM} f d\mu.\end{equation}
	
\end{theorem}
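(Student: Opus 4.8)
The statement is the specialization of Nevo's pointwise ergodic theorem for radial averages on $SL(2,\R)$ to the measure-preserving action of $G_0 := SL(2,\R)$ on the probability space $(UM,\mu)$ (recall the Masur--Veech measure $\mu$ is $G_0$-invariant), so the plan is to reproduce that argument in the present setting. Write $\beta_t$ for the averaging operator $\beta_t h(x,v) = \int_K h(a_t r_\theta(x,v))\,d\kappa(\theta) = \int_{UM} h\, d(a_t\nu)(x,v)$, where the measure $a_t\nu(x,v) = \kappa_{x,v,t} = a_t^{*}\kappa_{x,v}$ is the push-forward under $g_t$ of the normalized circle measure on the $K$-orbit through $(x,v)$, as set up above; the claim is that $\beta_t f \to \int_{UM} f\, d\mu$ for $\mu$-a.e. $(x,v)$ whenever $f \in L^2(UX)$ is $K$-finite. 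As an operator on $L^2(UM)$, $\beta_t$ is convolution by the probability measure on $G_0$ carried by $a_t K$; decomposing $L^2(UM)$ into irreducible unitary $G_0$-representations and each of those into $K$-isotypic components, $\beta_t$ acts on each $K$-type inside a given irreducible $\pi$ as a scalar given by a generalized spherical (Jacobi/hypergeometric) function $\Phi^{\pi}_\lambda(t)$ of the geodesic-flow time. The first step is therefore to record the Harish-Chandra / Kunze--Stein estimates for these functions: $|\Phi_\lambda(t)| \lesssim (1+t)e^{-t/2}$ on the tempered part of the spectrum, $|\Phi_\lambda(t)| \asymp e^{-2 s_\lambda t}$ with $s_\lambda \in (0,\tfrac12)$ on the complementary series, together with the corresponding bounds for $\partial_t \Phi_\lambda$.

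With those in hand I would assemble three ingredients. \textbf{(i) Mean convergence:} mixing of $g_t$ gives ergodicity of the $G_0$-action, so $L^2(UM) = \C\mathbf{1}\oplus L^2_0$ with $L^2_0$ the mean-zero subspace, and on $L^2_0$ the spherical-function estimates yield $\|\beta_t f\|_2 \to 0$; hence $\beta_t f \to \int_{UM} f\, d\mu$ in $L^2$. \textbf{(ii) A maximal inequality} $\big\|\sup_{t\ge 1}|\beta_t f|\big\|_2 \le C_f\,\|f\|_2$, with $C_f$ depending only on the finite set of $K$-types present in $f$: I would prove this first on $L^2(G_0)$ by a Littlewood--Paley / square-function argument --- bounding $\sup_t|\beta_t f|^2$ by $|\beta_1 f|^2 + \int_1^\infty |\partial_t\beta_t f|^2 w(t)^{-1}\,dt + \int_1^\infty |\beta_t f|^2 w(t)\,dt$, integrating over the space, passing to the spectral side, and choosing the weight $w$ from the estimates above after a dyadic decomposition in the spectral parameter $\lambda$ to absorb the power of $\lambda$ produced by $\partial_t$ on the oscillatory part --- and then transfer it to $(UM,\mu)$ by the Calder\'on transference principle. \textbf{(iii) Pointwise convergence on a dense class}, namely $K$-finite functions that are also smooth or finite sums of matrix coefficients, where $\beta_t f \to \int f\, d\mu$ holds pointwise by writing $\beta_t f$ as an integral of $g_t$-translates of a test function and invoking pointwise mixing. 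Combining (ii) with (iii) through the Banach principle upgrades a.e. convergence from the dense class to all $K$-finite $f \in L^2$, which is the theorem.

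The hard part will be the maximal inequality (ii), for two reasons. First, differentiating the oscillatory spherical function in $t$ costs a power of $\lambda$, which defeats a naive one-parameter square function and forces the frequency-localized decomposition together with the sharp Harish-Chandra bounds. Second, the complementary-series contribution decays only like $e^{-2 s_\lambda t}$, so it is controllable only once $s_\lambda$ is known to be uniformly bounded away from $0$ --- that is, only once the $SL(2,\R)$-action on $(UM,\mu)$ is known to have a spectral gap, which in the moduli-space setting is precisely the exponential-mixing theorem of Avila--Gou\"ezel(--Yoccoz). Granting these inputs, the remaining steps are routine; the hypothesis that $f$ is $K$-finite is exactly what lets one argue one $K$-type at a time and keep all constants uniform.
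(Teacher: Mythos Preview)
The paper does not prove this theorem: it is quoted verbatim as \cite[Theorem 1.1]{Nevo} and used as a black box in the proof of Theorem~\ref{teichmix}. So there is no ``paper's own proof'' to compare against; you have supplied an outline of Nevo's argument where the paper simply cites the result.

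Your outline is the correct architecture --- spectral decomposition into $K$-types inside irreducible $SL(2,\R)$-representations, spherical-function bounds, a strong $L^2$ maximal inequality, and the Banach principle applied to a dense class --- and this is indeed how Nevo proceeds. One substantive caveat: your square-function route to the maximal inequality, as you yourself flag, needs the complementary-series parameters $s_\lambda$ bounded away from $0$, i.e.\ a spectral gap. Nevo's theorem as stated does \emph{not} assume a spectral gap; it holds for an arbitrary ergodic probability-measure-preserving $SL(2,\R)$-action. Nevo's actual treatment of the complementary series avoids this obstruction (for instance by dominating the sphere averages by ball averages, which obey a Hopf--Dunford--Schwartz maximal inequality, or by an analytic interpolation argument), rather than by invoking exponential mixing. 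In the moduli-space application the spectral gap is available via Avila--Gou\"ezel--Yoccoz, so your route would succeed \emph{here}; but as a proof of the general statement being cited, the dependence on spectral gap is a gap, and you should either replace the square-function step by Nevo's domination/interpolation argument or explicitly restrict the claim to actions with spectral gap.
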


\medskip

\noindent To use this theorem for our result, we note that there is a measure $\omega$ on $\tilde{S}(x) = S(x)/K$ so that we can write $$d\eta_{t,x} = \int_{\tilde{S}(x)} da_t\nu(x, [v]) d\omega([v]).$$ We also note that the function $f(x, v) = d(\pi(x, v), x_0)$ is $K$-invariant, and by the following lemma of Masur~\cite{masurloglaw}, in $L^2(UX, \mu)$. Let $\ell(x, v)$ denote the length of the shortest saddle connection of $(x, v)$ (recall that a \emph{saddle connection} is a geodesic in the flat metric on $S$ determined by the quadratic differential $(x, v)$ joining two zeroes, with no zeroes in its interior).

\begin{lemma} There is a constant $C'$ such that for any $x_0 \in M$
	and $(x, v) \in UM$, we have $$ d(\pi(x, v), x_0) = d(x, x_0)
	\le -\log \ell(x,v) + C'.$$
\end{lemma}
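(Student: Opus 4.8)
The plan is the following. Since $\pi(x,v)=x$, the equality $d(\pi(x,v),x_0)=d(x,x_0)$ is merely notation, so the content is the bound $d(x,x_0)\le -\log\ell(x,v)+C'$ in the Teichm\"uller metric on $M$, where $C'$ is allowed to depend on $S$ and on the fixed basepoint $x_0$. I would deduce it from two ingredients: (i) a short saddle connection forces the extremal-length systole $\mathrm{sys}(x):=\min_\gamma \operatorname{Ext}_\gamma(x)$ of the underlying conformal structure $x$ to be small --- precisely, $\mathrm{sys}(x)\ge \ell(x,v)^2$ --- and (ii) the thin part of moduli space is ``logarithmically shallow'' in the Teichm\"uller metric, with depth governed by $-\tfrac12\log\mathrm{sys}$.

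For (i): using the unit-area flat metric determined by $|v|$ as a competitor metric in the analytic definition of extremal length gives $\operatorname{Ext}_\gamma(x)\ge \ell_{|v|}(\gamma)^2$ for every essential simple closed curve $\gamma$, where $\ell_{|v|}(\gamma)$ is the $|v|$-length of the flat geodesic representative $\gamma^\ast$; so it suffices to check $\ell_{|v|}(\gamma)\ge \ell(x,v)$. If $\gamma^\ast$ passes through a cone point it is a concatenation of saddle connections, hence no shorter than the shortest one. Otherwise $\gamma^\ast$ is the core of a maximal flat cylinder $\mathcal C$; by maximality $\partial\mathcal C$ carries a cone point, so a boundary circle of $\mathcal C$ --- which has the same length as $\gamma^\ast$ --- is a concatenation of saddle connections, one of which is then no longer than $\ell_{|v|}(\gamma)$. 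Either way $\ell_{|v|}(\gamma)\ge\ell(x,v)$, which proves (i).

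For (ii) I would fix a small Margulis-type $\epsilon_0>0$ so that curves of extremal length $<\epsilon_0$ at any point are pairwise disjoint, let $\mathcal K_{\epsilon_0}=\{y\in M:\mathrm{sys}(y)\ge\epsilon_0\}$, which is compact (Mumford-type compactness for strata of quadratic differentials), and invoke Minsky's product regions theorem: on the $\epsilon_0$-thin part the Teichm\"uller metric agrees, up to a bounded additive error, with the sup-metric on a product with one factor per short curve, in which decreasing $\operatorname{Ext}_\alpha$ from $\epsilon_0$ down to $\operatorname{Ext}_\alpha(x)$ costs Teichm\"uller distance $\tfrac12\log(\epsilon_0/\operatorname{Ext}_\alpha(x))+O(1)$ (consistent with Kerckhoff's formula), while the thick complementary factor contributes $O(1)$. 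Because it is the sup-metric, this gives $d(x,\mathcal K_{\epsilon_0})\le \tfrac12\log(\epsilon_0/\mathrm{sys}(x))+C_1$ whenever $\mathrm{sys}(x)<\epsilon_0$; combining with (i) and joining $\mathcal K_{\epsilon_0}$ to the basepoint,
\[
d(x,x_0)\ \le\ \tfrac12\log\frac{\epsilon_0}{\ell(x,v)^2}+C_1+\diam(\mathcal K_{\epsilon_0}\cup\{x_0\})\ =\ -\log\ell(x,v)+C'.
\]
If instead $\mathrm{sys}(x)\ge\epsilon_0$ then $x\in\mathcal K_{\epsilon_0}$, and since the flat systole of a unit-area surface is bounded above by some $L(S)$ the same inequality holds after enlarging $C'$. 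The substantive obstacle is (ii) --- that the thin part is logarithmically, not polynomially, shallow, and that its depth is set by the single worst curve rather than by all short curves together; this is exactly Minsky's product regions theorem, and its additive-error (not merely quasi-isometric) form is what produces the clean coefficient $1$ in front of $-\log\ell$. A merely quasi-isometric version would still give $d(x,x_0)\le A\,(-\log\ell(x,v))+B$, which already suffices for the sole use made of the lemma --- placing $f(x,v)=d(\pi(x,v),x_0)$ in $L^2(UX,\mu)$, once combined with the quadratic decay of $\mu(\{\ell(x,v)<\epsilon\})$. Ingredient (i) is routine flat geometry, the cylinder case being the one point that needs care.
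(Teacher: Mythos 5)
The paper does not prove this lemma; it simply cites Masur~\cite{masurloglaw}, so there is no in-paper argument to compare against. Judged on its own, your proposal is correct, and its two-step structure (bound the extremal-length systole below by the square of the shortest saddle-connection length, then bound the Teichm\"uller distance to the thick part by $\tfrac12\log(1/\mathrm{sys})$) is the natural route and, to my knowledge, the same skeleton as Masur's. Ingredient (i) is clean: the area definition of extremal length with the unit-area flat metric as competitor gives $\operatorname{Ext}_\gamma(x)\geq\ell_{|v|}(\gamma)^2$, and the observation that any flat geodesic representative either runs through singularities (hence is a concatenation of saddle connections) or is a cylinder core whose maximal-cylinder boundary circle has the same length and does run through singularities gives $\ell_{|v|}(\gamma)\geq\ell(x,v)$, whence $\mathrm{sys}(x)\geq\ell(x,v)^2$. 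The $\tfrac12$ in the thin-part estimate then exactly cancels the square, giving coefficient $1$ in front of $-\log\ell$ as claimed. Your handling of the thick case (flat systole on a unit-area surface is uniformly bounded above by an area/packing argument around the cone points) is also fine.

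Two small remarks. First, a wording slip: you write that a short saddle connection ``forces the systole to be small, precisely $\mathrm{sys}(x)\geq\ell(x,v)^2$,'' but this inequality is a \emph{lower} bound; what you are really proving, and what the argument needs, is that the systole cannot be much smaller than $\ell(x,v)^2$, i.e.\ that $x$ cannot be deeper in the thin part than $-\log\ell(x,v)+O(1)$. Second, a genuine but fixable technicality in step (ii): after moving each short curve $\alpha_i$ up to extremal length $\epsilon_0$ inside Minsky's product region, the resulting point need not lie in $\mathcal K_{\epsilon_0}$, since curves in the complementary subsurface factor could have ambient extremal length somewhat below $\epsilon_0$ there; one only lands in $\mathcal K_{\epsilon_0'}$ for some $\epsilon_0'(\epsilon_0)>0$ controlled by the multiplicative constants in Minsky's theorem. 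Replacing $\mathcal K_{\epsilon_0}$ by $\mathcal K_{\epsilon_0'}$ (still Mumford-compact) repairs this, and, as you yourself note, even a merely quasi-isometric form of the thin-part estimate suffices for the sole use of the lemma, namely membership of $d(\pi(\cdot),x_0)$ in $L^2(UX,\mu)$ given the quadratic decay of $\mu\{\ell<\epsilon\}$. One historical note: Minsky's product regions theorem (1996) postdates Masur's log-law paper, so the cited reference necessarily obtains the thin-part estimate by other means, but the overall decomposition into (i) and (ii) is the same.
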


\medskip

\noindent By Masur-Smillie~\cite{MasurSmillie}, $$\mu\{ (x, v): \ell(x, v) < \epsilon\} \sim \epsilon^2,$$ which, combined with the lemma, yields that $f \in L^2(UX, \mu)$. To finish the proof Theorem~\ref{teichmix}, we note that if there was a positive $\eta$-measure set of $x \in M$ so that the set of $[v] \in \tilde{S}(x)$ with $$\int_{UM} f d a_t\nu(x, v) \nrightarrow \int_{UM} f d\mu$$ had positive $\omega$-measure, we would have a set of positive $\mu$-measure in $UX$ where (\ref{eq:Nevo}) fails, a contradiction.

\section{Extremal cocycle growth and spherical averages}\label{sec:ecg-sa} In this section, we shall establish a connection between  extremal cocycle growth as in Definition \ref{def-ecgps} and the asymptotics of spherical averages. This will, in particular, allow us to apply the equidistribution theorems  of the previous section. 
\subsection{Averaging measures and spherical averages}
Let $G,X \L, \mu$ be as in Setup \ref{setup} and $o \in X$ be a base-point. Let $\sro=\partial B(o,r)$ denote the boundary of the $r-$ball about $o$. If $X$ is CAT(0) or $Teich(S)$, there is a natural family of continuous projection maps $\pi_{r,t}: \sro \to \s_t(o)$ for $r>t$ sending $x \in \sro$ to $[o,x] \cap \s_t(o)$. We also have the natural projections $\pi_r: \sro \rightarrow \partial X$.

\begin{defn}\label{def-avgingmreconf} Let $X$ be CAT(0) or $Teich(S)$.
	A sequence of probability measures $\{\mu_r\}$ on $\sro$ is said to be a 
	sequence of {\bf averaging measures with respect to a conformal density} $\mu$ supported on $\L \subset \partial X$ if
	\begin{enumerate}
		\item $\pi_{r, t \ast} (\mu_r) = \mu_t$, for $r>t$,
		\item there exists $C\geq 1$ such that if $\mu_\infty$ (a measure on $\partial X$) is any  weak limit of $\pi_r*(\mu_r)$ up to subsequences, then $\mu_\infty$ is supported on 
		$\L$ and $1/C \leq \frac{d\mu_\infty}{d\mu} (\xi) \leq C, \, \forall \xi \in \L$.
	\end{enumerate}
	If $C=1$, then $\{\mu_r\}$ is said to be {\bf strongly averaging}.
\end{defn}

When $X$ is a uniformly proper $\delta-$hyperbolic graph with all edges of length one, $\pi_{rt}$ is not well-defined, but only coarsely so. Thus, for $x \in \sro$, we define  $\pi_{rt} (\dirac_x) $ to be the uniform probability distribution on the set $$\{y \in \s_t(o) \vert \exists \, {\rm geodesic} \, \gamma \, {\rm such \, that} \, o, x, y \in \gamma\}.$$
Note that for $r>t$, the support of $\pi_{rt} (\dirac_x) $ has diameter at most $\delta$. 

\begin{defn}\label{def-avgingmreqc} Let $X$ be a uniformly proper $\delta-$hyperbolic graph with all edges of length one.
	A sequence of probability measures $\{\mu_r\}$ on $\sro$ is said to be a 
	sequence of {\bf averaging measures with respect to a quasiconformal density}
	$\mu$ supported on $\L \subset \partial X$ if
	there exists $C\geq 1$ such that
	\begin{enumerate}
		\item $1/C \leq \frac{d(\pi_{r,t\ast}\mu_r)}{d\mu_t} (x) \leq C, \, \forall x \in \s_t(o)$, whenever $r >t$.
		\item  if $\mu_\infty$ is any  weak limit of $\mu_r$ up to subsequences, then $\mu_\infty$ is supported on 
		$\L$ and $1/C \leq \frac{d\mu_\infty}{d\mu} (\xi) \leq C, \, \forall \xi \in \L$.
	\end{enumerate}
\end{defn}

Note that in Definitions \ref{def-avgingmreconf} and \ref{def-avgingmreqc}, the projections $\pi_{sr}$ for fixed $r$ and $s>r$ can be extended to a projection $\pi_r : \L \to \s_r(o)$ such that

\begin{equation}\label{eq-projmre}
1/C  \leq  \frac{d(\pi_{r\ast}\mu)}{d\mu_r} (x) \leq  C, \, \forall x \in \s_r(o).
\end{equation}

\begin{eg}[Examples of Averaging Measures]\label{egs}
	{\rm We enumerate the examples of interest:\\
		
		\noindent 1) For $(G,X,\L)$ as in Item (1) of Setup \ref{setup}, let $\mu$ be a Patterson-Sullivan density as in Theorem \ref{coornaert}. Assume further that the  associated  Bowen-Margulis measure is finite. Let $\mu_r$ be the 
		the conditional of the Bowen-Margulis measure (equivalently, the Patterson-Sullivan measure
		on $\lag$	based at $o$) pushed forward by the geodesic flow for time $r$.\\

		\noindent 2) For $(G,X,\L)$ as in Item (2) of Setup \ref{setup}, let $\mu$ be a Patterson-Sullivan density as in Theorem \ref{coornaert}. Assume further that the  associated  Bowen-Margulis measure is finite.  Join $o$ to all points $p \in \L$ by geodesic rays to obtain  the cone over the limit set denoted as $\qcg_o$. Note that $\qcg_o$ is $2 \delta-$quasiconvex. Let $\mu_r$ be the uniform distribution on 
		$\qcg_o \cap \sro$ (the equivalence of the uniform measure and the measures on "cylinder sets" is explicitly stated in \cite[Proposition 3.11]{calegarimaher}).\\

		\noindent 3) For $(G,X,\L)$ as in Item (3) of Setup \ref{setup}, let $\mu$ be the Thurston density as in Theorem \ref{abem}. Identifying $\L=\PMF(S) $ with the unit norm quadratic differentials $\QQ^1_o$ at $o$, define $\mu_r$ on $\sro$ to be $\mu$ pushed forward by the Teichm\"uller geodesic geodesic flow for time $r$. Note that by Remark \ref{th=ps}, $\mu$ is a Patterson-Sullivan density.\\

		\noindent 4) For $(G,X,\L)$ as in Item (4) of Setup \ref{setup}, let $\mu$ be the Patterson-Sullivan density as in Theorem \ref{alb}. Let $z$ denote the barycenter of a Weyl chamber at infinity and $[o,z)$ the geodesic ray from $o$ to $z$. Let $z_r \in [o,z) $ be such that $d(o,z_r) = r$. For $K$ the maximal compact of the semi-simple Lie group $\GG$ (with $K\backslash\GG = X$), let $\mu_r$ be the uniform measure on $K_r := z_r\cdot K$
		inherited from the Haar measure on $K$. Note that $K_r \subset \sro$.}
\end{eg}

\begin{prop}\label{prop-avm} Let $\mu_r, \mu$ be one of the four examples in \ref{egs}. Then $\{\mu_r\}$ is a sequence of averaging measures with respect to $\mu$. 
\end{prop}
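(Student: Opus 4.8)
The plan is to verify conditions~(1)--(2) of Definition~\ref{def-avgingmreconf} in cases~1, 3, 4 (where $X$ is CAT(0) or $Teich(S)$) and of Definition~\ref{def-avgingmreqc} in case~2 (the hyperbolic graph), treating the four examples separately. The unifying observation is that in cases~1, 3 and 4 the measure $\mu_r$ is nothing but the reference measure $\mu$ transported from the boundary to the sphere $\sro$ along a coherent family of geodesic rays issuing from $o$ --- via the geodesic flow on $UM$ in case~1, the Teichm\"uller geodesic flow in case~3, and the flat $K$-orbit through $z_r$ in case~4 --- so that both conditions become essentially formal and one in fact obtains \emph{strongly} averaging sequences. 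The genuinely technical case is case~2, where $\pi_{rt}$ is only coarsely defined and must be controlled through the Sullivan shadow lemma.

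For cases~1, 3 and 4 I would first record the relevant family of rays. In case~1, for $\xi\in\partial X$ let $\gamma_\xi$ be the geodesic ray from $o$ to $\xi$; by construction $\mu_r$ is the push-forward, under $\xi\mapsto\gamma_\xi(r)\in\sro$, of the conditional of $\mubm$ on the fibre $S_o\cong\partial X$, which by Section~\ref{sec-mixcat} is the Patterson--Sullivan measure $\mu$ based at $o$. In case~3, replace $\gamma_\xi$ by the Teichm\"uller ray from $o$ whose endpoint realizes the identification $\QQ^1_o\cong\PMF(S)$ (valid for $\mu$-a.e.\ direction, by Masur's theorem on unique ergodicity of the generic vertical foliation), so that $\mu_r$ is $\mu$ pushed along these rays (Remark~\ref{th=ps}). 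In case~4, for $k\in K$ let $\gamma_k=[o,zk)$ with $z$ the barycenter of a Weyl chamber, so that $\mu_r$ is the push-forward of Haar measure on $K$ under $k\mapsto\gamma_k(r)=z_rk$. In each case, unique geodesity of $X$ (resp.\ of $Teich(S)$) shows that the length-$t$ initial segment of $\gamma$ equals $[o,\gamma(r)]\cap\s_t(o)$, hence $\pi_{rt}(\gamma(r))=\gamma(t)$ and therefore $\pi_{rt,\ast}\mu_r=\mu_t$, which is condition~(1); and the endpoint projection $\pi_r$ returns $\gamma(r)$ to $\gamma(\infty)$, so $\pi_{r,\ast}\mu_r=\mu$ for \emph{every} $r$, whence every weak limit $\mu_\infty$ equals $\mu$ and condition~(2) holds with $C=1$. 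Case~4 uses in addition that the $K$-orbit of $z$ is the Furstenberg boundary $\partial_F X$ and that, by uniqueness of the conformal density (Theorem~\ref{alb}) together with $K$-invariance of $\mu_o$, this $K$-invariant push-forward of Haar measure coincides with the Patterson--Sullivan measure $\mu_o$.

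For case~2 --- $X$ a uniformly proper $\delta$-hyperbolic graph, $\mu$ Coornaert's quasiconformal density of dimension $v$ (Theorem~\ref{coornaert}), $\mu_r$ the uniform probability measure on the finite set $\qcg_o\cap\sro$ --- the plan is to run everything through the shadow lemma. First, there is $R_0=R_0(\delta)$ so that $\mu(\OO_{R_0}(o,x))\asymp e^{-vr}=V_r^{-1}$ for every vertex $x\in\qcg_o\cap\sro$, with constants depending only on $\delta,v$; summing over $x$ and using bounded multiplicity of the resulting shadow cover of $\L$ (a consequence of uniform properness) gives $\#(\qcg_o\cap\sro)\asymp V_r$, so each atom of $\mu_r$ carries $\mu_r$-mass comparable to the $\mu$-mass of its shadow. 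This is essentially the comparison between the uniform measure and the cylinder measures recorded in \cite[Proposition~3.11]{calegarimaher}. Condition~(1) then follows: an atom $z\in\qcg_o\cap\s_t(o)$ receives, under the coarse projection $\pi_{rt}$, mass from the $\asymp V_r/V_t$ atoms of $\mu_r$ lying ``beyond'' it, the coarse fibres being $\delta$-bounded, so $\pi_{rt,\ast}\mu_r$ and $\mu_t$ are comparable with constants independent of $r,t$ (the harmless $O(\delta)$ fattening being absorbed using $2\delta$-quasiconvexity of $\qcg_o$). Condition~(2) is the same estimate as $r\to\infty$: any weak limit $\mu_\infty$ of $\mu_r$ is a probability measure on $\L$ satisfying $\mu_\infty(\OO_{R_0}(o,x))\asymp\mu(\OO_{R_0}(o,x))$ uniformly, and since such shadows form a Vitali-type differentiation basis for $(\L,\mu)$, the Lebesgue differentiation theorem gives $1/C\le d\mu_\infty/d\mu\le C$.

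I expect the real work to be concentrated in case~2. Cases~1, 3, 4 amount to bookkeeping once $\mu_r$ is correctly identified with transported $\mu$, the requisite equivariance of the flows and of the $K$-action being already contained in the constructions recalled in Sections~\ref{sec:ps}--\ref{sec:mix}. In case~2, by contrast, $\pi_{rt}$ is only coarsely defined and $X$ may far exceed the weak hull $CH(\L)$ of the limit set, so one must control simultaneously the cardinality of $\qcg_o\cap\sro$, the Patterson--Sullivan measures of the shadows of its points, and the overlaps of the coarse fibres of $\pi_{rt}$; the step I would actually write out in detail is the derivation of these comparisons from \cite[Proposition~3.11]{calegarimaher} and the shadow lemma.
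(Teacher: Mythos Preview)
Your proposal is correct and follows essentially the same route as the paper, though you spell out considerably more than the paper does: in cases~1, 3, 4 the paper simply cites the relevant constructions (Propositions~\ref{coarseBMS}--\ref{almost2exact}, \cite{abem}, and \cite[Theorem~C, Proposition~D]{albgafa} respectively), whereas you unpack exactly why those constructions yield $\pi_{rt,\ast}\mu_r=\mu_t$ and $\pi_{r,\ast}\mu_r=\mu$ --- your observation that $\mu_r$ is literally $\mu$ transported along rays makes this transparent and yields the strongly averaging conclusion directly. The one place your treatment differs slightly is case~2: the paper argues that the Patterson--Sullivan measure is itself obtained as a weak limit of the uniform distributions on $\qcg_o\cap\sro$ (invoking Patterson's trick when the Poincar\'e series converges at $v$), so condition~(2) is immediate from the construction; you instead route everything through the shadow lemma and \cite[Proposition~3.11]{calegarimaher}, which is equivalent but has the advantage of not needing to split into the divergent/convergent cases, since the shadow estimates hold once $\mu$ exists.
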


\begin{proof}
	Example (1): This follows from the construction of the Bowen-Margulis measure in Propositions \ref{coarseBMS} and \ref{almost2exact}.\\
	Example (2): This follows from the construction of the Patterson-Sullivan measure \cite{coornert-pjm} when the \poin series diverges. Else $|B_n| \asymp exp(vn)/g(n)$ for a subexponentially growing function $g$. In this case, Patterson's trick \cite{patterson-acta,sullivan-acta84}  of multiplying the terms of the \poin series by $g(n)$ gives back the growth function of $\qcg_o \cap \sro$. A Patterson-Sullivan measure is then obtained as a weak limit of the uniform distribution on $\qcg_o \cap \sro$.\\
	Example (3): This follows from the construction of the  measure $\mu_x$ from the Thurston measure $\mu$ in Section \ref{sec:psmteich} \cite{abem}.\\
	Example (4): This follows from \cite[Theorem C, Proposition D, p. 4]{albgafa}.
\end{proof}

\begin{defn}\label{def-sa} {\rm Let $(G,X,\L)$ be one of the four examples \ref{egs}. Let $M=X/G$
		and $y_0\in M$ be a base-point.
		Let $f_0: M \to \R_+$ be a function. Let $f$ be the lift of $f_0$ to $X$ and $o$ be a lift of $y_0$. 
		The family of expectations $\{\E_r(f) = \int_{\sro} f d\mu_r \}$   will be called the {\bf spherical averages} for the triple $(X,G,f)$ with respect to the base-point $o$.
		
		Let $v$ denote the  dimension of the conformal or quasiconformal density  $\mu$ on $\L$. For $f_0: M \to \R_+$ given by $f_0(w) =  exp(-v d_M(y_0,w))$,  $\fex=exp(-v \, d_X(x,G.o))$ will denote the lift of $f_0$ to $X$. The  spherical averages $\{\E_r(\fex)\}$ for  $(X,G,\fex)$, given by 
		\begin{equation}\label{eq:fex}
		\E_r(\fex) = \int_{\s_r(o)} exp(-v \, d_X(x,G.o)) d\mu_r(x) = \int_{\s_r(o)} \fex (x) \, d\mu_r(x),
		\end{equation}
		will be called {\bf extremal spherical averages} with respect to the base-point $o$.}
\end{defn}

Note that the domain of $ \fex$ is $X$ (and hence it can be integrated over $\s_r(o)$).

\subsection{A sufficient condition for non-vanishing ECG}
Recall that $V_r =  e^{vr}$ (Equation \ref{eq-vn}) and $B_r = \{g \in G \vert g.o \in B(o,r) \}$. We write $B_r.o= \{g.o \ \vert g \in B_r \}$ and define:
\begin{equation}\label{eq-fexr}
\fexr (\xi) = exp(-v \, d_X(\pi_r(\xi), B_r.o)), \, \xi \in \L,\end{equation}
\begin{equation*}
\fexr (x) = exp(-v \, d_X(\pi_{sr}(x), B_r.o)), \, x \in X, \, d_X(o,x) = s\geq r.
\end{equation*}
The domain of $ \fexr$ is $X \cup \partial X$ and hence it can be integrated over both $\s_s(o)$ and $\lag$. Define
\begin{equation}\label{eq:efexr}
\E_r(\fexr) = \int_{\s_r(o)} exp(-v \, d_X(x,B_r.o)) d\mu_r(x) = \int_{\s_r(o)} \fexr (x) \, d\mu_r(x),
\end{equation}

We shall write $\omrk = 
(B_{r+k}.o \setminus B_r.o)$ to denote the collection of orbit-points in the shell $(B(o,r+k) \setminus B(o,r))$.\\

\noindent{\bf Relating $\fex$ and $\fexr$:}
\begin{lemma}\label{lem-fexfexr}
	Let $\{\mu_r\}$ be averaging measures on $\sro$  with respect to $\mu$ as in the four examples \ref{egs}. For any $m > 0$ and $k \in \natls$, there exist $K, R_0 \geq 1$ such that for $r \geq R_0$,
	\begin{align*}
	K \mu_r (\{x\in\sro \vert d(x, B_r.o)\leq m\}) &\geq& \mu_r (\{x\in\sro \vert d(x, B_{r+k}.o)\leq m\}) \\
	&\geq&\mu_r (\{x\in\sro \vert d(x, B_r.o)\leq m\}).
	\end{align*}
\end{lemma}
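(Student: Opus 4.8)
The plan is to dispose of the lower inequality for free and to extract the upper one from the Sullivan shadow lemma together with the averaging-measure identity \eqref{eq-projmre}. For the lower inequality there is nothing to do: since $B_r\subseteq B_{r+k}$ we have $B_r.o\subseteq B_{r+k}.o$, hence $d(x,B_{r+k}.o)\le d(x,B_r.o)$ for every $x$, so $\{x\in\sro:d(x,B_r.o)\le m\}\subseteq\{x\in\sro:d(x,B_{r+k}.o)\le m\}$, with no restriction on $r$. For the upper inequality I would write $\omrk=B_{r+k}.o\ssm B_r.o$, so that $B_{r+k}.o=B_r.o\cup\omrk$ and
\[
\{x\in\sro:d(x,B_{r+k}.o)\le m\}=\{x\in\sro:d(x,B_r.o)\le m\}\cup\{x\in\sro:d(x,\omrk)\le m\};
\]
it then suffices to produce $K'=K'(m,k)$ and $R_0$ with $\mu_r(\{x\in\sro:d(x,\omrk)\le m\})\le K'\,\mu_r(\{x\in\sro:d(x,B_r.o)\le m\})$ for $r\ge R_0$, and take $K=K'+1$.

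To estimate the two sides, suppose $x\in\sro$ and $d(x,p)\le m$ with $p\in\omrk$. Then $r<d(o,p)\le d(o,x)+d(x,p)=r+m$ and $p\in B_{r+k}.o$, so $d(o,p)\in(r,\,r+\min(k,m)]$; hence the point $\bar p$ where the geodesic $[o,p]$ crosses $\sro$ (defined only up to the $\delta$-ambiguity of $\pi_{rt}$ in case (2)) satisfies $d(\bar p,p)=d(o,p)-r\le\min(k,m)$, so $d(x,\bar p)\le 2m$. Consequently $\{x\in\sro:d(x,\omrk)\le m\}$ is covered by the sets $B(\bar p,2m)\cap\sro$ with $p\in\omrk$, $d(o,p)\le r+\min(k,m)$, and only those $p$ within a bounded neighborhood of the weak hull $CH(\L)$ carry positive $\mu_r$-mass. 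For such $p$, I would use \eqref{eq-projmre} to replace $\mu_r$ by the $\pi_r$-pushforward of $\mu$ and then invoke the Sullivan shadow lemma — which holds in each of our four settings (Coornaert \cite{coornert-pjm} in cases (1)--(2), Roblin \cite{roblin-memo}, Albuquerque \cite{albgafa} in case (4), \cite{abem} in case (3)) — to obtain $\mu_r(B(\bar p,2m)\cap\sro)\asymp e^{-v\,d(o,p)}$. Proper discontinuity of the $G$-action makes these shadows of bounded multiplicity over $p$ with $d(o,p)$ in a window of bounded length, so summing gives $\mu_r(\{x\in\sro:d(x,\omrk)\le m\})\asymp\sum e^{-v\,d(o,p)}$, the sum over $p\in\omrk$ near $CH(\L)$ with $d(o,p)\le r+\min(k,m)$; likewise $\mu_r(\{x\in\sro:d(x,B_r.o)\le m\})\asymp\sum e^{-v\,d(o,p)}$ over $p\in B_r.o$ near $CH(\L)$ with $d(o,p)\ge r-m$.

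What remains — and what I expect to be the hard part — is to compare these two weighted orbit sums uniformly in $r$. Both run over shells of bounded width about radius $r$, on which the weights $e^{-v\,d(o,p)}$ are all comparable to $e^{-vr}$, so the bound I need is equivalent to the assertion that the partial Poincar\'e sum $S(\rho)=\sum_{d(o,p)\le\rho,\ p\in G.o\ \text{near }CH(\L)}e^{-v\,d(o,p)}$ has increments $S(\rho)-S(\rho-a)$, over bounded windows $a$, that are uniformly comparable for $\rho\ge R_0$. I would derive this regularity from the counting/mixing input of Section \ref{sec:mix}: in the CAT(-1) case from Roblin's dichotomy (Theorem \ref{roblindich}) — when $\mubm(UM)<\infty$ one even gets $S(\rho)-S(\rho-a)\asymp a$ — and in the remaining cases from the equidistribution Theorems \ref{em}, \ref{hom} and \ref{teichmix} together with the divergence/convergence behavior of the Poincar\'e series. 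The bookkeeping for the $\delta$-coarseness of $\pi_{rt}$ in case (2) is routine, and is precisely what Definitions \ref{def-avgingmreconf}--\ref{def-avgingmreqc} are set up to absorb.
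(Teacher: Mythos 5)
Your route is genuinely different from the paper's, and it is worth being explicit about what each one buys. You convert both $\mu_r$-masses into weighted orbit sums via the Sullivan shadow lemma and then compare the resulting partial Poincar\'e sums directly. The paper argues softly: it uses only that $\mu$ is a Patterson--Sullivan density, hence arises as a weak limit of (normalized, weighted) counting measures on shells (with Patterson's auxiliary weighting in the convergent case). From this it deduces that any two subsequential limits of uniform distributions on the inner $k$-shell $\omrkm$ and the outer $k$-shell $\omrk$ are mutually absolutely continuous with pointwise Radon--Nikodym derivative in a fixed interval $[1/C_0^2, C_0^2]$, and then it runs a compactness/contradiction argument: if no $K$ existed, one could extract subsequential limits $\mu_-$ (inner) and $\mu_+$ (outer) together with a set $U\subset\L$ with $\mu_+(U)=0$ but $\mu_-(U)>0$, contradicting that absolute continuity. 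The paper's argument therefore needs no counting input at all and treats the four cases of \ref{egs} uniformly.

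The gap in your argument is the step you yourself flag as ``the hard part,'' and it is not closable with the tools you list in full generality. Reducing the upper inequality to uniform comparability of the increments $S(\rho)-S(\rho-a)$ of the partial Poincar\'e sum is fine, and you close it via Roblin's asymptotic $V_G(x,y,n)\asymp c_G(x,y)e^{vn}$ in the CAT(-1) case with $\mubm(UM)<\infty$, and in principle via the mixing/equidistribution theorems in cases (3) and (4) and Coornaert's $|B_n|\asymp e^{vn}$ in the Cayley-graph case (2). But the lemma as stated carries no finiteness assumption on $\mubm$, and in the CAT(-1) case with $\mubm(UM)=\infty$ Roblin only gives $V_G(x,y,n)=o(e^{vn})$; this is an upper bound on the shell counts, not a statement of regularity, and proper discontinuity plus divergence of the Poincar\'e series do not preclude one bounded-width shell from being much heavier than the next. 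Your phrase ``together with the divergence/convergence behavior of the Poincar\'e series'' is standing in for precisely the regularity you still need to establish. The paper's weak-limit formulation is what supplies that regularity automatically, because Patterson's construction (after the weighting trick, when needed) is designed exactly so that the shell measures converge to a density comparable to $\mu$.
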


\begin{proof}
	The second inequality is clear and we need only to prove the first.
	We use the fact that the measure $\mu$ in all 
	the examples in \ref{egs} are Patterson-Sullivan densities.  In particular, it follows from the construction of Patterson-Sullivan measures  that for every $k \in \natls$, there exists $C_0 \geq 1$ such that if $\nu^k$ is any limit of uniform probability 
	distributions on the $k-$shells
	$\omrk$ as $r\to \infty$, then $1/C_0 \leq \frac{d\nu^k}{d\mu} (\xi) \leq C$ for all $\xi \in \L$. Hence the uniform probability distributions on the `inner' $k-$shell $\omrkm$ and the `outer' $k-$shell $\omrk$ about $\sro$ are close to each other: more precisely any two  limits of uniform
	distributions on $\omrkm$ and  $\omrk$ are absolutely continuous with respect to each other with pointwise Radon-Nikodym derivative lying in $[1/C^2, C^2]$.
	
	Now, we use the fact that $\{\mu_r\}$ is a sequence of averaging measures.  We argue by contradiction. Suppose that for some fixed $m$,  no $K \geq 1$ exists as in the conclusion of the Lemma. We pass to the limit as
	$r\to \infty$. Extracting subsequential limits if necessary,  there exists a limit $\mu_-$ of the inner shell measures, a  limit $\mu_+$ of the outer shell measures and a measurable subset $U \subset \L$ such that
	$\mu_+ (U) =0$ while $\mu_- (U) > 0$.
	This contradicts the absolute continuity in the last sentence of the previous paragraph, proving the Lemma.
\end{proof}

\begin{cor}\label{cor-fexfexr}
	Let $\{\mu_r\}$ be averaging measures on $\sro$  with respect to $\mu$ as in \ref{egs}. For any $\ep > 0$  there exists $K, R_0 \geq 1$ such that for $r \geq R_0$,
	\begin{align*}
	K \mu_r (\{x\in\sro \vert \fexr (x) \geq \ep\}) &\geq& \mu_r (\{x\in\sro \vert \fex (x) \geq \ep\}) \\
	&\geq&\mu_r (\{x\in\sro \vert \fexr (x) \geq \ep\}).
	\end{align*}
\end{cor}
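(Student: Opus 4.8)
The plan is to deduce the corollary directly from Lemma~\ref{lem-fexfexr} by translating the condition $\fexr(x)\geq\ep$ (resp. $\fex(x)\geq\ep$) into a metric condition of the form $d(x,B_r.o)\leq m$ (resp. $d(\pi_r(\xi),G.o)\leq m$) for a suitable $m=m(\ep)$. Fix $\ep>0$ and set $m:=-\frac{1}{v}\log\ep$, so that for $x\in\sro$ one has $\fexr(x)=\exp(-v\,d(x,B_r.o))\geq\ep$ if and only if $d(x,B_r.o)\leq m$; thus the outer and inner quantities of the corollary are exactly $\mu_r(\{x\in\sro\mid d(x,B_r.o)\leq m\})$. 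The middle quantity involves $\fex$, whose value at $x\in\sro$ is $\exp(-v\,d(\pi_{r,\infty?}(x),G.o))$; here I would use the definition $\fex(x)=\exp(-v\,d_X(x,G.o))$ restricted to the sphere $\sro$ together with the coarse projection $\pi_r$ implicit in Definition~\ref{def-sa}, so that $\{x\in\sro\mid\fex(x)\geq\ep\}=\{x\in\sro\mid d(x,G.o)\leq m\}$.

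The second inequality (middle $\geq$ right) is immediate, since $B_r.o\subset G.o$ forces $d(x,G.o)\leq d(x,B_r.o)$, hence $\{d(x,B_r.o)\leq m\}\subseteq\{d(x,G.o)\leq m\}$, and $\mu_r$ is a measure. For the first inequality (left $\gtrsim$ middle) the point is that, although $G.o$ is much larger than $B_r.o$, the portion of $G.o$ that can lie within distance $m$ of a point $x$ on the sphere $\sro$ is confined to a bounded shell: any $g.o$ with $d(x,g.o)\leq m$ satisfies $|d(o,g.o)-r|\leq m$, i.e. $g\in B_{r+k}\setminus B_{r-k}$ for $k=\lceil m\rceil$. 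Hence $\{x\in\sro\mid d(x,G.o)\leq m\}\subseteq\{x\in\sro\mid d(x,B_{r+k}.o)\leq m\}$, and combining Lemma~\ref{lem-fexfexr} applied with this $m$ and $k$ (which supplies $K,R_0\geq 1$ with $K\,\mu_r(\{d(x,B_r.o)\leq m\})\geq\mu_r(\{d(x,B_{r+k}.o)\leq m\})$ for $r\geq R_0$) gives
\begin{equation*}
K\,\mu_r(\{x\in\sro\mid\fexr(x)\geq\ep\})\;\geq\;\mu_r(\{x\in\sro\mid\fex(x)\geq\ep\})
\end{equation*}
for all $r\geq R_0$, as desired.

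The only genuinely delicate point — and the step I expect to need the most care — is the bookkeeping identifying $\{x\in\sro\mid\fex(x)\geq\ep\}$ with a metric ball-distance condition and then containing it inside a $(r+k)$-shell condition: in the coarsely-defined hyperbolic-graph case the projection $\pi_r$ is only defined up to an error of at most $\delta$, so one should enlarge $m$ by an additive constant ($m\rightsquigarrow m+\delta$, still a fixed constant depending only on $\ep$ and $X$) before invoking Lemma~\ref{lem-fexfexr}; this is harmless since $k=\lceil m+\delta\rceil\in\natls$ is still a fixed integer and $K,R_0$ produced by the lemma depend only on $m$ and $k$. All other manipulations are the monotonicity of $\mu_r$ and the elementary inequality $\fexr\geq$ something implies a distance bound, so no further obstacle arises.
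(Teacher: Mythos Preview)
Your proposal is correct and follows essentially the same route as the paper: translate $\fexr\geq\ep$ and $\fex\geq\ep$ into the metric conditions $d(x,B_r.o)\leq m$ and $d(x,G.o)\leq m$, observe that for $x\in\sro$ any orbit point within distance $m$ lies in $B_{r+k}.o$ with $k=\lceil m\rceil$, and invoke Lemma~\ref{lem-fexfexr}. The paper does exactly this (with the cosmetic difference of taking $m=k$ an integer from the start); your final paragraph about adjusting $m$ by $\delta$ for the coarse projection is unnecessary here, since for $x\in\sro$ both $\fex$ and $\fexr$ are evaluated directly at $x$ and no projection enters.
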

\begin{proof}
	Choose $k\in \natls$ such that $e^{-kv} \leq  \ep < e^{-(k-1)v}$ and let $m=k$. For  $x \in \sro$,  $\fex(x) \geq \ep$  implies $d(x, B_{r+k}.o)\leq m$. The Corollary now follows from Lemma \ref{lem-fexfexr}.
\end{proof}

We are now in a position to state a sufficient condition guaranteeing non-vanishing ECG.

\begin{prop}\label{prop-crucialsuffcond} Let $(G,X,\L)$ be as in cases
	1,3, or 4 of setup \ref{setup} with $M=X/G$ and $o\in X$ a base-point. Let $P: X \to M$ be the quotient map, $P(o)=y_0$. Let $\{\mu_r\}$ be averaging measures on $\sro$  with respect to $\mu$ as in \ref{egs}.
	Suppose that there exists $c, R_0 >0$ and  $1\geq \alpha >0$ such that for all $r \geq R_0$,  $$\mu_r(\{x\in \sro \vert d_M(P(x),y_0) \leq c \}) \geq \alpha.$$ Then the action of $G$ on $(\L,\mu)$  has non-vanishing
	extremal cocycle growth.
\end{prop}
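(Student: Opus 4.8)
The plan is to produce, for every sufficiently large $n$, a subset of $\L$ of $\mu$-measure bounded below by a fixed positive constant on which $A_n(\xi)$ is comparable to $V_n$; integrating this bound over $\L$ then keeps $C_n=\bbar{A_n}/V_n$ bounded away from zero, which is precisely non-vanishing ECG.

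First I would recast the hypothesis in terms of $\fex$. Since $G$ acts freely and properly discontinuously by isometries, $d_M(P(x),y_0)=d_X(x,G.o)$, so with $\ep:=e^{-vc}$ the assumption reads $\mu_r(\{x\in\sro:\fex(x)\ge\ep\})\ge\alpha$ for $r\ge R_0$. Feeding this into Corollary~\ref{cor-fexfexr} (legitimate, as $\{\mu_r\}$ is a sequence of averaging measures, Example~\ref{egs}) trades $\fex$ for $\fexr$: there are $K,R_0'\ge1$ with $\mu_r(\{x\in\sro:\fexr(x)\ge\ep\})\ge\alpha/K$ whenever $r\ge\max(R_0,R_0')$. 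Next I would transport this estimate from $\sro$ to $\L$ along the projection $\pi_r\colon\L\to\sro$. Since $\fexr(\xi)=\fexr(\pi_r(\xi))$ by \eqref{eq-fexr}, the relevant boundary set equals $\pi_r^{-1}$ of the sphere set, so \eqref{eq-projmre} gives
\[
\mu\bigl(\{\xi\in\L:\fexr(\xi)\ge\ep\}\bigr)=\pi_{r,\ast}\mu\bigl(\{x\in\sro:\fexr(x)\ge\ep\}\bigr)\ge\frac{1}{C}\,\mu_r\bigl(\{x\in\sro:\fexr(x)\ge\ep\}\bigr)\ge\frac{\alpha}{CK}
\]
for all $r\ge\max(R_0,R_0')$.

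The core step is to convert $\fexr(\xi)\ge\ep$ into a lower bound on $A_r(\xi)$. If $\fexr(\xi)\ge\ep=e^{-vc}$ then $d_X(\pi_r(\xi),B_r.o)\le c$, so there is $g=g(\xi)\in B_r$ with $d_X(\pi_r(\xi),g.o)\le c$. In each of cases $1,3,4$ the point $\pi_r(\xi)$ is, by the construction of the averaging measures, the time-$r$ point of a geodesic ray from $o$ limiting to $\xi$ (a CAT(-1) geodesic in case $1$, a Teichm\"uller geodesic in case $3$, the barycentric ray in case $4$), and along such a ray the Busemann cocycle $\beta_\xi(o,\cdot)$ is arc length, so $\beta_\xi(o,\pi_r(\xi))=r$. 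Because $\beta_\xi$ is a $1$-Lipschitz cocycle,
\[
\beta_\xi(o,g.o)=\beta_\xi(o,\pi_r(\xi))+\beta_\xi(\pi_r(\xi),g.o)\ge r-d_X(\pi_r(\xi),g.o)\ge r-c .
\]
Since $g\in B_r$, Definition~\ref{qcdens} (equivalently \eqref{eq:ecg}) gives $A_r(\xi)\ge C^{-1}\exp(v\beta_\xi(o,g.o))\ge C^{-1}e^{v(r-c)}=C^{-1}e^{-vc}V_r$. Integrating over the set obtained in the previous paragraph then yields $\bbar{A_r}\ge C^{-1}e^{-vc}V_r\cdot\mu(\{\fexr\ge\ep\})\ge\alpha e^{-vc}C^{-2}K^{-1}V_r$, whence $C_r\ge\alpha e^{-vc}C^{-2}K^{-1}$ for all large $r$ and therefore $\liminf_{n\to\infty}C_n>0$.

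The step I expect to be the main obstacle is the geometric one just used: justifying that $\pi_r(\xi)$ lies on (or at least within a bounded distance of) a geodesic ray from $o$ to $\xi$ along which $\beta_\xi(o,\cdot)$ restricts to arc length. This is transparent for CAT(-1) spaces, but in the Teichm\"uller case it relies on the Teichm\"uller ray in the direction of $\xi$ meeting $\sro$ exactly at $\pi_r(\xi)$ and on $\operatorname{Ext}_\xi$ decaying precisely like $e^{-2r}$ along it, so that the formula of \cite{abem} gives $\beta_\xi(o,\pi_r(\xi))=r$; in the higher-rank case it relies on Albuquerque's normalization of the Patterson--Sullivan density on the Furstenberg boundary \cite{albgafa}. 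In both of these settings one also needs the relevant ray to converge to $\xi$ in the horofunction boundary, so that $\beta_\xi(o,\pi_r(\xi))$ really equals $d_X(o,\pi_r(\xi))$ and not merely a smaller quantity.
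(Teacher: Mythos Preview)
Your argument is correct and follows essentially the same route as the paper: recast the hypothesis as a lower bound on $\mu_r(\{\fex\ge\ep\})$, apply Corollary~\ref{cor-fexfexr} to pass to $\fexr$, transfer the bound to $\L$ via \eqref{eq-projmre}, and then use \eqref{eq:ecg} to conclude. The only difference is one of explicitness: where the paper simply invokes \eqref{eq:ecg} to obtain $C_r\gtrsim(\alpha/K_2)\ep$, you unpack precisely the geometric content of that step---namely, that on $\{\fexr\ge\ep\}$ there is $g\in B_r$ with $\beta_\xi(o,g.o)\ge r-c$, which is exactly what feeds into \eqref{eq:ecg}. Your case-by-case discussion of why $\beta_\xi(o,\pi_r(\xi))=r$ in cases 1, 3, 4 is the detail the paper leaves implicit in its appeal to \eqref{eq:ecg}, and your caution there is appropriate but not an obstacle: these facts are established in the cited references (Roblin, \cite{abem}, \cite{albgafa}) underpinning the conformal-density framework.
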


\begin{proof} Let $\ep = e^{-cv}$.
	It follows by hypothesis, that for all $r \geq R_0$, $$\mu_r (\{x\in\sro \vert \fex (x) \geq \ep\}) \geq \alpha.$$  Hence by Corollary \ref{cor-fexfexr}, there exists $K \geq 1$ such that for all $r \geq R_0$
	$$\mu_r (\{x\in\sro \vert \fexr (x) \geq \ep\}) \geq \alpha/K.$$ Let
	$\sro(\ep):=\{x\in\sro \vert \fexr (x) \geq \ep\}$ and let $\L(r,\ep):=
	\{\xi \in\L \vert \pi_r (\xi) \in \sro(\ep)\}$. Since $\{\mu_r\}$ is a family of averaging measures, there exists $K_1, R_1 \geq 1$ such that for all $r \geq R_1$, $\mu(\L(r,\ep) \geq \alpha/K_1.$ Hence, from Equation \ref{eq:ecg}, there exists $K_2 \geq 1$ such that for all $r \geq R_1$, $$C_r \gesim  (\alpha/K_2) \ep.$$
	Thus, $\liminf_{r \to \infty} C_r >0$; equivalently, the action of $G$ on $(\L,\mu)$  has non-vanishing
	extremal cocycle growth.
\end{proof}

\subsection{ECG for hyperbolic spaces}
When $X$ is Gromov-hyperbolic, i.e.\ Cases 1, 2 of Setup \ref{setup} we can say more.
\begin{prop}\label{cr} Let $(G,X,\L)$ and $\mu$ be as in Cases 1, 2 of Setup \ref{setup}. Let $\{\mu_r\}$ be a family of averaging measures as in \ref{egs}. Then there exists $R_0$ such that for $r \geq R_0$,
	\begin{equation}\label{eq:fexr}
	C_r  \asymp \int_\L \fexr(q) d\mu(q) \asymp  \int_{\s_r(o)} \fexr(x) d\mu_r(x).
	\end{equation}
\end{prop}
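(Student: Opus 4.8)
The plan is to unwind the definition of $C_r$ given by Equation~\eqref{eq:ecg}, namely
\[
C_r \asymp \frac{1}{V_r}\int_\L \max_{g\in B_r}\exp\!\bigl(v\,\beta_\xi(o,g.o)\bigr)\,d\mu(\xi),
\]
and to identify the integrand, up to multiplicative constants, with $\fexr(\xi)=\exp(-v\,d_X(\pi_r(\xi),B_r.o))$. The key geometric fact, valid because $X$ is Gromov-hyperbolic and $\mu$ is a (quasiconformal) Patterson--Sullivan density, is that for a point $\xi\in\L$ with projection $\pi_r(\xi)\in\s_r(o)$, one has
\[
\max_{g\in B_r}\beta_\xi(o,g.o) \;\simadd\; r - d_X(\pi_r(\xi),B_r.o),
\]
up to an additive error depending only on $\delta$. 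Indeed, for $g\in B_r$, $\beta_\xi(o,g.o)=d_X(o,z)-d_X(g.o,z)+O(\delta)$ for $z$ on the ray to $\xi$ far out; choosing $z=\pi_s(\xi)$ for $s$ large and using thin triangles, $\beta_\xi(o,g.o)\simadd 2\pair{\xi}{g.o}_o - d_X(o,g.o)$, and the Gromov product $\pair{\xi}{g.o}_o$ is, up to $O(\delta)$, the distance from $o$ to the point where the geodesic $[o,g.o]$ fellow-travels the ray $[o,\xi)$; maximizing over $g\in B_r$ this quantity is $r$ minus the distance from $\pi_r(\xi)$ to the nearest orbit point in $B_r.o$, again up to $O(\delta)$. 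Exponentiating and dividing by $V_r=e^{vr}$ gives $C_r\asymp \int_\L \exp(-v\,d_X(\pi_r(\xi),B_r.o))\,d\mu(\xi)=\int_\L\fexr(\xi)\,d\mu(\xi)$, which is the first $\asymp$ in Equation~\eqref{eq:fexr}.

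For the second $\asymp$, relating $\int_\L\fexr\,d\mu$ to $\int_{\s_r(o)}\fexr\,d\mu_r$, I would use that $\{\mu_r\}$ is a family of averaging measures: by Equation~\eqref{eq-projmre} the pushforward $\pi_{r,\ast}\mu$ and $\mu_r$ are mutually absolutely continuous on $\s_r(o)$ with Radon--Nikodym derivative in $[1/C,C]$, and $\fexr$ on $\L$ is by definition the pullback $\fexr\circ\pi_r$ of $\fexr$ on $\s_r(o)$ (since $\fexr(\xi)=\exp(-v\,d_X(\pi_r(\xi),B_r.o))$). Hence
\[
\int_\L \fexr(\xi)\,d\mu(\xi) = \int_{\s_r(o)} \fexr(x)\,d(\pi_{r,\ast}\mu)(x) \asymp \int_{\s_r(o)}\fexr(x)\,d\mu_r(x),
\]
with comparison constants depending only on $C$. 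Choosing $R_0$ to absorb the finitely many $O(\delta)$ errors (they become genuine multiplicative constants after exponentiation) and to ensure $\pi_r$ behaves as in the averaging-measure hypotheses, this completes the chain.

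The main obstacle is the first step: making precise the comparison $\max_{g\in B_r}\beta_\xi(o,g.o)\simadd r - d_X(\pi_r(\xi),B_r.o)$ in a genuinely Gromov-hyperbolic (as opposed to CAT($-1$)) setting, where Busemann functions are only coarsely defined via the $\limsup$ in Equation~\eqref{eq:buse} and the ``projection'' $\pi_r$ is itself only coarsely well-defined (its image has diameter $\le\delta$). One must check that the $O(\delta)$ ambiguities in $\beta_\xi$, in the choice of $\pi_r(\xi)$, and in identifying the Gromov product with a fellow-traveling length all combine into a single uniform additive constant, independent of $r$ and $\xi$; this is where the properness and uniform hyperbolicity of $X$ (Cases 1 and 2 of Setup~\ref{setup}) are essential, and where one invokes standard thin-triangle estimates together with the defining inequality~\eqref{eq-qc} for quasiconformal densities.
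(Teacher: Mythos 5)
Your proposal is correct and follows essentially the same two-step route as the paper's proof. Your Gromov-product derivation $\beta_\xi(o,g.o)\simadd 2\langle\xi,g.o\rangle_o-d_X(o,g.o)$, giving $\max_{g\in B_r}\beta_\xi(o,g.o)\simadd r-d_X(\pi_r(\xi),B_r.o)$, reproduces the paper's Corollary \ref{betavsspha} (obtained there via Lemma \ref{npp} on nearest-point projections and quasigeodesics, which is precisely the thin-triangle device that resolves the ``main obstacle'' you flag at the end), while your direct use of the pushforward relation in Equation \eqref{eq-projmre} for the second $\asymp$ is the same comparison the paper phrases via the Sullivan shadow lemma.
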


To prove Proposition \ref{cr} we shall need the following basic Lemma from hyperbolic geometry (see, for instance, \cite[Lemma 3.3]{mitra-trees} or \cite[Lemma 3.3]{mitra-ct} for a proof):

\begin{lemma}\label{npp} Given $\delta, C \geq 0$ there exists $C'$ such that the following holds:\\
	Let $X$ be a $\delta-$hyperbolic metric space and $K\subset X$ be $C-$quasiconvex. For any $p \in X\setminus K$, let $\pi_K(p)$ denote a nearest point projection of $p$ onto $K$ and let $[p,\pi_K(p)]$ be the geodesic segment joining $p, \pi_K(p)$. For  $k \in K$, let $[\pi_K(p),k]$ be the geodesic segment joining $\pi_K(p),k$. Then $[p,\pi_K(p)]\cup [\pi_K(p),k]$ is a $(C',C')-$quasigeodesic.
\end{lemma}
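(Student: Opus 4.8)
The plan is to prove this directly, essentially by hand, obtaining a constant $C'$ that depends only on $\delta$ and $C$. Write $q := \pi_K(p) \in K$ and let $\sigma$ be the concatenated path $[p,q]\cup[q,k]$, parametrised by arclength on an interval $[0,\ell]$ with $\ell = d(p,q)+d(q,k)$, so that $\sigma(0)=p$, $\sigma(d(p,q))=q$ and $\sigma(\ell)=k$. First I would replace the quasiconvexity constant $C$ by $\widetilde C := C+2\delta$, so that (using $\delta$-hyperbolicity) \emph{every} geodesic joining two points of $K$ --- in particular the chosen geodesic $[q,k]$ --- lies within distance $\widetilde C$ of $K$; this is the only place hyperbolicity is used. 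Note also that $d(p,q)=d(p,K)$ because $q$ is a nearest-point projection (if $\pi_K$ is taken only coarsely nearest, this introduces a fixed additive constant, which propagates harmlessly through the argument).

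The heart of the proof is a single estimate: for every $x\in[p,q]$ and every $y\in[q,k]$,
\[
d(x,q)+d(q,y)\ \le\ 3\,d(x,y)+2\widetilde C .
\]
To obtain it, choose $k_y\in K$ with $d(y,k_y)\le\widetilde C$; then $d(x,K)\le d(x,k_y)\le d(x,y)+\widetilde C$, and since $x$ lies on the geodesic $[p,q]$ and $d(p,q)=d(p,K)\le d(p,x)+d(x,K)$, we get $d(x,q)=d(p,q)-d(p,x)\le d(x,K)\le d(x,y)+\widetilde C$. Consequently $d(q,y)\le d(q,x)+d(x,y)\le 2d(x,y)+\widetilde C$, and adding the two inequalities gives the claim. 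The geometric content is simply that a point of $[p,q]$ that is close to $[q,k]$ must already be close to $q$, because $[q,k]$ stays on the far side of $K$ from $p$.

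To finish, I would feed this into the definition of a quasigeodesic. For $0\le s\le t\le\ell$ the upper bound $d(\sigma(s),\sigma(t))\le t-s$ is immediate since $\sigma$ is $1$-Lipschitz. For the lower bound: if $\sigma(s)$ and $\sigma(t)$ lie on the same geodesic piece then $d(\sigma(s),\sigma(t))=t-s$ exactly; otherwise $\sigma(s)\in[p,q]$ and $\sigma(t)\in[q,k]$, the arclength of $\sigma$ between them equals $d(\sigma(s),q)+d(q,\sigma(t))=t-s$, and the core estimate gives $t-s\le 3\,d(\sigma(s),\sigma(t))+2\widetilde C$, i.e. $d(\sigma(s),\sigma(t))\ge\tfrac13(t-s)-\tfrac23\widetilde C$. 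Hence $\sigma$ is a $(C',C')$-quasigeodesic with $C':=\max\{3,\,2\widetilde C\}=\max\{3,\,2C+4\delta\}$, which depends only on $\delta$ and $C$.

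I do not anticipate a genuine obstacle here; this is a standard fact of coarse geometry and the argument above is elementary. The only subtleties are conventional: whether ``nearest-point projection'' is exactly or only coarsely distance-minimising, and whether ``$C$-quasiconvex'' constrains one geodesic or all geodesics between two points of $K$ --- both handled by the additive-constant adjustments indicated above. A classical alternative would be to bound the Gromov product $\langle p,k\rangle_q$ by $O(C+\delta)$ using thinness of the triangle on $p,q,k$ together with the projection estimate, and then quote the standard lemma that two geodesics meeting with bounded Gromov product concatenate to a uniform quasigeodesic; the direct route above is shorter and keeps the constants explicit.
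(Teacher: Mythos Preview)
Your argument is correct. The paper does not supply its own proof of this lemma; it simply cites \cite[Lemma 3.3]{mitra-trees} and \cite[Lemma 3.3]{mitra-ct} as references, treating the statement as a standard fact of hyperbolic geometry. So there is no ``paper's proof'' to compare against, only the cited sources.

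Your direct approach is clean and has the pleasant feature that hyperbolicity enters only in the passage from $C$ to $\widetilde C=C+2\delta$; the core estimate $d(x,q)+d(q,y)\le 3\,d(x,y)+2\widetilde C$ uses nothing but the nearest-point property of $q$, the fact that $[q,k]$ stays $\widetilde C$-close to $K$, and the triangle inequality. The alternative you mention at the end (bounding the Gromov product $\langle p,k\rangle_q$ and quoting the concatenation lemma) is closer in spirit to how the cited references proceed, but your route is self-contained and makes the constants explicit. One cosmetic point: from $d(\sigma(s),\sigma(t))\ge\tfrac13(t-s)-\tfrac23\widetilde C$ the additive constant needed is only $\tfrac23\widetilde C$, so $C'=\max\{3,\tfrac23\widetilde C\}$ already suffices; your choice $\max\{3,2\widetilde C\}$ is of course also valid.
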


For $q \in \L$ and  $q' \in [o,q)$, $\pi_{r}(q')$ being uniformly close to $\pi_r(q)$,  we have the following  consequence of Lemma \ref{npp} using the fact that the balls $B(o,r)$ are $\delta-$quasiconvex. For
$q \in \partial X$ and $a \in X$, the geodesic ray from $a$ to $q$ is denoted as $[a,q)$.

\begin{cor}\label{nppfrominfty}
	Given $\delta>0$ there exists $C>0$ such that the following holds for any $r>0$: If $X$ is a $\delta-$hyperbolic metric space,  $W_r= \qcg_o \cap \s_r(o)$, $q \in \L$ and $k \in  (\qcg_o \cap B(o,r)) \subset X$, then $[k, \pi_r(q)]\cup [\pi_r(q),q)$ is a $(C,C)-$quasigeodesic and further, $d(\pi_r(q), [k,q) \cap W_r) \leq C$.
\end{cor}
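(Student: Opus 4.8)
The plan is to deduce Corollary~\ref{nppfrominfty} from Lemma~\ref{npp} by approximating the ideal point $q$ from inside $X$ along the ray $[o,q)$, exactly as anticipated in the sentence preceding the statement: the only inputs are that metric balls in a $\delta$-hyperbolic space are $\delta$-quasiconvex and that, for $q'\in[o,q)$, the point $\pi_r(q')$ is uniformly close to $\pi_r(q)$. Fix $q\in\L$ and a geodesic ray $\gamma\colon[0,\infty)\to X$ representing $[o,q)$ with $\gamma(0)=o$; after absorbing an $O(\delta)$ error into the eventual constant $C$ we may take $\pi_r(q)=\gamma(r)$. For $s>r$ put $p_s:=\gamma(s)$. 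Then $\gamma(r)$ is a nearest-point projection of $p_s$ onto $K:=B(o,r)$, since $[o,p_s]$ is an initial segment of $\gamma$ exiting $B(o,r)$ at $\gamma(r)$, and $K$ is $\delta$-quasiconvex. First I would apply Lemma~\ref{npp} with this $K$, with $p=p_s$, and with $k$ the given point of $\qcg_o\cap B(o,r)\subseteq K$, obtaining a constant $C'=C'(\delta)$, \emph{independent of $r$ and $s$}, such that $[k,\gamma(r)]\cup[\gamma(r),p_s]$ is a $(C',C')$-quasigeodesic.

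Next I would let $s\to\infty$. The subsegments $[\gamma(r),p_s]=\gamma|_{[r,s]}$ exhaust the ray $[\gamma(r),q)$, and the first piece $[k,\gamma(r)]$ has fixed length independent of $s$, so the arc-length parametrizations of $[k,\gamma(r)]\cup\gamma|_{[r,s]}$ converge, uniformly on compact parameter sets, to that of $[k,\gamma(r)]\cup[\gamma(r),q)$; since the $(C',C')$-quasigeodesic inequalities pass to such limits, $[k,\pi_r(q)]\cup[\pi_r(q),q)$ is a $(C',C')$-quasigeodesic, which is the first assertion (take $C\geq C'$). An alternative that avoids the limit is to record the limiting form of the projection estimate built into Lemma~\ref{npp}, namely that the Gromov product $\langle k,q\rangle_{\pi_r(q)}$ is bounded by a constant depending only on $\delta$ -- because $\pi_r(q)$ is a coarse nearest-point projection of $q$ onto $B(o,r)$ and $k\in B(o,r)$ -- and then quote the standard fact that a concatenation of two geodesics, one a ray to an ideal point, whose Gromov product at the junction is uniformly bounded is a uniform quasigeodesic.

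For the second assertion I would invoke stability of quasigeodesics (the Morse lemma) in the proper $\delta$-hyperbolic space $X$: the $(C',C')$-quasigeodesic ray $\beta:=[k,\pi_r(q)]\cup[\pi_r(q),q)$ and the genuine geodesic ray $[k,q)$ lie within Hausdorff distance $D=D(\delta,C')$ of one another. Along $\beta$ the function $d(o,\cdot)$ is $\leq r$ (up to $\delta$) on the first piece and strictly increasing past $\pi_r(q)$ on the second, so $\beta$ meets $\s_r(o)$ only within an $O(\delta)$-ball about $\pi_r(q)$; since $[k,q)$ fellow-travels $\beta$ within $D$ and starts inside $\overline{B(o,r)}$ while $d(o,\cdot)\to\infty$ along it, the crossings of $[k,q)$ with $\s_r(o)$ all lie within $D_1=D_1(\delta,C')$ of $\pi_r(q)$; pick such a crossing point $x_0$. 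Finally, the geodesic triangle with vertices $o,k,q$ is $\delta$-thin, so $[k,q)\subseteq N_\delta([o,k]\cup[o,q))\subseteq N_\delta(\qcg_o)$; and since $\qcg_o$ is a union of rays from $o$, every point of $\s_r(o)\cap N_\delta(\qcg_o)$ is within $O(\delta)$ of $W_r=\qcg_o\cap\s_r(o)$. Thus $x_0\in[k,q)$ is within $O(\delta)$ of $W_r$ and within $D_1$ of $\pi_r(q)$; since in this setting $\pi_r$, $[k,q)$, $\qcg_o$ and $W_r$ are each defined only up to $O(\delta)$, this is the asserted inequality $d(\pi_r(q),[k,q)\cap W_r)\leq C$ after a last enlargement of $C$.

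Every ingredient is standard $\delta$-hyperbolic geometry -- thin triangles, the coarse projection estimate, the Morse lemma -- so no real computation is needed. The two points that require a little care, and which I would treat as the main obstacles, are: (i) verifying that the uniform quasigeodesic constant $C'$ genuinely survives the passage $s\to\infty$ to the limiting ray (or, in the alternative route, that the extended Gromov product $\langle k,q\rangle_{\pi_r(q)}$ to the ideal point $q$ is set up and bounded correctly); and (ii) the bookkeeping in the last step that upgrades ``a point of $[k,q)$ near $W_r$'' to a genuine point of $[k,q)\cap W_r$ close to $\pi_r(q)$ -- harmless here only because every object in sight is coarse and all bounded errors can be absorbed into a single constant $C=C(\delta)$.
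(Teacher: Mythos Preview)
Your proposal is correct and follows exactly the approach indicated by the paper, which does not give an explicit proof but simply records the corollary as a consequence of Lemma~\ref{npp} together with the observations that balls $B(o,r)$ are $\delta$-quasiconvex and that $\pi_r(q')$ is uniformly close to $\pi_r(q)$ for $q'\in[o,q)$. You have filled in the standard hyperbolic-geometry details (approximation along the ray, the Morse lemma for the second assertion) that the paper leaves to the reader.
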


We restate the last statement of Corollary \ref{nppfrominfty} in the form that we shall use, unwinding the definition of the Busemann function $\beta_q(o,k)$ based at $q$:

\begin{cor}\label{betavsspha}
	Given $\delta>0$ there exists $C>0$ such that the following holds for any $r>0$: if $X$ is a $\delta-$hyperbolic metric space,  $W_r= \qcg_o \cap \s_r(o)$, $q \in \L$ and $k \in  \qcg_o \cap B(o,r) \subset X$, then $$|\beta_q(o,k) - (r-d(k, \pi_r(q)))| \leq C.$$
\end{cor}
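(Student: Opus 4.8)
The plan is to deduce Corollary~\ref{betavsspha} directly from Corollary~\ref{nppfrominfty} by translating the quasigeodesic statement into the estimate on the Busemann function. Fix $\delta$, let $C_0$ be the constant furnished by Corollary~\ref{nppfrominfty}, and take $q\in\L$, $k\in\qcg_o\cap B(o,r)$. Write $p=\pi_r(q)\in W_r$, so $d(o,p)=r$. By Corollary~\ref{nppfrominfty}, the concatenation $[k,p]\cup[p,q)$ is a $(C_0,C_0)$-quasigeodesic from $k$ to $q$.

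The key step is to estimate $\beta_q(o,k)=\limsup_{z\to q}(d(o,z)-d(k,z))$ using this quasigeodesic. First I would establish that along the ray $[p,q)$ the Busemann function is controlled: for $z$ far out on $[p,q)$, $d(p,z)-d(k,z)$ is within a uniform constant of $-d(k,p)$, because $[k,p]\cup[p,q)$ being a uniform quasigeodesic forces $d(k,z)\simadd d(k,p)+d(p,z)$ (the standard fact that points on a quasigeodesic are, up to an additive constant depending only on $\delta$ and the quasigeodesic constants, at distance the sum of the sub-lengths; this is Lemma~\ref{npp}-type reasoning together with $\delta$-thinness). Consequently $d(o,z)-d(k,z)\simadd (d(o,z)-d(p,z))-d(k,p)$. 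Now $z\to q$ along $[p,q)$ gives $\beta_q(o,p)$ as the limit of $d(o,z)-d(p,z)$ up to the usual additive ambiguity, and since $p$ lies on a geodesic (ray) to $q$ with $d(o,p)=r$ one has $\beta_q(o,p)\simadd r$ — more precisely $d(o,z)-d(p,z)$ tends to $r$ along $[p,q)$ up to an error bounded in terms of $\delta$ (using $d(o,z)\le d(o,p)+d(p,z)$ for the upper bound and $\delta$-hyperbolicity plus $p\in[o$-to-$q]$-type alignment for the lower bound, since $p=\pi_r(q)$ is a nearest point projection onto $B(o,r)$ and hence essentially on a geodesic from $o$ toward $q$). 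Combining, $\beta_q(o,k)\simadd r-d(k,p)=r-d(k,\pi_r(q))$, with the additive constant depending only on $\delta$; absorbing everything into a single constant $C$ gives the claim.

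A technical point to handle carefully is that the $\limsup$ in the definition~\eqref{eq:buse} of $\beta_q$ is over all $z\to q$, not only $z$ on the ray $[p,q)$; so I must check that restricting to the ray does not change the value by more than a uniform constant. This follows from $\delta$-hyperbolicity: any $z$ near $q$ lies within bounded distance of the ray $[p,q)$ once $z$ is deep enough in a horoball-like neighbourhood, equivalently $(z\cdot q)_p$ is large, so $d(o,z)-d(k,z)$ is determined up to a $\delta$-bounded error by the nearest point of $[p,q)$ to $z$; taking the $\limsup$ then agrees with the limit along the ray up to $O(\delta)$. I would also note the degenerate case $k=\pi_r(q)$ is trivial and that when $k\in\qcg_o\cap B(o,r)$ but the nearest point projection is non-unique, all choices of $\pi_r(q)$ differ by a $\delta$-bounded amount, so the statement is insensitive to the choice.

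The main obstacle is the bookkeeping of additive constants — making sure that each of the three comparisons ($d(k,z)\simadd d(k,p)+d(p,z)$, $d(o,z)-d(p,z)\simadd r$, and the $\limsup$-versus-limit-along-ray reduction) has its error controlled purely by $\delta$ (and the fixed quasigeodesic constants coming from Corollary~\ref{nppfrominfty}, which themselves depend only on $\delta$), with no hidden dependence on $r$, $k$, or $q$. Given the quasigeodesic input from Corollary~\ref{nppfrominfty} this is routine $\delta$-hyperbolic geometry, but it is exactly the place where a careless estimate would let the constant blow up, defeating the uniformity in $r$ that the later applications (Proposition~\ref{cr}) require.
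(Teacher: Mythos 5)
Your proof takes essentially the same route the paper intends: Corollary~\ref{betavsspha} is stated in the paper as nothing more than a ``restatement'' of Corollary~\ref{nppfrominfty} obtained by ``unwinding the definition of the Busemann function,'' and your argument is exactly that unwinding -- use the $(C_0,C_0)$-quasigeodesic $[k,\pi_r(q)]\cup[\pi_r(q),q)$ to get $d(k,z)\simadd d(k,\pi_r(q))+d(\pi_r(q),z)$ for $z$ deep on the ray, use the fact that $\pi_r(q)$ lies (coarsely) on a geodesic from $o$ to $q$ at distance $r$ to get $d(o,z)-d(\pi_r(q),z)\simadd r$, subtract, and control the passage from a limit along the ray to the $\limsup$ in \eqref{eq:buse}. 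The decomposition of the argument and the constants' dependence on $\delta$ alone are exactly what the paper relies on for Proposition~\ref{cr}.

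One small inaccuracy worth fixing: the justification you give for the $\limsup$-versus-ray step -- that ``any $z$ near $q$ lies within bounded distance of the ray $[p,q)$ once $(z\cdot q)_p$ is large'' -- is false as stated (in $\HHH^2$ one can approach $q$ at arbitrarily large distance from the ray). The conclusion you want is nevertheless true, but for a different reason: writing $d(o,z)-d(k,z)=2(k\mid z)_o-d(o,k)$ and using that in a $\delta$-hyperbolic space $(k\mid z)_o$ oscillates by at most $O(\delta)$ around $(k\mid q)_o$ once $(z\mid q)_o$ is large, one sees directly that the $\limsup$ over all $z\to q$ and the limit along the ray $[\pi_r(q),q)$ agree up to an additive $O(\delta)$. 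With that replacement your bookkeeping of constants -- each error controlled by $\delta$ and the quasigeodesic constant from Corollary~\ref{nppfrominfty}, uniformly in $r,k,q$ -- is sound, and the corollary follows.
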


\noindent {\bf Proof of Proposition \ref{cr}:}
By Corollary \ref{betavsspha} we have,
$$
{\max_{g \in B_r} [exp(v \,  \beta_q \, (o, g.o))]}  \asymp{exp(v \,(r- d_X(\pi_r(q), B_r.o))}
.$$
Since $V_r = exp(vr)$, we have
$$
\frac{1}{V_r}\ {\max_{g \in B_r} [exp(v \,  \beta_q \, (o, g.o))]}
\asymp {exp(v \,(- d_X(\pi_r(q), B_r.o))}
.$$
Hence,
$$C_r 
\asymp  \int_{\L}{exp(v \,(- d_X(\pi_r(q), B_r.o))}d\mu(q)
=  \int_{\L}{\fexr(x)}d\mu(x).$$
This proves the first asymptotic equality of Proposition \ref{cr}.

A standard argument using the Sullivan shadow lemma (see for instance \cite[Proposition 6.1]{coornert-pjm} or \cite[Proposition 3.11]{calegarimaher})  shows that the projection $\pi_r: \L \to \s_r(o)$ and the shadow map from $\s_r(o)$ to $\L$ may be used as approximate inverses of each other for large $r$. Hence, integrals over $\s_r(o)$, equipped with the averaging measure $\mu_r$, converge, up to uniform  multiplicative constants, to the integral  over $(\L,\mu)$. Thus, there exists $R_0 >0$ such that for $r\geq R_0$,
$$C_r \asymp \int_{\s_r(o)}{exp(v \,(- d_X(x, B_r.o))}d\mu_r(x)
=   \int_{\s_r(o)} \fexr  d\mu_r(x),$$
completing the proof of Proposition \ref{cr}.
\hfill $\Box$

\section{Vanishing and non-vanishing ECG}\label{sec-mainthm} 
In this section, we shall prove the main theorems of the paper.
\subsection{Non-vanishing ECG}\label{sec-nvecg}

\begin{theorem}\label{main-nvecg}
	The following triples $(G,X,\L)$ have non-vanishing  extremal cocycle growth:
	\begin{enumerate}
		\item $(X,d)$ is a  proper Gromov-hyperbolic metric space equipped with
		a  properly discontinuous convex cocompact isometric action of a group $G$. The limit set $\L$ of $G$ is equipped with a Patterson-Sullivan measure $\mu$.
		\item $(X,d)$ is a proper complete CAT(-1) space equipped with
		a  properly discontinuous  isometric action of a group $G$ such that $M=X/G$ has non-arithmetic length spectrum. The limit set $\L$ of $G$ is equipped with a Patterson-Sullivan measure $\mu$. Further, assume that for $M=X/G$, the Bowen-Margulis measure $\mubm(UM)$ is finite. 
		\item $(X,d)$ is the Teichm\"uller space $Teich(S)$, $G=MCG(S)$, $\L=\partial X = \PMF(S)$ and $\mu$ is the Thurston conformal density based at  a {\it generic} base-point $o \in X$ (i.e.\ $o$ belongs to a full measure subset of $M=X/G$).
		\item $(X,d)$ is a  symmetric space of non-compact type equipped with
		a  properly discontinuous  isometric action of a lattice $G$.  The limit set $\L$ is  the Furstenberg boundary $\partial_FX$ embedded canonically in $\partial X$ as the $K-$orbit of the barycenter of a Weyl chamber at infinity \cite{albgafa}. The limit set $\L$ of $G$ is equipped with the Patterson-Sullivan measure $\mu$.
	\end{enumerate}
	Hence, in all the above cases, the associated group indexed stationary random fields (via the \rosin representation) $\{Y_g:=Y_g (\L,\mu, \{\phi_g\},  \{c_g\}), g \in G,\}$ is  \iid-like (see Definition \ref{def-iidlike}) with respect to the behavior of partial maxima.
\end{theorem}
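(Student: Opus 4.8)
The plan is to deduce the $\sas$ conclusion from \thmref{main_prob}: by that theorem, the field $\{Y_g\}$ attached via the Rosi\'nski representation to a non-singular conservative action is \iid-like with respect to partial maxima as soon as the action of $G$ on $(\L,\mu)$ has non-vanishing ECG. So it suffices to establish non-vanishing ECG for each of the four triples $(G,X,\L)$, after which the last sentence of the theorem is automatic. In every case I would fix a base-point $o\in X$, set $M=X/G$, $P\colon X\to M$ the quotient map, $y_0=P(o)$, and work with the averaging measures $\{\mu_r\}$ on $\sro$ associated to $\mu$ in \egref{egs}; these are genuine averaging measures by \propref{prop-avm}. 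The common mechanism is the spherical concentration estimate that serves as the hypothesis of \propref{prop-crucialsuffcond}: it is enough to produce constants $c,\alpha>0$ with
$$\mu_r\bigl(\{x\in\sro : d_M(P(x),y_0)\le c\}\bigr)\ge\alpha$$
for all large $r$, since then \corref{cor-fexfexr} together with \eqref{eq:ecg} forces $\liminf_r C_r>0$. Thus the task reduces to producing such constants in each of the four settings, the required geometric input being the equidistribution results of \secref{sec:mix}.

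\noindent\textbf{Cases 1 and 2 (hyperbolic $X$).} In Case~1 I would use convex cocompactness directly: the orbit $G.o$ is coarsely dense in the weak hull $\qcg_o$, on which $\mu_r$ is supported, so there is a uniform $R$ with $d_M(P(x),y_0)=d_X(x,G.o)\le R$ for \emph{every} $x$ in the support of $\mu_r$; the concentration estimate then holds trivially with $\alpha=1$, $c=R$, and \propref{prop-crucialsuffcond} applies. In Case~2 I would instead invoke \propref{cr}, which for Gromov-hyperbolic $X$ identifies $C_r$ up to bounded multiplicative constants with $\int_{\sro}\fexr\,d\mu_r$, and then pass between the superlevel sets of $\fexr$ and $\fex$ via \corref{cor-fexfexr}. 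Here the measures $\mu_r$ of item~(1) of \egref{egs} are the Patterson--Sullivan measures pushed by the geodesic flow, so \thmref{em} --- valid because $\mubm(UM)<\infty$ and the length spectrum is non-arithmetic --- gives that $\mu_r(\{x:d_M(P(x),y_0)\le c\})$ converges to $\mubm(\pi^{-1}(B_M(y_0,c)))/\mubm(UM)$, which exceeds $1/2$ once $c$ is large, because $\mubm$ is finite and these sets exhaust $UM$. Combined with \corref{cor-fexfexr} this bounds $\int_{\sro}\fexr\,d\mu_r$ below by a positive constant, hence $\liminf_r C_r>0$.

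\noindent\textbf{Cases 3 and 4 (Teichm\"uller space and higher rank).} Here the equidistribution statements come in ``first moment'' form. For Case~4, \thmref{hom} (via Kleinbock--Margulis) gives $\int_{\sro} d_M(P(x),y_0)\,d\mu_r(x)\to\int_M d_M(\cdot,y_0)\,d\eta<\infty$ for the Haar-type averaging measures of item~(4) of \egref{egs}; for Case~3, \thmref{teichmix} gives the analogous convergence of $\int_{\sro} d_M(P(x),x_0)\,d\mu_r(x)$ to a finite constant, and it is exactly here that the hypothesis that $o$ be \emph{generic} in \thmref{main-nvecg}(3) enters, since \thmref{teichmix} holds only for a.e.\ footpoint. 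In either case Markov's inequality turns the bounded first moment into $\mu_r(\{x:d_M(P(x),x_0)\le c\})\ge 1/2$ for $c$ large; replacing $x_0$ by $y_0$ at the cost of enlarging $c$ by $d_M(x_0,y_0)$ gives the estimate demanded by \propref{prop-crucialsuffcond}, which then yields non-vanishing ECG.

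\noindent\textbf{Main obstacle.} The genuinely deep inputs are the equidistribution theorems of \secref{sec:mix}, above all \thmref{teichmix} for the mapping class group, which rests on Nevo's pointwise ergodic theorem together with the Masur--Smillie volume estimates and Masur's logarithmic law used to place the distance function in $L^2$. Granting those, the remaining work is bookkeeping: in Case~2 one must check that the $\mu_r$ of item~(1) of \egref{egs} are precisely the measures to which \thmref{em} applies and carefully track the passage from $\fex$ to $\fexr$ through \corref{cor-fexfexr}; in Case~3 one must handle the ``a.e.\ base-point'' caveat, which is the sole role of the word \emph{generic} in the statement. Once non-vanishing ECG is established in all four cases, \thmref{main_prob}(1) and \defref{def-iidlike} yield the asserted \iid-like behaviour of $M_n=\max_{g\in B_n}|Y_g|$, completing the proof.
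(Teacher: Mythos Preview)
Your strategy is exactly the paper's: reduce to \propref{prop-crucialsuffcond} (or, in the hyperbolic cases, to \propref{cr}) and feed in the equidistribution theorems of \secref{sec:mix}, then invoke \thmref{main_prob} for the $\sas$ conclusion. The division of labor you describe --- cocompactness for Item~1, Roblin/Babillot mixing via \thmref{em} for Item~2, \thmref{teichmix} and \thmref{hom} for Items~3,~4 --- matches the paper's case-by-case argument.

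Two small remarks. First, in your Case~1 you conclude by appealing to \propref{prop-crucialsuffcond}, but as stated that proposition covers only Cases~1,~3,~4 of Setup~\ref{setup} (CAT$(-1)$, Teichm\"uller, symmetric), not the general Gromov-hyperbolic Case~2. The paper sidesteps this by going straight to \propref{cr}: once $d_X(x,B_r.o)$ is uniformly bounded on the support of $\mu_r$ (which your coarse-density observation gives), $\fexr$ is bounded below pointwise and $C_r\asymp\int_{\sro}\fexr\,d\mu_r$ is bounded away from zero. Your argument is correct in content; just route the final step through \propref{cr} rather than \propref{prop-crucialsuffcond}. Second, for Items~3 and~4 you are actually more explicit than the paper: the paper simply asserts that ``the proof of Item~(2) goes through mutatis mutandis'' using \thmref{teichmix} and \thmref{hom}, whereas you spell out the Markov-inequality step needed to pass from the first-moment convergence in those theorems to the concentration hypothesis of \propref{prop-crucialsuffcond}. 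That extra care is warranted and correct.
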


\begin{proof} We give a case-by-case argument:\\
	{\bf Item 1:} This will  follow immediately from Proposition \ref{cr} if we can prove that
	$\fexr(x)$ is uniformly bounded below point-wise on $\s_r(o)$ (independent of $r$). Since $\fexr = exp(-vd_X(x,  B_r.o)),$
	the point-wise lower bound on $\fexr$ will follow from a pointwise upper bound on $d_X(x,  B_r.o)$ for $x \in \sro$. But this is an immediate consequence of the fact that $G$ acts on $X$ cocompactly.\\
	{\bf Item 2:} Let $P:(X,o) \to (M,y_0)$ denote the based quotient map. Fix $r > 0$ and let $M_0=\{m \in M\vert d(m,y_0) \leq r\}$. Let $UM_0$ denote the restriction of the bundle $UM$ to $M_0$. Since $\mubm$ is a Borel measure, we can assume that $\mubm (UM_0) >0$. After normalizing $\mubm(UM) = 1$, we therefore assume that $\mubm (UM_0)=\eta >0$.
	
	Let $S_{r,0}= \{ x \in \sro \vert P(x) \in M_0\}$. Also let $\{\mu_r\}$ be the family of averaging measures in Item (1) of \ref{egs}. Equidistribution of the spheres $P(\sro)$ in $M$, with respect to $\{\mu_r\}$   follows from Theorem \ref{em}. Hence
	$\mu_r (S_{r,0}) \to \eta >0$ as $r \to \infty$. Proposition \ref{prop-crucialsuffcond} now gives the result. \\
	{\bf Item 3:} Equidistribution of spheres in the context of $Teich(S)$ is given by Theorem \ref{teichmix}. Thus, the proof of Item (2) goes through mutatis mutandis, using Theorem \ref{teichmix} in place of Theorem \ref{em}.\\
	{\bf Item 4:}  Equidistribution of spheres in the context of symmetric spaces is given by Theorem \ref{hom}. The proof of Item (2) goes through in this case using Theorem \ref{hom} in place of Theorem \ref{em}.
	
	The last statement of Theorem \ref{main-nvecg} now follows from Theorem \ref{main_prob}.
\end{proof}

\begin{rmk}
	An alternate argument for Item 2 above can be given by directly invoking Roblin's Theorem \ref{roblindich} for the asymptotics of $V_G(x,y,n)$ when $\mubm(UM) < \infty$. However
	the proof here  generalizes directly to Items 3, 4.
\end{rmk}

\subsection{Vanishing ECG}\label{sec:ecggiibm} The purpose of this subsection is to prove:

\begin{theorem}\label{ecg-gi}
	Let	$(X,d)$ be a  proper CAT(-1) space equipped with
	a base-point $o$ and	a  properly discontinuous  isometric action of a group $G$. The limit set $\L (\subset \partial X)$ of $G$ is equipped with a Patterson-Sullivan measure $\mu$. Suppose that the associated
	Bowen-Margulis measure $\mubm(UM)$ of the  unit tangent bundle  is infinite. Then 
	the action of $G$ on $(\L,\mu)$ has vanishing extremal cocycle growth.
\end{theorem}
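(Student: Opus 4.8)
The plan is direct and short, since the essential input---Roblin's orbit-counting dichotomy---is already available as Theorem~\ref{roblindich}. I would start from the very definition of the normalized extremal cocycle: by Equations~\eqref{eq-anxi} and \eqref{eq-an}, $\bbar{A_n}=\int_\L\max_{g\in B_n}\frac{dg_\ast\mu}{d\mu}(\xi)\,d\mu(\xi)$, where each $\frac{dg_\ast\mu}{d\mu}$ is a genuine (non-negative) Radon--Nikodym derivative because the $G$-action on $(\L,\mu)$ is measure-class preserving. Since $B_n$ is finite (the action on the proper space $X$ is properly discontinuous), I would simply bound the pointwise maximum by the sum and integrate term by term:
$$\bbar{A_n}\;\le\;\sum_{g\in B_n}\int_\L\frac{dg_\ast\mu}{d\mu}(\xi)\,d\mu(\xi)\;=\;\sum_{g\in B_n}g_\ast\mu(\L)\;=\;|B_n|\,\mu(\L)\;=\;V_G(o,o,n)\,\mu(\L),$$
using $g_\ast\mu(\L)=\mu(g^{-1}\L)=\mu(\L)$ by $G$-invariance of $\L$. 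Dividing by $V_n=e^{vn}$ gives $C_n\le\mu(\L)\,V_G(o,o,n)\,e^{-vn}$. Because $\mubm(UM)=\infty$, alternative~(2) of Theorem~\ref{roblindich} yields $V_G(o,o,n)=o(e^{vn})$, and therefore $C_n\to0$; that is, the action of $G$ on $(\L,\mu)$ has vanishing extremal cocycle growth.

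A second, more geometric route---parallel to the way Section~\ref{sec:ecg-sa} is organized---would instead begin from Proposition~\ref{cr}, which in the $\mathrm{CAT}(-1)$ (hence Gromov-hyperbolic) case gives $C_r\asymp\int_\L\fexr(q)\,d\mu(q)$ with $\fexr(q)=\exp(-v\,d_X(\pi_r(q),B_r.o))\in[e^{-vr},1]$. Fixing $T>0$ and splitting $\L$ according to whether $d_X(\pi_r(q),B_r.o)$ is $\ge T$ or $<T$, the first part contributes at most $e^{-vT}\mu(\L)$, while on the second part the ray $[o,q)$ must enter a ball $B(g.o,T)$ with $g\in B_r$ and $d(o,g.o)>r-T$, so that $\{q:d_X(\pi_r(q),B_r.o)<T\}$ is covered by the shadows $\OO_T(o,g.o)$ over the $\le V_G(o,o,r)$ orbit points $g.o$ in the shell $B(o,r)\setminus B(o,r-T)$; the Sullivan shadow lemma then bounds the second part by $\lesssim_T e^{vT}e^{-vr}V_G(o,o,r)$. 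Letting $r\to\infty$ with $T$ fixed and then $T\to\infty$, and using $V_G(o,o,r)=o(e^{vr})$ once more, gives $C_r\to0$. I expect to present the first argument as the main one and keep this second one only as a remark.

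The whole statement is thus essentially a corollary of Roblin's counting theorem, and the only point that genuinely needs attention is that we are invoking the conclusion $V_G(o,o,n)=o(e^{vn})$ in the stated generality: Theorem~\ref{ecg-gi} carries no non-arithmeticity hypothesis, whereas Theorem~\ref{roblindich} is quoted under it. The resolution I would write down is that, for a proper $\mathrm{CAT}(-1)$ space and a discrete non-elementary isometry group, the little-$o$ estimate in the infinite-Bowen--Margulis-mass alternative holds with no arithmeticity assumption (Roblin, \cite[Chapitre~4]{roblin-memo}); non-arithmeticity is needed only to promote the $\asymp e^{vn}$ bound in the finite-mass alternative to a sharp asymptotic, which plays no role here. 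If one prefers to avoid even this remark, one can simply add ``non-arithmetic length spectrum'' to the hypotheses of Theorem~\ref{ecg-gi}, matching Theorem~\ref{main-nvecg}(2).
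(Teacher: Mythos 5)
Your first argument is correct and considerably more elementary than the proof in the paper, which goes through the averaging-measure machinery of Section~\ref{sec:ecg-sa}. There, the identification $C_r \asymp \int_{\sro}\fexr\,d\mu_r$ from Proposition~\ref{cr} is combined with the quantity $m_r(C)$ (the $\mu_r$-fraction of $\sro$ lying within distance $C$ of the orbit) and Lemma~\ref{discrepancylarge}: since $\fexr\le 1$ everywhere and $\fexr\le e^{-vC}$ off the $C$-neighborhood of the orbit, the integral is bounded by $m_r(C)+e^{-vC}$, and Roblin's alternative~(2) in Theorem~\ref{roblindich} gives $m_r(C)\to 0$. Your second ``geometric'' route is, up to superficial differences in how the shadows are organized, essentially a re-derivation of this argument, and you are right to demote it to a remark. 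Your first route, by contrast, replaces the entire spherical-average and shadow apparatus with a single observation: the pointwise maximum of the finitely many nonnegative Radon--Nikodym derivatives indexed by $B_n$ is dominated by their sum, and each summand integrates to $g_\ast\mu(\L)=\mu(\L)$ by $G$-invariance of the limit set; this collapses the claim directly to $|B_n|=o(V_n)$, which is exactly Roblin's estimate. The longer route in the paper is presumably there because Proposition~\ref{cr} is also the workhorse for the non-vanishing half (Theorem~\ref{main-nvecg}), but for vanishing alone your first argument is shorter and self-contained. You are also correct to flag the non-arithmeticity discrepancy between Theorem~\ref{ecg-gi} and Theorem~\ref{roblindich}: the paper invokes the latter without comment, and the justification or hypothesis-strengthening you propose is the appropriate repair.
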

\begin{proof}
	To prove that extremal cocycle growth is zero, it suffices to show  the following. For any $C>0$, let $G_C$ denote the $C-$neighborhood of the $G.o$orbit in $\qcg_o$.  Let $\sro = \Sigma_r(o,X)$ be the boundary of the $r-$ball  $B(o,r)$ about $o$ in $\qcg_o$. Note that $|\sro| \asymp |B_r| \asymp e^{vr} = V_r.$ Let $\partial_r(C)$ denote $G_C\cap \sro$. Define $$m_r(C):= \frac{\mu_r(\partial_r(C))}{\mu_r(\sro)}.$$ By Roblin's Theorem \ref{roblindich},  $\mubm(UM)=\infty$ implies that  for all $C>0$ $\mu_r(\partial_r(C)) = o(V_r)$. Hence $m_r(C) \to 0$ as $r \to \infty$.  From Lemma \ref{discrepancylarge} below, it follows  that ECG vanishes, i.e.\ $\lim_{r \to \infty} C_r = 0.$
\end{proof}

\begin{lemma}\label{discrepancylarge}
	If $m_r(C) \to 0$ as $r \to \infty$, then  $\lim_{r \to \infty} C_r = 0.$
\end{lemma}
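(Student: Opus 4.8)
The plan is to unwind the definitions relating $C_r$ to the extremal spherical average $\E_r(\fexr)$, and then show that the hypothesis $m_r(C)\to 0$ forces this average to be $o(1)$. By Proposition~\ref{cr}, for $r$ large we have $C_r \asymp \int_{\s_r(o)} \fexr(x)\,d\mu_r(x)$, where $\fexr(x) = \exp(-v\,d_X(\pi_{sr}(x), B_r.o))$ measures the exponential decay in the distance from $x$ (projected to the $r$-sphere) to the orbit points in the $r$-ball. So it suffices to prove $\int_{\s_r(o)} \fexr\,d\mu_r \to 0$.

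The key observation is that $\fexr$ is a function bounded by $1$ which decays exponentially away from the orbit: for any $\ep>0$, choosing $C = C(\ep)$ with $e^{-vC} < \ep$, the set $\{x \in \s_r(o) : \fexr(x) \geq \ep\}$ is contained in the set of points within distance $C$ of $B_r.o$, i.e. in $\partial_r(C) = G_C \cap \s_r(o)$ (up to the uniform additive quasiconvexity/projection constants from Corollary~\ref{nppfrominfty}, which only enlarge $C$ by a bounded amount). Therefore I would split the integral as
\begin{align*}
\int_{\s_r(o)} \fexr\,d\mu_r &= \int_{\{\fexr \geq \ep\}} \fexr\,d\mu_r + \int_{\{\fexr < \ep\}} \fexr\,d\mu_r \\
&\leq \mu_r(\{\fexr \geq \ep\}) + \ep\,\mu_r(\s_r(o)) \\
&\leq \mu_r(\partial_r(C(\ep)) \cdot \text{(const)} + \ep,
\end{align*}
using that $\mu_r$ is a probability measure on $\s_r(o)$. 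Now $\mu_r(\partial_r(C(\ep)))/\mu_r(\s_r(o)) = m_r(C(\ep)) \to 0$ as $r\to\infty$ by hypothesis, so the first term tends to $0$; hence $\limsup_{r\to\infty}\int_{\s_r(o)}\fexr\,d\mu_r \leq \ep$. Since $\ep>0$ is arbitrary, the integral tends to $0$, and therefore $C_r \to 0$, which is exactly vanishing ECG by Definition~\ref{def-ecgps}.

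The main technical point to be careful about — and the only place where there is something to check rather than just estimate — is the bookkeeping of the uniform constants: the asymptotic $C_r \asymp \int \fexr\,d\mu_r$ from Proposition~\ref{cr} carries an unspecified multiplicative constant, and the identification of $\{\fexr \geq \ep\}$ with a bounded neighborhood of the orbit relies on the quasigeodesic/nearest-point-projection estimates of Corollary~\ref{nppfrominfty}, so the threshold $C(\ep)$ must be chosen incorporating both the additive constant there and the $\asymp$-constant from Proposition~\ref{cr}. None of this affects the conclusion since all these constants are independent of $r$, but the proof should state explicitly that $C(\ep)$ depends only on $\ep$, $v$, and the hyperbolicity/quasiconvexity data, not on $r$. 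A secondary point is to confirm that $\mu_r$ being a probability measure (as in Definitions~\ref{def-avgingmreconf} and \ref{def-avgingmreqc}) legitimizes the bound $\int_{\{\fexr < \ep\}}\fexr\,d\mu_r \leq \ep$; this is immediate. With these constants pinned down, the argument is a two-line truncation estimate.
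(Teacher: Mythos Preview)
Your proof is correct and follows essentially the same approach as the paper: reduce to bounding $\int_{\s_r(o)}\fexr\,d\mu_r$ via Proposition~\ref{cr}, then split the sphere into the piece within distance $C$ of the orbit (small $\mu_r$-measure by hypothesis) and its complement (where $\fexr\le e^{-vC}$). The only cosmetic difference is that the paper treats $C$ and $\epsilon$ as two independent parameters and bounds the integral by $e^{-C}+\epsilon$, whereas you fix $\epsilon$ and choose $C=C(\epsilon)$; also, the inclusion $\{\fexr\ge\epsilon\}\subset\partial_r(C(\epsilon))$ for $x\in\s_r(o)$ is immediate from the definition of $\fexr$ without invoking Corollary~\ref{nppfrominfty}, so that step can be simplified.
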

\begin{proof} Let $\fexr$ be as in Proposition \ref{cr}. Since $m_r(C) \to 0$ as $r \to \infty$, it follows that for all $\epsilon >0$, there exists $N \in \natls$ such that for all $r \geq N$,
	$m_r(C) <\epsilon$,
	and $\fexr (x) \leq e^{-C} $ for all $x \in (\sro \setminus \partial_r(C))$. Hence by Proposition \ref{cr}, $$C_r \asymp \int_\sro  \fexr (x) d\mu_r(x) \leq e^{-C} + \epsilon.$$
	Since   $\epsilon$ can be made arbitrarily small and $ C$  arbitrarily large,  $\lim_{r \to \infty} C_r=0$.
\end{proof}

\subsection{Normal subgroups of hyperbolic groups}\label{sec:ecgnormal} When $X$ is the Cayley graph of a  free group with respect to a standard set of generators, Item 2 of  Theorem \ref{main-nvecg} does not apply as the geodesic flow is not mixing in this case (mixing fails more generally a hyperbolic group equipped with the word metric \cite{bader-furman}).
We deal in this section with subgroups $H$ of hyperbolic groups $G$, especially
when $H$ is  normal. For the purposes of this subsection,  $X=\Gamma$ will be a Cayley graph of $G$ with respect to a finite set of generators.
It follows immediately from  Theorem \ref{main-nvecg} that if $H$ is a finite index subgroup of $G$, then the action of  $H$  on $(\partial G, \mups)$ has  non-vanishing extremal cocycle growth.
Assume henceforth that  $H$ is an infinite index subgroup of $G$. $H$ is said to be {\bf co-amenable} in $G$ if the left action of $G$ on the (right) coset space $\Gamma/H$ is amenable. We shall use:

\begin{theorem}\cite{cds} \label{cds}
	For $G, H, X (=\Gamma)$ as above, let $v_G$ and $v_H$ denote the exponential growth rates of $G$ and $H$
	acting on $X$. Then  $H$ is co-amenable in $G$ if and only if $v_H=v_G$.
\end{theorem}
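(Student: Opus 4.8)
The inequality $v_H\le v_G$ is immediate, since $\{h\in H:|h|\le n\}\subseteq\{g\in G:|g|\le n\}$ for the word metric on $X=\Gamma$; so the content of \thmref{cds} is the reverse inequality when $H$ is co-amenable, together with its converse. The plan is to route everything through the quasi-regular representation $\pi=\lambda_{G/H}$ of $G$ on $\ell^2(G/H)$ and the simple random walk on the Schreier graph $\mathrm{Sch}(G,H,S)$ of $G$ on the coset space, where $S$ is the finite symmetric generating set defining $X$. First I would record the standard dictionary: $H$ is co-amenable in $G$ if and only if $\pi$ weakly contains the trivial representation, if and only if the Markov operator $P=\frac{1}{|S|}\sum_{s\in S}\pi(s)$ on $\ell^2(G/H)$ has norm $1$, if and only if the spectral radius $\rho(G/H)$ of the simple random walk on $\mathrm{Sch}(G,H,S)$ equals $1$. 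This reduces the theorem to the assertion $\rho(G/H)=1\iff v_H=v_G$, which is the hyperbolic-group analogue of the Grigorchuk--Cohen cogrowth criterion for free groups.

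Next I would treat the direction ``co-amenable $\Rightarrow v_H=v_G$''. Given $\epsilon>0$, co-amenability produces a finite F{\o}lner set $F\subseteq G/H$ with $|sF\triangle F|<\epsilon|F|$ for every $s\in S$; fix coset representatives $\{w_c:c\in F\}$ of length at most $L=L(F)$. The map sending a pair $(g,c)$ with $g\in B_n$, $c\in F$, and $g\cdot c\in F$ to $w_{g\cdot c}^{-1}\,g\,w_c\in H\cap B_{n+2L}$ is, after a bookkeeping argument controlling its multiplicity by means of the F{\o}lner condition and the fact that translates of $H$ have the same exponential growth rate as $H$ itself, surjective onto a large enough subset to yield $\#(H\cap B_{n+2L})\ge c(F)\,e^{-n\epsilon}\,\#(G\cap B_n)$; hence $v_H\ge v_G-O(\epsilon)$, and letting $\epsilon\to 0$ forces $v_H=v_G$. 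Alternatively, one can feed the almost-invariant vectors for $\pi$ directly into a Kesten-type estimate that bounds the exponential decay rate of $H$-return probabilities, hence $v_H$, from below by a quantity tending to $v_G$ as $\rho(G/H)\to1$.

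The genuinely hard direction is the converse --- the growth-gap statement that if $G\curvearrowright G/H$ is non-amenable, so $\rho(G/H)<1$, then $v_H<v_G$ strictly. For free groups this is immediate from the exact Grigorchuk--Cohen cogrowth formula, but the Cayley graph of a hyperbolic group is not a tree, so that clean combinatorics is unavailable and I would instead invoke the ergodic theory of the $H$-action on the Gromov boundary $\partial G$. Concretely: build a Patterson--Sullivan type conformal density for the $H$-action on $X$ supported on the limit set of $H$; establish, via the Sullivan shadow lemma on the quotient $X/H$, that its dimension equals $v_H$; and then convert the spectral gap on $\ell^2(G/H)$ into an explicit exponential deficit in the orbital count $\#(H\cap B_n)$ relative to $\#(G\cap B_n)$, forcing $v_H$ strictly below $v_G$. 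I expect this last transfer --- turning a spectral gap for the quasi-regular representation into a strict drop of the critical exponent on a non-tree hyperbolic space --- to be the main obstacle; it is the technical core of \cite{cds}, it uses the $\delta$-hyperbolicity of $X$ in an essential way (thin triangles, the Sullivan shadow lemma, and quasiconvexity of metric balls, much as in the proof of \propref{cr}), and it cannot be deduced from soft amenability arguments alone.
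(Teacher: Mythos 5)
The paper does not prove \thmref{cds}: it is quoted from \cite{cds} and used as a black box in Proposition~\ref{ecg-hypgp-nonamen}, so there is no in-paper argument to compare your sketch against. What you have written is a plausible road map, consistent in architecture with the published proof (quasi-regular representation, Kesten-type criterion on the Schreier graph, Patterson--Sullivan machinery on $\partial G$), but it is not a proof, because you explicitly defer the essential step: you describe ``converting the spectral gap on $\ell^2(G/H)$ into an explicit exponential deficit in the orbital count'' and then declare it ``the technical core of \cite{cds}'' without supplying it. That step is exactly the growth-gap inequality $v_H<v_G$ for non-co-amenable $H$, so the harder implication of the equivalence is simply unaddressed.

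The easier implication as you sketch it also has a gap. The F{\o}lner condition $|sF\triangle F|<\epsilon|F|$ controls the single-step loss for the \emph{uniform} distribution on $F$, but after one application of a generator the pushforward is no longer uniform on $F$, so the bound does not naively compound to a $(1-\epsilon)^n$-fraction of pairs $(g,c)$ with $g\in B_n$, $c\in F$, $g\cdot c\in F$; the claimed estimate $\#(H\cap B_{n+2L})\ge c(F)\,e^{-n\epsilon}\,\#(G\cap B_n)$ therefore does not follow from the bookkeeping you describe. The operator-theoretic route you mention in passing is the clean way to make this direction rigorous: take an almost-invariant unit vector $\phi\in\ell^2(G/H)$ witnessing $\|P\|=1$, bound $\sum_{g\in B_n}\langle\pi(g)\phi,\phi\rangle$ from below, and compare against $\#(H\cap B_{n+O(1)})$ via the matrix coefficients of the quasi-regular representation. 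As written, though, your proposal is a correct outline of the structure of the result rather than a proof of either direction.
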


Let $V_H(1,1,n)$ be the growth series of $H$ acting on $\Gamma$.
We now observe:
\begin{prop}\label{ecg-hypgp-nonamen}
	Let $G$ be a  hyperbolic group and $H$ a subgroup so that the left $G-$action on the (right) coset space $X/H$ is
	non-amenable (in particular  when $H$ is normal,  the quotient group $G/H$ is non-amenable).  Then ECG for the $H-$action on the boundary $\partial G$ of $G$, equipped with its Patterson-Sullivan measure vanishes.
	If $v_H=v_G$, $M=X/H$  and   the Poincar\'e series for the $H-$action on $\Gamma$ converges at $v_H$, then  ECG for the $H-$action on its limit set $\L_H \subset \partial G$ vanishes.
\end{prop}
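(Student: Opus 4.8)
The plan is to reduce both assertions to the discrepancy criterion of Lemma \ref{discrepancylarge} (in the form used in the proof of Theorem \ref{ecg-gi}), namely that $C_r \to 0$ as soon as $V_H(1,1,n) = o(V_n) = o(e^{v_G n})$, together with the asymptotic identity $C_r \asymp \int_{\L} \fexr\, d\mu$ from Proposition \ref{cr}. Indeed, once we know that the number of $H$-orbit points in the $n$-ball of $\Gamma$ is subexponential relative to the critical exponent $v_G$ that governs $V_n = e^{v_G n}$, the argument of Lemma \ref{discrepancylarge} applies verbatim: for any $C>0$ the set of $x \in \sro$ with $d(x, B_r^H . o) \le C$ has $\mu_r$-measure of order $V_H(1,1,r+C)/V_r \to 0$, and on the complement $\fexr(x) \le e^{-vC}$, so $C_r \asymp \int_\sro \fexr\, d\mu_r \le e^{-vC} + o(1)$, and letting $C \to \infty$ gives $C_r \to 0$. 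Here $\mu$ is the Patterson–Sullivan measure of the full group $G$ on $\partial G$ (dimension $v = v_G$), and $B_r^H = \{h \in H : h.o \in B(o,r)\}$.

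The first assertion is then immediate from the Corlette–Deraux–Schlichenmaier style result (Theorem \ref{cds}): if the left $G$-action on $X/H$ is non-amenable, then $v_H < v_G$ strictly, so $V_H(1,1,n) \qle e^{v_H n} = o(e^{v_G n}) = o(V_n)$ (using, e.g., Coornaert's estimate $v_H = \limsup_n \tfrac1n \log |B_n^H|$ from Theorem \ref{coornaert} applied to the $H$-action, which gives at most exponential growth with rate $v_H$). Feeding this into the paragraph above yields vanishing ECG for the $H$-action on $(\partial G, \mu^{PS})$. Note that when $H \trianglelefteq G$, non-amenability of the coset action is exactly non-amenability of the quotient group $G/H$, which is the parenthetical remark.

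For the second assertion we are in the critical case $v_H = v_G =: v$, so the crude bound $V_H(1,1,n) \qle e^{vn}$ is no longer $o(V_n)$ and we must use the extra hypothesis that the Poincaré series $P_s^H(1,1) = \sum_{h \in H} e^{-s\,d(1,h)}$ converges at $s = v$. Here the reference measure and reference critical exponent are those of the subgroup: $\mu = \mu^{PS}_H$ is a Patterson–Sullivan density of dimension $v_H = v$ on the limit set $\L_H \subset \partial G$, and $V_n = e^{vn}$. Convergence of $P_v^H(1,1)$ forces $V_H(1,1,n)\, e^{-vn} \to 0$: this is a standard Abel/Kronecker summation argument — if $V_H(1,1,n) \ge c\, e^{vn}$ along a subsequence then the tail $\sum_{d(1,h) > n} e^{-v\,d(1,h)}$ cannot tend to $0$, contradicting convergence. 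Hence again $V_H(1,1,n) = o(V_n)$, and the discrepancy argument of Lemma \ref{discrepancylarge} applies with the measures $\mu_r$ being the averaging measures on $\sro$ associated (via Example \ref{egs}(2) / Proposition \ref{prop-avm}) to $\mu^{PS}_H$ on $\L_H$; note Proposition \ref{cr} holds in Case 2 of Setup \ref{setup} precisely for this hyperbolic-group-with-word-metric situation.

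The main obstacle is making rigorous the counting step in the critical case — that convergence of the $H$-Poincaré series at $v$ genuinely upgrades to $V_H(1,1,n) = o(e^{vn})$ and, more importantly, that Proposition \ref{cr} (whose proof invokes the Sullivan shadow lemma and quasiconvexity of balls) carries over when the boundary measure is the subgroup's Patterson–Sullivan measure supported on the possibly-thin limit set $\L_H$ rather than the full $\partial G$; one needs the shadow lemma for the $H$-density and the fact that the $H$-averaging measures $\mu_r$ do equidistribute to $\mu^{PS}_H$ in the appropriate coarse sense. The amenable-coset case ($v_H = v_G$ but the series diverges) is deliberately excluded here and is exactly where the weaker counting technique for normal subgroups, used later for Theorem \ref{ecg-hypgp-normal}, is required.
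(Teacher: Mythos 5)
Your proposal is correct and follows essentially the same route as the paper: reduce both claims to the counting estimate $V_H(1,1,n)=o(e^{vn})$ — via Theorem \ref{cds} in the non-amenable case, and via convergence of the Poincar\'e series at $v_H=v_G$ in the critical case — and then feed this into the discrepancy mechanism of Lemma \ref{discrepancylarge} and Proposition \ref{cr} from the proof of Theorem \ref{ecg-gi}. The one place you add genuine content is the Abel/Kronecker summation argument upgrading convergence of the $H$-Poincar\'e series at $v$ to $V_H(1,1,n)=o(e^{vn})$, which the paper asserts without proof; your worries about transporting Proposition \ref{cr} to the $H$-action are legitimate but in fact unproblematic, since for Part (2) the triple $(H,\Gamma,\L_H)$ sits squarely in Case 2 of Setup \ref{setup}, while for Part (1) the Sullivan shadow lemma is applied to the $G$-density $\mu^{PS}_G$ on the full boundary (not the $H$-density), with only the orbit ball $B_r$ replaced by $B_r^H$ in the formula for $C_r$.
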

\begin{proof}
	It follows immediately from Theorem \ref{cds} that when the left $G-$action on the (right) coset space $G/H$ is
	non-amenable, then  the critical exponent $v_H$ of $H$ is strictly less than  the critical exponent $v_G$ of $G$.  Then $V_H(1,1,n)=o(exp(v_Gn))$.
	The proof of Theorem \ref{ecg-gi} now shows that ECG is vanishing in this case. 
	
	When the Poincar\'e series for the $H-$action on $\Gamma$ converges at $v_H=v_G$, then also, $V_H(1,1,n)=o(exp(v_Gn))$. The proof of Theorem \ref{ecg-gi} again shows that ECG is vanishing in this case. 
\end{proof}

We finally come to:

\begin{theorem}\label{ecg-hypgp-normal}
	Let $G$ be a  hyperbolic group and $H$ an infinite normal subgroup of infinite index. Let $X=\Gamma$ denote a Cayley graph of $G$ with respect to a finite generating set. Let $\mups$ denote the Patterson-Sullivan measure of $H$ on the limit set $\L_H = \partial G$.  Then the ECG for the triple $(X, \L_H,H)$ vanishes.
\end{theorem}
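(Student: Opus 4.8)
The plan is to turn $C_r$ into a boundary integral via Proposition~\ref{cr}, to reinterpret the integrand in terms of the quotient \emph{group} $G/H$ (this is where normality is used), and then to split into the amenable and non-amenable cases by Theorem~\ref{cds}. First, $\L_H=\partial G$: since $H\triangleleft G$ the set $\L_H$ is closed and $G$-invariant, it is nonempty because $H$ is infinite, and the $G$-action on $\partial G$ is minimal, so $\L_H=\partial G$. Thus $\mups$ is a $v_H$-dimensional quasiconformal density supported on all of $\partial G$ and $V_n=e^{v_Hn}$. Since $H$ is normal, $\Gamma/H$ is the Cayley graph of $G/H$, so for any vertex $w\in\Gamma$ one has $d_\Gamma(w,H)=|\overline w|_{G/H}$, the word length of the image of $w$; in particular, as $B_r^{H}\cdot o\subseteq H$ (with $o=e$),
\[
\fexr(q)\ =\ \exp\!\big(-v_H\, d_\Gamma(\pi_r(q),B_r^{H}\cdot o)\big)\ \le\ \exp\!\big(-v_H\,|\overline{\pi_r(q)}|_{G/H}\big).
\]
Hence, by Proposition~\ref{cr} (which applies: this is Case~2 of Setup~\ref{setup}, applied with $H$ acting on $X=\Gamma$),
\[
C_r\ \lesssim\ \int_{\partial G}\exp\!\big(-v_H\,|\overline{\pi_r(q)}|_{G/H}\big)\,d\mups(q),
\]
so it suffices to show that for every fixed $C>0$ the set $E_{r,C}:=\{\,q\in\partial G:\ |\overline{\pi_r(q)}|_{G/H}\le C\,\}$ satisfies $\mups(E_{r,C})\to0$ as $r\to\infty$: splitting the last integral over $E_{r,C}$ and its complement, exactly as in the proof of Lemma~\ref{discrepancylarge}, then gives $\limsup_r C_r\le e^{-v_HC}$ for all $C$, whence $C_r\to0$.

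By Theorem~\ref{cds}, $v_H=v_G$ iff $G/H$ is amenable. \textbf{Non-amenable case} ($v_H<v_G$): here the trajectory $r\mapsto\overline{\pi_r(q)}$ in $G/H$ escapes linearly, i.e.\ $r^{-1}|\overline{\pi_r(q)}|_{G/H}\to\ell>0$ for $\mups$-a.e.\ $q$. I would obtain this by applying Kingman's subadditive ergodic theorem to the subadditive cocycle $(g,\xi)\mapsto|\overline g|_{G/H}$ over the Bowen--Margulis flow attached to the $H$-action on $\Gamma$, which is conservative and ergodic because $H$, being an infinite-index normal subgroup of the non-elementary hyperbolic group $G$ with $v_H<v_G$, is of divergent type; positivity of the drift $\ell$ is precisely the non-amenability of $G/H$. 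Linear escape immediately yields $\mups(E_{r,C})\to0$. \textbf{Amenable case} ($v_H=v_G$): there is no linear escape, and the crucial input is the growth estimate $V_H(1,1,n)=o(e^{v_Hn})$ for infinite normal subgroups with amenable quotient — a Kesten-type statement (the return probability to $H$ of the induced walk on the infinite amenable group $G/H$ decays sub-exponentially but still to zero), which is the ``weaker counting technique'' of the introduction and plays the role here that $\mubm(UM)=\infty$ plays in Theorem~\ref{ecg-gi}. Granting it, the Sullivan shadow lemma for the $v_H$-dimensional density $\mups$ gives
\[
\mups(E_{r,C})\ \le\ \sum_{h\in H,\ |h|_\Gamma\le r+C}\mups\big(\text{shadow of }B(h,C)\big)\ \lesssim_C\ V_H(1,1,r+C)\,e^{-v_Hr}\ \longrightarrow\ 0,
\]
the union bound being harmless exactly because $v_H$ equals the full exponent $v_G$. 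Combining the two cases gives $\mups(E_{r,C})\to0$ for every $C$, and therefore the ECG for $(X,\L_H,H)$ vanishes.

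The step I expect to be the main obstacle is the growth estimate $V_H(1,1,n)=o(e^{v_Hn})$ in the amenable case: for an arbitrary infinite-index normal subgroup with amenable quotient there is no spectral gap, so one must argue directly from Kesten's characterisation of amenability (spectral radius one) together with the elementary fact that a symmetric random walk on any infinite group has return probabilities tending to zero. The non-amenable case is comparatively routine once divergence type — hence conservativity and ergodicity of the Bowen--Margulis flow of the $H$-action — is in hand. A minor technical point throughout is the uniform-in-$r$ control of $d_\Gamma(\pi_r(q),B_r^{H}\cdot o)$ by $|\overline{\pi_r(q)}|_{G/H}$, resting on the nearest-point-projection estimates (Lemmas~\ref{npp}, \ref{betavsspha}) and on normality; but only the inequality ``$\ge$'' is needed above, and that is immediate.
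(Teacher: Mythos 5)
You identify the right reduction---it suffices to prove $V_H(1,1,n)=o(e^{v_Hn})$, which then forces $C_r\to0$ via Proposition~\ref{cr} and the argument of Lemma~\ref{discrepancylarge}---and the correct amenable/non-amenable split via Theorem~\ref{cds}, but your route to the counting estimate differs from the paper's and, in the amenable case, has a gap that you yourself flag as the main obstacle. The paper first dispatches the non-amenable case, $v_H<v_G$, by Proposition~\ref{ecg-hypgp-nonamen}: Theorem~\ref{cds} already gives $V_H(1,1,n)=o(e^{v_Gn})$, with no need for Kingman's theorem or a drift computation. Having reduced to $v_H=v_G=v$ with divergent Poincar\'e series for $H$, the paper then invokes the Matsuzaki--Yabuki--Jaerisch theorem (\cite[Theorem~4.2]{matsuzaki}, \cite[Theorem~1.2]{matsuzaki2}): for a normal subgroup $H$ of divergence type, the Patterson--Sullivan measure $\mups$ of $H$ is $G$-quasi-invariant with uniformly bounded Radon--Nikodym derivatives. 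This yields the uniform comparison $V_H(1,g,m)\asymp V_H(1,1,m)$ for all $g\in G$; packing $2n$ elements lying in distinct $H$-cosets then gives
\[
n\, V_H(1,1,m)\ \lesssim\ \sum_{i=1}^n \bigl(V_H(1,g_i,m)+V_H(1,g_i^{-1},m)\bigr)\ \le\ V_G(1,1,m)\ \asymp\ e^{vm},
\]
and letting $n\to\infty$ gives $V_H(1,1,m)=o(e^{vm})$.

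This $G$-quasi-invariance of $\mups$ is the ingredient missing from your argument. Kesten's theorem (spectral radius one of the induced walk on the amenable quotient $G/H$) and the decay to zero of its return probabilities are statements about a symmetric random walk, and there is no direct passage from these probabilistic facts to a bound on the orbit-counting function $V_H(1,1,n)$ inside word-metric balls. Bridging the two would require comparing the harmonic measure to the Patterson--Sullivan measure (or a local limit theorem in the word metric), neither of which you supply; the commented-out Sullivan--Rees--Varopoulos route in the source is exactly such a bridge and was not the one taken. The MYJ quasi-invariance theorem is what replaces that translation by making the counting argument purely geometric; without it, the crucial amenable case in your proposal is not established.
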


\begin{proof}
	It suffices,  by Proposition \ref{ecg-hypgp-nonamen}, to assume that $v_H=v_G=v$ and that the Poincar\'e series of $H$ diverges at $v$. Further,
	as in the proof of Theorem \ref{ecg-gi}, it is enough to show that 
	$V_H(1,1,m)=o(exp\, (vm))$.

	We now invoke a Theorem due to Matsuzaki,  Yabuki and Jaerisch
	\cite[Theorem 4.2]{matsuzaki}, \cite[Theorem 1.2]{matsuzaki2} that ensures that the Patterson-Sullivan measure $\mups$ of $H$ is, up to uniformly bounded multiplicative constants, invariant under the action of $G$. We normalize so that $\mups$ is constructed with base-point $1\in H$. Thus, for all $g \in G$, $$g^\ast \mups \asymp \mups_{g^{-1}},$$ where the suffix ${g^{-1}}$ indicates the shifted base-point. Hence, for all
	$q \in \L_H = \partial G$, $$g^\ast \mups (q) + (g^{-1})^\ast \mups (q) \asymp \mups(q), $$ where $\asymp$ indicates uniform multiplicative constants independent of $g$. 
	Since $G/H$ is infinite, we can choose distinct $g_1, g_1^{-1} \cdots , g_n, g_n^{-1} $ such that for all
	$q \in \L_H = \partial G$, $$\sum_1^n [g_i^\ast \mups (q) + (g_i^{-1})^\ast  \mups (q) ] \asymp n \mups(q).$$
	
	As usual, let $B(1,m)$ denote the $m-$ball in $X=\Gamma$.
	It follows that for any  distinct $g_1, g_1^{-1} \cdots, g_n , g_n^{-1} $, there exists $N$ such that for  $m \geq N$, 
	$$\sum_1^n (V_H(1,g_i,m) + V_H(1,g_i^{-1},m)) \asymp n \, V_H(1,1,m).$$
	Since $$\sum_1^n (V_H(1,g_i,m) + V_H(1,g_i^{-1},m)) \leq V_G(1,1,m)
	\asymp exp (vm),$$ it follows that for  $m\geq N$, 
	$$V_H(1,1,m) \lesssim \frac{1}{n} \, exp (vm).$$ Since $n$ can be made arbitrarily large, $V_H(1,1,m)=o(exp\, (vm))$ as required.
\end{proof}

\medskip

\noindent {\bf Concluding Remarks:}\\ 

\smallskip

\noindent (1) Replacing Albuquerque's results \cite{albgafa} in Section \ref{sec:psmss} by a Theorem of Link \cite[Theorem A]{link-gd} gives immediately an analog of Theorem \ref{main-nvecg} Item (4) for lattices in products
of negatively curved manifolds.
\\

\noindent (2) The proof of Item 1 of
Theorem \ref{main-nvecg} goes through without modification when $G$ acts cocompactly on $(X,d)$ when the latter is only quasi-ruled in the sense of
\cite[Section 1.7]{bhm} instead of being a geodesic metric space. Thus, let $X = \Gamma (G,S)$ where $G$ is hyperbolic and $S$ is a finite generating set.
Let $\mu$ be a finitely supported symmetric measure on $G$ whose support generates $G$. Let $\nu$ be the hitting measure on $\partial G$. Let $d$ be the {\bf Green metric}  on  $X$ \cite{bhm} and let $\Lambda = (\partial G, \nu)$. Then proof of Item 1 of
Theorem \ref{main-nvecg} goes through and shows that the action of $G$ on $\Lambda$ has non-vanishing ECG. Hence by Theorem \ref{main_prob}, the behavior of partial maxima is \iid-like. Note that in this case, the Busemann function is computed with respect to the Green metric rather than the word metric \cite{bhm}.\\

\noindent (3) An exact analog of Ricks' theorem \cite{ricks} on mixing and convergence of spherical averages \ref{em} is absent at this point for general Gromov-hyperbolic spaces. This is the only obstruction in obtaining an exact analog of Theorem \ref{main-nvecg} Item (2) for general Gromov-hyperbolic spaces.\\

\noindent (4) {\bf Dependence on $\alpha$:} For a stationary $\sas$ random field $Y_g = Y_g(S, \mu, \phi_g,c_g,f)$, ECG (Definition \ref{def-ecgps}) identifies the qualitative behavior of partial maxima when $\mu$ is a probability measure and $f$ is a constant function. Note that this qualitative behavior (of being \iid-like) is independent of $\alpha$. Thus, to determine the dependence of partial maxima on $\alpha$, we really need to investigate the general case of non-constant $f$.\\

\noindent (5) It might be worthwhile to extract  axiomatically the essential features  from all the examples  of non-vanishing ECG in Theorem \ref{main-nvecg} to provide a general sufficient condition.\\

\noindent (6) It was kindly pointed out to us by the referee that the main result of the paper \cite{cdst} by Coulon-Dougall-Schapira-Tapie generalizes Theorem \ref{cds} as follows. In Theorem \ref{cds}, $X$ is assumed to be a Cayley graph of $G$. However, \cite{cdst} allows Theorem \ref{cds} to go through when
	$G$ acts cocompactly on $X$, or more generally when $X/G$ has finite Bowen-Margulis measure. This allows all the results for subgroups of hyperbolic groups  to go through in this more general context.\\

We hope to take up some of the unexplored issues in the above list in subsequent work.\\

\noindent{\bf Acknowledgments:} The  authors would like to thank Uri Bader,  Remi Coulon, Alex Furman, Anish Ghosh, Amos Nevo, Michah Sageev, Gennady Samorodnitsky and Sourav Sarkar for extremely helpful conversations, and   Apoorva Khare, Soumik Pal and D. Yogeshwaran for their comments on an earlier version. Parts of this work were completed during visits to International Centre for Theoretical Sciences, Bengaluru (Discussion Meeting on Surface Group Representations and Projective Structures, December 2018), Technion, Israel Institute of Technology (May 2018),   Fields Institute, University of Toronto (October, 2018),  Tata Institute of Fundamental Research, Mumbai (April 2018), and the Mathematical Sciences Research Institute (March 2022). We thank these institutions for their hospitality. Finally, we thank the anonymous referee for helpful comments.

\end{document}